\tikzset{%
  symbol/.style={
    draw=none,
    every to/.append style={
      edge node={node [sloped, allow upside down, auto=false]{$#1$}}
    },
  },
}
\numberwithin{thm}{subsection}
\numberwithin{equation}{subsection}
\title{Quantum Galois groups of subfactors}
\author[Bhattacharjee]{Suvrajit Bhattacharjee}
\address{Stat-Math Unit\\ Indian Statistical Institute\\ 203, B.T. Road\\ Kolkata-700108}
\email{suvra.bh@gmail.com}
\author[Chirvasitu]{Alexandru Chirvasitu}
\address{Department of Mathematics, University at Buffalo, Buffalo, NY 14260-2900, USA}
\email{achirvas@buffalo.edu}
\author[Goswami]{Debashish Goswami}
\address{Stat-Math Unit\\ Indian Statistical Institute\\ 203, B.T. Road\\ Kolkata-700108}
\email{goswamid@isical.ac.in}
\dedicatory{Dedicated to the memory of Prof. V.F.R. Jones}
\begin{document}

\begin{abstract}
  For a finite-index $\mathrm{II}_1$ subfactor $N \subset M$, we prove the existence of a universal Hopf $\ast$-algebra (or, a discrete quantum group in the analytic language) acting on $M$ in a trace-preserving fashion and fixing $N$ pointwise. We call this Hopf $\ast$-algebra the quantum Galois group for the subfactor and compute it in some examples of interest, notably for arbitrary irreducible finite-index depth-two subfactors. Along the way, we prove the existence of universal acting Hopf algebras for more general structures (tensors in enriched categories), in the spirit of recent work by Agore, Gordienko and Vercruysse.
\end{abstract}

\subjclass{16T20; 46L37; 16T15; 18D20}

\keywords{subfactor; quantum group; Hopf algebra; coalgebra; Galois; depth; enriched category}

\maketitle



\section*{Introduction}
The theory of subfactors is one of the cornerstones of the modern theory of operator algebras. Since Jones' beautiful and path-breaking discovery of a deep connection between subfactors and knot theory \cites{j-knts1,j-knts2}, there have been numerous applications of subfactors to diverse fields of mathematics and beyond (e.g. physics). Early on (e.g. \cite{ocneanu}), it was recognized that plain group theory is not sufficient to appropriately capture the symmetry of a subfactor. Additionally, the links with Hopf-algebra/quantum-group theory was conspicuous in the construction of several classes of subfactors from Hopf (or, more generally, weak Hopf) algebra (co)actions. As an example, one may recall the characterizations of depth-two finite index subfactors in terms of weak Hopf algebra actions (see for instance, \cite{nvcharacteri}).

The similarity between finite field extensions and finite-index subfactors suggests a Galois-theoretic approach to symmetry. Intense activity along these lines, following Ocneanu's pioneering work \cite{ocneanu}, has resulted in several thematically entwined constructions and insights. For a (necessarily incomplete) sampling, we mention
\begin{itemize}
\item Hayashi's face algebras and his version of Galois theory \cite{hayashiface},
\item Izumi-Longo-Popa's compact group version \cite{izumilongopopa}, or
\item Nikshych-Vainerman's weak Hopf algebra version \cite{nvgalois}.
\end{itemize}
In the purely algebraic context, Chase-Harrison-Sweedler's and Chase-Sweedler's study of Galois theory for commutative rings led to the notion of a Hopf-Galois extension (equivalent, in modern form, to the notion of a Jones triple \cite{nsw}). In fact, Sweedler's study of ``groups of algebras'' and that of Takeuchi's in the noncommutative setting led to the notion of a Hopf algebroid \cite{schauenberg} and finally to \cite{kadison}. The Galois-theoretic character of the crossed product construction, which is one of our main examples, is in fact a special case of a more general characterization due to Doi-Takeuchi of Hopf-Galois extensions with ``normal basis property'' (\cite[Theorem 9]{dt} or \cite[Theorem 8.2.4]{montgomery}).

A related approach to studying the symmetries of a subfactor $N \subset M$, also initiated in \cite{ocneanu} and explored in detail in subsequent work \cites{kawaquantgal, ekasymp, kawaflat} (see also the exposition in \cite[Chapters 9, 10 and 11]{ekquantsymm}), is to regard that symmetry as embodied in the four bimodule categories $\mathrm{Bimod}_{M-M}$, $\mathrm{Bimod}_{M-N}$, $\mathrm{Bimod}_{N-M}$ and $\mathrm{Bimod}_{N-N}$. This viewpoint relates back to the preceding remarks: such categories of bimodules are intimately related to Hopf algebroids \cite{schauenberg} via the Tannaka reconstruction theorem \cite{tannakaalgebrd}. For instance, the bimodule category $\mathrm{Bimod}_{N-N}$ with the obvious fibre functor should correspond to a Hopf algebroid over $N$ which in some sense is a universal symmetry object of the subfactor. This viewpoint is closer in spirit to Grothendieck's version of Galois theory, motivated by the theory of covering spaces and fundamental groups.

As Hopf algebroids and their actions/fixed point subalgebras are somewhat technical and not as familiar a subject as Hopf algebras and their representations/actions, we believe that it is perhaps useful to formulate a notion of subfactor symmetry in terms of Hopf-algebra actions. We do this below, describing these notions of symmetry in terms of universal properties in the spirit of Wang's work on compact quantum groups \cite{wangqsymm}. We defer introducing the relevant notation and definitions, but summarize the main results here. First, we prove that the requisite universal objects always exist (see Theorem \ref{th:ex}):

\begin{thmu}
  Let $N\subset M$ be an inclusion of finite factors, and equip $M$ with its canonical tracial state $\tau$. Then, there exist Hopf $*$-algebras
  \begin{itemize}
  \item $\QGal(N\subset M)$, acting on $M$ universally so as to fix $N$ pointwise, and
  \item $\QGal_{\t}(N\subset M)$, acting as above with the additional constraint that the action preserve the trace $\tau$.
  \end{itemize}
\end{thmu}

As for more explicit descriptions for Hopf algebras, we have 

\begin{thmu}
  Let $N\subset M$ be a finite-index subfactor. Then, $\QGal_{\t}(N\subset M)$ consists of those elements $h \in Q^*_{aut}(\en({}_N M_N),\t_1)$ such that \[h \cdot (xy)=(h_1\cdot x)(h_2\cdot y)\] for all $x,y \in M$.
\end{thmu}

For irreducible depth-2, finite-index, type-$\2$ subfactors we can give a fuller picture:

\begin{thmu}
  Let $M$ be a finite factor admitting an outer action by a finite dimensional Hopf $C^*$-algebra $H$. Then $\QGal(M\subset M\rtimes H)=H^*$.
\end{thmu}



\subsection*{Acknowledgments}
The authors are grateful to Prof. Y. Kawahigashi for pointing out the reference \cite{nvsurvey} to us, to Prof. Z. Liu for bringing to our attention \cite{liu} and its potential connections to the present work (we comment on this in more detail in Subsection \ref{subse:inv} below.). We also thank Prof. D. Penneys for pointing us to \cites{jp0,jp1}. And finally, the authors would like to thank the referee for her/his careful reading, suggestions for improvements and for the reference \cite{hopftensor}.

The first author is grateful to the third author and Indian Statistical Institute for offering him a visiting scientist position. The second author was partially supported through NSF grant DMS-2001128. The third author is partially supported by J.C. Bose fellowship.

\section{Preliminaries}\label{se.prel}

We write $k$ for a generic base field fixed throughout, though we frequently specialize to $k=\bC$. Algebras and morphisms between them are always unital, and similarly (or rather dually) for coalgebras.

For the background on coalgebra or Hopf algebra theory needed below the reader can consult any number of good sources, such as \cites{swe,montgomery,maj-qg,rad-bk} (we cite results more specifically when needed). Sweedler notation is in use throughout, with implied summation. So for an element $h\in H$, its image through the comultiplication $\Delta:H\to H\otimes H$ is denoted by
\begin{equation*}
  \Delta h = h_1\otimes h_2. 
\end{equation*}
We will make heavy use of duality in several guises. To summarize, for a Hopf algebra $H$ we denote
\begin{itemize}
\item by $H^*$ the full vector space dual of $H$; it is, in general, only an algebra \cite[\S 1.2]{montgomery}, and a Hopf algebra when $H$ is finite-dimensional \cite[Proposition 1.4.2]{maj-qg};
\item by $H^{\circ}$ the {\it finite dual} of $H$ \cite[Definition 1.2.3]{montgomery}; it is always a Hopf algebra \cite[\S 9.1]{montgomery};
\item by $\widehat{H}$ a Hopf algebra {\it in duality} with $H$ via a pairing
  \begin{equation*}
    \<-,-\> : \widehat{H} \tens H \to k
  \end{equation*}
  satisfying the usual compatibility conditions:
  \begin{equation*}
    \<ab,c\> = \<a,c_1\>\<b,c_2\>,
  \end{equation*}
  etc.; see \cite[Definition 1.4.3]{maj-qg} or \cite[Definition 7.7.6]{rad-bk}. 
\end{itemize}

We use standard notation for coalgebraic and/or Hopf-algebras structure: $\Delta:H\to H\otimes H$ for the comultiplication, $\varepsilon:H\to k$ for the counit, $S:H\to H$ for the antipode, etc.

For an action of a Hopf algebra $H$ on an algebra $A$ making the latter into a (left) {\it module algebra} \cite[Definition 4.1.1]{montgomery} we write $A\rtimes H$ for the {\it smash product} denoted by $A\sharp H$ in \cite[Definition 4.1.3]{montgomery}.

\begin{defn}
  Let $H$ be a Hopf algebra and $A$ a left $H$-module algebra. The {\it commutant} $A'\cap A\rtimes H$ (or just plain $A'$ when the context is understood) is the subalgebra of $A\rtimes H$ consisting of elements that commute with every $a\in A\subseteq A\rtimes H$.

  An action of $H$ on $A$ is
  \begin{itemize}
  \item {\bf outer} if the commutant $A'\cap A\rtimes H$ consists only of the scalars;
  \item {\bf minimal} if the commutant $(A^H)'\cap A$ similarly consists only of scalars. 
  \end{itemize}  
\end{defn}

In general, an inclusion $A \subset B$ is said to be {\bf irreducible} if the (relative) commutant $A' \cap B$ is reduced to the scalars. Thus an action of $H$ on $A$ is outer (minimal) if $A \subset A\rtimes H$ (respectively, $A^H \subset A$) is irreducible.

We will frequently work over the complex numbers, with complex $*$-algebras and $*$-{\it co}algebras (e.g. the $\circ$-coalgebras of \cite{ash}): (co)algebras equipped with involutive, conjugate-linear self-maps `$*$' that reverse the (co)multiplication. Hopf $*$-algebras (\cite[Definition 2.1]{vd-dual}) are
\begin{itemize}
\item complex Hopf algebras
\item as well as $*$-algebras
\item such that $\Delta$ is a $*$-algebra morphism
\item and $*\circ S$ (i.e. the map $x\mapsto S(x)^*$) is an involution.
\end{itemize}
A Hopf $*$-algebra will automatically be a $*$-coalgebra under this involution (see e.g. the discussion in \cite[\S 1]{chi-fou}). For Hopf $*$-algebras $H$ and $H$-module $*$-algebras $A$ we will typically also assume compatibility between the action and the $*$-structures:
\begin{equation*}
  (h\cdot a)^* = S(h)^*\cdot a^*,\ \forall h\in H,\ a\in A. 
\end{equation*}

Recall also, for future reference, the following notion generalizing that of a module algebra over a Hopf algebra (e.g. \cite[Chapter VII]{swe}). 

\begin{defn}
  A linear morphism $\triangleright:C\otimes A\to B$ for two unital algebras $A$, $B$ and a (counital) coalgebra $C$ {\it measures $A$ to $B$} (or simply {\it measures} if $A$, $B$, etc. are understood) if
  \begin{itemize}
  \item We have
    \begin{equation*}
      c\triangleright (aa') = (c_1\triangleright a)(c_2\triangleright a'),\ \forall c\in C,\ a,a'\in A
    \end{equation*}
    (where $c\mapsto c_1\otimes c_2$ is the usual Sweedler notation for the comultiplication), and
  \item $c\triangleright 1 = \varepsilon(c)1$ for all $c\in C$, where the two $1$s are the units of $A$ and $B$ respectively.
  \end{itemize}
  When $C$ is a complex $*$-coalgebra and $A$ and $B$ are $*$-algebras we will typically impose the additional compatibility condition
  \begin{equation*}
    (c\triangleright a)^* = c^*\triangleright a^*. 
  \end{equation*}
  for $c\in C$ and $a\in A$.
\end{defn}

\begin{rem}
  Equivalently, the $*$-preservation condition can be phrased as requiring that the algebra morphism
  \begin{equation*}
    A\to \mathrm{Hom}(C, B)
  \end{equation*}
  corresponding to $C\otimes A\to B$ via the hom-tensor adjunction be one of $*$-algebras, noting that a $*$-algebra structure on $B$ and a $*$-coalgebra structure on $C$ jointly make $\mathrm{Hom}(C,B)$ into a $*$-algebra:
  \begin{equation*}
    f^*(c) = f(c^*)^*,\ \forall f\in \mathrm{Hom}(C,B),\ c\in C,
  \end{equation*}
  where the two stars on the right are those on $C$ and $B$.
\end{rem}

\begin{comm}
  A word of caution: when working with $*$-coalgebras or Hopf $*$-algebras, the finite duals $H^{\circ}$ are understood to be compatible with the $*$-structure. This means that in defining $H^{\circ}$ we consider $*$-ideals rather than just ideals (\cite[Definition 1.2.3]{montgomery}), etc.  
\end{comm}

\section{Universal measuring Hopf algebras}\label{se:univ}

We will have to generalize the setup recalled in Section \ref{se.prel} somewhat, to account for algebras, coalgebras and Hopf algebras {\it internal} to the category of $*$-vector spaces (see subsection \ref{subse:enr} below). For that reason, we take a category-theoretic detour.

\subsection{Categories, enrichment, etc.}\label{subse:enr}

$\cV$ will always denote a ``sufficiently rich'' category to enrich over: symmetric, monoidal, (co)complete, closed in the sense of \cite[\S VII.7]{macl} (i.e. the functors $x\otimes-$ are left adjoints), and, for good measure, locally presentable \cite[Definition 1.17]{ar}. $\cV$ will furthermore meet the technical requirements of \cite[Prposition 47 and Theorem 54 1.]{prst-ft1}; while we do not list these in detail, we note that they are met by every one of the few concrete examples of relevance here:
\begin{itemize}
\item vector spaces over a fixed ground field;
\item complex $*$-vector spaces (see \ref{def:*vect} below).
\end{itemize}

The conditions required in \cite[Proposition 47 and Theorem 54 1.]{prst-ft1} ensure, via these results, that the various universal constructions used in the proofs below are available:

\begin{itemize}
\item The forgetful functor $\mathrm{Coalg}_{\cV}\to \cV$ from coalgebras in $\cV$ to $\cV$ has a right adjoint (the {\it cofree} coalgebra on an object in $\cV$).
\item The category $\mathrm{Coalg}_{\cV}$ is complete and cocomplete, and in fact locally presentable.
\item The inclusion functor
  \begin{equation*}
    \mathrm{HAlg}_{\cV}\to \mathrm{Bialg}_{\cV}
  \end{equation*}
  from $\cV$-Hopf algebras to $\cV$-bialgebras has a right adjoint: every bialgebra $B\in \cV$ admits a universal map $H\to B$ from a Hopf algebra $H\in \cV$.
\end{itemize}

Other examples of appropriate $\cV$ would be sets, vector spaces, graded vector spaces, representations over an algebraic group, etc. Of special importance to us will be

\begin{defn}\label{def:*vect}
  The category $\mathrm{Vect}^*_{\bC}$ of {\it $*$-vector spaces} consists of complex vector spaces $V$ equipped with involutive conjugate-linear self-maps $*:V\to \overline{V}$, where $\overline{V}$ denotes the vector space complex-conjugate to $V$ (so this notation indicates that `$*$' is conjugate-linear as a self-map of $V$, but {\it linear} as a map $V\to \overline{V}$).

  We identify
  \begin{equation}\label{eq:rev}
    \overline{V\otimes W}\cong \overline{W}\otimes \overline{V},
  \end{equation}
  so the tensor product of $*$-vector spaces $(V,*)$ and $(W,*)$ is again a $*$-vector space $(V\otimes W,*)$, with this latter $*$ map {\it reversing} tensorands when regarded as a linear map
  \begin{equation*}
    V\otimes W\to \overline{V\otimes W}\cong \overline{W}\otimes \overline{V} .
  \end{equation*}
\end{defn}

\begin{rem}
  In particular, because of the tensor-product reversal in \ref{eq:rev}, an algebra internal to $\mathrm{Vect}^*_{\bC}$ is nothing but a $*$-algebra in the usual sense: a complex algebra equipped with an anti-multiplicative, conjugate-linear involution.

  Similarly, a coalgebra in $\mathrm{Vect}^*_{\bC}$ is a $*$-coalgebra, a Hopf algebra therein is a Hopf $*$-algebra, etc.
\end{rem}

We will also work with categories $\cC$ {\it enriched} and {\it tensored} over $\cV$: `enriched' means a {\it $\cV$-category} in the sense of \cite[\S 1.2]{kel}, while `tensored' means that for every object $c\in \cC$ the functor
\begin{equation*}
  \cC\ni d\mapsto \hom_{\cC}(c,d)\in \cV
\end{equation*}
has a left adjoint; we denote that adjoint by $-\otimes c$, so expressions like
\begin{equation*}
  v\otimes c\text{ for } v\in \cV,\ c\in \cC
\end{equation*}
become legal (and we might, on occasion, reverse the order of the tensorands).

\subsection{A general framework for universal measuring}\label{subse:univ}

\cite[Theorem 7.0.4]{swe} shows that for any two algebras $A$ and $B$ there is a universal measuring coalgebra $M(A,B)$ equipped with a measuring morphism
\begin{equation*}
  M(A,B)\otimes A\to B,
\end{equation*}
where universality, here and in the sequel, means that any measuring $C\otimes A\to B$ factors as
\begin{equation*}
  \begin{tikzpicture}[auto,baseline=(current  bounding  box.center)]
    \path[anchor=base] 
    (0,0) node (l) {$C\otimes A$}
    +(3,.5) node (u) {$M(A,B)\otimes A$}
    +(6,0) node (r) {$B$}
    ;
    \draw[->] (l) to[bend left=6] node[pos=.5,auto] {$\scriptstyle \psi\otimes\id$} (u);
    \draw[->] (u) to[bend left=6] node[pos=.5,auto] {$\scriptstyle $} (r);
    \draw[->] (l) to[bend right=6] node[pos=.5,auto,swap] {$\scriptstyle $} (r);
  \end{tikzpicture}
\end{equation*}
for a unique (coalgebra, in this case) morphism $\psi:C\to M(A,B)$. Furthermore, \cite[Exercise immediately preceding \S7.1]{swe} shows that $M(A,A)$ is naturally equipped with a bialgebra structure making $A$ into a module-algebra. In short:

\begin{thm}
  For any unital algebra $A$, there is a universal bialgebra $B=M(A,A)$ equipped with a map $B\otimes A\to A$ making $A$ into a $B$-module algebra. 
\end{thm}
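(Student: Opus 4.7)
The plan is to construct a multiplication and unit on $B := M(A,A)$ by invoking the universal property of the measuring coalgebra, and then to verify the bialgebra axioms via the uniqueness clause of that universal property.

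Let $\phi : B \otimes A \to A$ denote the universal measuring. The first step is to exhibit the composite
\[
B \otimes B \otimes A \xrightarrow{\id \otimes \phi} B \otimes A \xrightarrow{\phi} A
\]
as a measuring with respect to the tensor-product coalgebra structure on $B \otimes B$. Indeed, two applications of the measuring axiom for $\phi$, together with the identity $\Delta_{B \otimes B}(b \otimes b') = (b_1 \otimes b'_1) \otimes (b_2 \otimes b'_2)$, give
\[
(b \otimes b') \triangleright (aa') = \bigl((b_1 \otimes b'_1) \triangleright a\bigr)\bigl((b_2 \otimes b'_2) \triangleright a'\bigr),
\]
and similarly $(b \otimes b') \triangleright 1 = \varepsilon(b)\varepsilon(b')$. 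The universal property then produces a unique coalgebra morphism $\mu : B \otimes B \to B$ through which this measuring factors; this is the multiplication. For the unit, the tautological measuring $k \otimes A \to A$ (scalar multiplication, with $k$ as the trivial coalgebra) yields by universality a coalgebra morphism $u : k \to B$.

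To check associativity, I would observe that both composites $\mu \circ (\mu \otimes \id)$ and $\mu \circ (\id \otimes \mu)$ are coalgebra morphisms $B^{\otimes 3} \to B$ whose induced measurings $B^{\otimes 3} \otimes A \to A$ coincide, each computing the triple iterated action $\phi \circ (\id \otimes \phi) \circ (\id \otimes \id \otimes \phi)$. Uniqueness in the universal property then forces $\mu \circ (\mu \otimes \id) = \mu \circ (\id \otimes \mu)$. The unit laws follow analogously by comparing $\mu \circ (u \otimes \id)$ and $\mu \circ (\id \otimes u)$ against $\id_B$, both of which represent the same measuring after pairing with $\phi$. Since $\mu$ and $u$ are coalgebra morphisms by construction, $B$ acquires a bialgebra structure, and the original $\phi$ is, by design of $\mu$, a $B$-module-algebra action: the equality $\phi \circ (\mu \otimes \id) = \phi \circ (\id \otimes \phi)$ is precisely the factorization property that defined $\mu$.

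There is no real obstacle here beyond the usual caution required when massaging universal-property arguments; the single point that deserves care is the verification that the iterated application of $\phi$ is itself a measuring with respect to the \emph{tensor-product} coalgebra structure on $B \otimes B$, since this is the mechanism by which the measuring universal property upgrades a coalgebra to a bialgebra. Once that routine Sweedler-notation computation is in hand, every remaining identity is extracted, essentially for free, from the uniqueness clause. The same argument applies verbatim in the enriched setting of Subsection \ref{subse:enr}, since everything used is internal to $\cV$.
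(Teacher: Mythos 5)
Your proof is correct and follows essentially the same route as the paper, which simply cites Sweedler's Theorem 7.0.4 and the exercise preceding \S 7.1 of \cite{swe}: your argument is precisely the standard solution to that exercise, and it is also the same mechanism the paper later abstracts in Proposition \ref{pr:multialg} and Corollary \ref{cor:isbialg} (the iterated action is a measuring by the tensor-product coalgebra, the universal property yields $\mu$ and $u$ as coalgebra maps, and the uniqueness clause delivers associativity, unitality, and the module-algebra property).
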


The discussion further extends to {\it Hopf} algebras: according to \cite[Theorem 14]{prst-lim} the category of Hopf algebras is {\it coreflective} in that of bialgebras, i.e. every bialgebra $B$ admits a universal bialgebra morphism $H\to B$ from a Hopf algebra $H$. Taking for $H$ this envelope of $M(A,A)$, we obtain

\begin{thm}\label{th:initunivh}
  For any unital algebra $A$, there is a universal bialgebra $H=M(A,A)$ equipped with a map $H\otimes A\to A$ making $A$ into an $H$-module algebra.
\end{thm}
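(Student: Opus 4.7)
The plan is to build $H=M(A,A)$ by first obtaining the universal measuring coalgebra and then promoting it to a bialgebra via its own universal property. Invoking \cite[Theorem 7.0.4]{swe}---or equivalently, in the enriched setting of Subsection \ref{subse:enr}, the adjoint functor theorem applied to the cofree-coalgebra right adjoint of $\mathrm{Coalg}_{\cV}\to\cV$ recalled there---I would obtain a coalgebra $M(A,A)$ together with a measuring $\mu\colon M(A,A)\otimes A\to A$ universal in the sense that every measuring $C\otimes A\to A$ factors uniquely through $\mu$ via a coalgebra map $C\to M(A,A)$.

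To upgrade $M(A,A)$ to a bialgebra, I would exploit two closure properties of measurings. First, the identity $k\otimes A=A\xrightarrow{\id}A$ is a measuring with respect to the unit coalgebra $k$, yielding a classifying coalgebra map $\eta\colon k\to M(A,A)$ that serves as unit. Second, if coalgebras $C,C'$ measure $A$ via $\triangleright,\triangleright'$, the composite $(c\otimes c')\otimes a\mapsto c\triangleright(c'\triangleright' a)$ is a measuring by the tensor coalgebra $C\otimes C'$, as one checks directly using Sweedler notation for the tensor comultiplication. Applied to $C=C'=M(A,A)$ with $\triangleright=\triangleright'=\mu$, this classifies a coalgebra map $m\colon M(A,A)\otimes M(A,A)\to M(A,A)$ that will be the multiplication. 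The measuring $\mu$ is then a module-algebra action of $M(A,A)$ on $A$ essentially by the defining property of $m$.

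The bialgebra axioms---associativity and unitality of $m$, and the requirement that $\Delta$ and $\varepsilon$ be algebra morphisms with respect to $m$ and $\eta$---are then all verified by the standard uniqueness trick: for each axiom, both sides are coalgebra maps $M(A,A)^{\otimes n}\to M(A,A)$ that classify the same iterated measuring on $A$, so they coincide. For instance, both $m\circ(m\otimes\id)$ and $m\circ(\id\otimes m)$ classify the measuring $(x\otimes y\otimes z)\otimes a\mapsto x\cdot(y\cdot(z\cdot a))$, forcing associativity.

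Finally, for universality among bialgebras: given a bialgebra $B$ with a module-algebra structure $\triangleright$ on $A$, the action is in particular a measuring and so factors through a unique coalgebra morphism $\psi\colon B\to M(A,A)$; comparing the two coalgebra maps $\psi\circ m_B$ and $m\circ(\psi\otimes\psi)$ on $B\otimes B$, both of which classify the measuring $(b\otimes b')\otimes a\mapsto (bb')\triangleright a=b\triangleright(b'\triangleright a)$, shows $\psi$ is multiplicative, and the parallel argument for units promotes $\psi$ to a bialgebra morphism. The principal obstacle in this plan lies squarely in the first step---constructing the universal measuring coalgebra itself---since the subsequent bialgebra promotion and universality statement are formal consequences of the universal property; this existence is guaranteed either by Sweedler's explicit construction inside a cofree coalgebra on $\mathrm{Hom}(A,A)$, or by the adjoint-functor-theorem machinery enabled by the local presentability of $\mathrm{Coalg}_{\cV}$ stipulated in Subsection \ref{subse:enr}.
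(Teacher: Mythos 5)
Your argument is correct for the statement as literally worded, and it follows the same route the paper does: the paper simply cites \cite[Theorem 7.0.4]{swe} for the universal measuring coalgebra and the exercise preceding \S 7.1 of \cite{swe} for the bialgebra structure on $M(A,A)$, whereas you carry out that promotion explicitly (unit and multiplication classified by the trivial and iterated measurings, with the axioms and the universal property among bialgebras all obtained from uniqueness of classifying maps). All of those verifications check out; in particular your observation that $C\otimes C'$ with the tensor comultiplication measures $A$ via $(c\otimes c')\otimes a\mapsto c\triangleright(c'\triangleright' a)$ is exactly the mechanism Sweedler uses.

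The one point to flag is that Theorem \ref{th:initunivh} is, in context, meant to be the \emph{Hopf}-algebra upgrade of the bialgebra statement immediately preceding it (the word ``bialgebra'' in its statement appears to be a slip): the paper introduces it with ``Taking for $H$ this envelope of $M(A,A)$, we obtain\dots'', where the envelope is the universal bialgebra morphism $H\to M(A,A)$ from a Hopf algebra supplied by the coreflectivity of Hopf algebras in bialgebras (\cite[Theorem 14]{prst-lim}). Your proposal stops at the bialgebra $M(A,A)$ and never passes to this coreflection. If the intended target is the universal Hopf algebra acting on $A$, you need one further step: compose $H\otimes A\to M(A,A)\otimes A\to A$ to make $A$ an $H$-module algebra, and note that any Hopf algebra $K$ making $A$ a module algebra yields a bialgebra map $K\to M(A,A)$ that factors uniquely through $H\to M(A,A)$ by the universal property of the coreflection. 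With that addition your argument covers the intended content as well.
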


Theorem \ref{th:initunivh} is generalized in \cite[Theorem 4.2 and discussion following it]{agv} to what the authors of that paper call {\it $\Omega$-algebras}: vector spaces equipped with maps $A^{\otimes l}\to A^{\otimes r}$ that generalize, say, multiplication maps $A^{\otimes 2}\to A$, units $k\to A$, etc. In a similar direction, we will need to accommodate the following type of situation, of interest in Galois-theoretic considerations. We will have an algebra (or $*$-algebra) $A$, a subalgebra $A'\subseteq A$, and will require the measuring morphisms
\begin{equation*}
  C\otimes A\to A
\end{equation*}
to {\it fix} $A'$ in the sense that
\begin{equation*}
  c\cdot a = \varepsilon(c)a,\ \forall c\in C,\ a\in A'.
\end{equation*}
More generally, one can fix
\begin{itemize}
\item an algebra $A$
\item with a subalgebra $A'\subseteq A$ and
\item an algebra morphism $f:A'\to B$,
\end{itemize}
and ask that the measuring $C\otimes A\to B$ preserve $f$ in the sense that
\begin{equation*}
  c\cdot a = \varepsilon(c) f(a),\ \forall c\in C,\ a\in A'. 
\end{equation*}

To house the various structures we would like our measurings to preserve we introduce the following notion.

\begin{defn}
  In general, a {\it span} in $\cV$ is a diagram of the form 
  \begin{equation*}
    \begin{tikzpicture}[auto,baseline=(current  bounding  box.center)]
      \path[anchor=base] 
      (0,0) node (1) {$\bullet$} 
      +(4,0) node (2) {$\bullet$}
      +(2,-.5) node (d) {$\bullet$}
      ;
      \draw[->] (1) to[bend right=6] node[pos=.5,auto] {$\scriptstyle $} (d);
      \draw[->] (2) to[bend left=6] node[pos=.5,auto] {$\scriptstyle $} (d);
    \end{tikzpicture}
  \end{equation*}
  
  For objects $A$ and $B$ in $\cC$ an {\it $(A,B)$-span} (or span {\it on} $(A,B)$, or $\hom(A,B)$, or just plain span when $A$ and $B$ are understood) is a morphism (in $\cV$) of the form
  \begin{equation}\label{eq:span}
    \begin{tikzpicture}[auto,baseline=(current  bounding  box.center)]
      \path[anchor=base] 
      (0,0) node (1) {$V^{\otimes l}$} 
      +(4,0) node (2) {$V^{\otimes r}$}
      +(2,-.5) node (d) {$\bullet$}
      ;
      \draw[->] (1) to[bend right=6] node[pos=.5,auto] {$\scriptstyle $} (d);
      \draw[->] (2) to[bend left=6] node[pos=.5,auto] {$\scriptstyle $} (d);
    \end{tikzpicture}
  \end{equation}
  for
  \begin{itemize}
  \item an object $V\in \cV$ equipped with a morphism $V\to (A,B)$ (we will frequently simply take $V=(A,B)$, but not always);
  \item non-negative integers $l$ and $r$ (possibly including $0$), where the $0^{th}$ tensor power means the monoidal unit of $\cV$.
  \end{itemize}

  An {\it $(A,B)$-multispan} is a tuple of $(A,B)$-spans.
\end{defn}

Some examples follow, explaining how this fits into the above discussion on measuring.

\begin{exa}\label{ex:mult}
  If $A$ and $B$ are algebras internal to $\cC$, their multiplication operators $m_A:A^{\otimes 2}\to A$ and $m_B$ afford us the following span (where we omit `$\hom$', writing simply $(\bullet,-)$ for $\hom_{\cC}(\bullet,-)$):
  \begin{equation*}
    \begin{tikzpicture}[auto,baseline=(current  bounding  box.center)]
      \path[anchor=base] 
      (0,0) node (lu) {$(A,B)^{\otimes 2}$}
      +(3,0) node (u) {$(A^{\otimes 2},B^{\otimes 2})$}
      +(0,-1) node (ld) {$(A,B)$}
      +(6,-.5) node (r) {$(A^{\otimes 2},B)$}
      ;
      \draw[->] (lu) to[bend left=6] node[pos=.5,auto] {$\scriptstyle $} (u);
      \draw[->] (u) to[bend left=6] node[pos=.5,auto] {$\scriptstyle m_B\circ$} (r);
      \draw[->] (ld) to[bend right=6] node[pos=.5,auto,swap] {$\scriptstyle \circ m_A$} (r);
    \end{tikzpicture}
  \end{equation*}
\end{exa}

\begin{exa}\label{ex:omega}
  More generally, $\Omega$-algebras in the sense of \cite[\S 3.1]{agv} can be treated similarly; any two, say $A$ and $B$, will then give rise to a multispan consisting of one span for each $\omega\in \Omega$.
\end{exa}

\begin{exa}\label{ex:subalg}
  This framework can also accommodate the setup described above, of a morphism $f:A'\to B$ from a subalgebra $A'\subseteq A$ (say $\cV=\cC=\mathrm{Vect}$, for simplicity).  
  \begin{equation*}
    \begin{tikzpicture}[auto,baseline=(current  bounding  box.center)]
      \path[anchor=base] 
      (0,0) node (lu) {$(A,B)$}
      +(0,-1) node (ld) {$(A,B)^{\otimes 0}\cong {\bf 1}$}
      +(4,-.5) node (r) {$(A',B),$}
      ;
      \draw[->] (lu) to[bend left=6] node[pos=.5,auto] {$\scriptstyle \text{restrict}$} (r);
      \draw[->] (ld) to[bend right=6] node[pos=.5,auto,swap] {$\scriptstyle f$} (r);
    \end{tikzpicture}
  \end{equation*}
  where in the bottom arrow ${\bf 1}$ denotes the monoidal unit (the ground field, in the category of vector spaces), and the morphism simply picks out the distinguished morphism $f:A'\to B$.
\end{exa}

\begin{defn}
  Let $C\in \cV$ be a coalgebra in $\cV$, and consider a multispan $\cM$ on $(A,B)$, consisting of maps (\ref{eq:span}). A {\it measuring} by $C$ on $\cM$ is a morphism $\psi:C\to (A,B)$ in $\cV$ that makes all diagrams
  \begin{equation}\label{eq:commspan}
    \begin{tikzpicture}[auto,baseline=(current  bounding  box.center)]
      \path[anchor=base] 
      (0,0) node (1) {$\hom(A,B)^{\otimes l}$}
      +(4,0) node (2) {$\hom(A,B)^{\otimes r}$}
      +(2,-.5) node (d) {$\bullet$}
      +(0,1.5) node (lu) {$C^{\otimes l}$}
      +(4,1.5) node (ru) {$C^{\otimes r}$}
      +(2,2) node (m) {$C$}
      ;
      \draw[->] (1) to[bend right=6] node[pos=.5,auto] {$\scriptstyle $} (d);
      \draw[->] (2) to[bend left=6] node[pos=.5,auto] {$\scriptstyle $} (d);
      \draw[->] (m) to[bend right=6] node[pos=.5,auto] {$\scriptstyle $} (lu);
      \draw[->] (m) to[bend left=6] node[pos=.5,auto] {$\scriptstyle $} (ru);
      \draw[->] (lu) to[bend right=6] node[pos=.5,auto,swap] {$\scriptstyle \psi^{\otimes l}$} (1);
      \draw[->] (ru) to[bend left=6] node[pos=.5,auto] {$\scriptstyle \psi^{\otimes r}$} (2);
    \end{tikzpicture}
  \end{equation}
  (attached to spans in $\cM$) commute.

  We also say that $C$ {\it measures $\cM$}.
\end{defn}

\begin{thm}\label{th:univcoalgv}
  For a $\cV$-enriched and tensored category $\cC$, objects $A,B\in \cC$ and a multispan $\cM$ on $(A,B)$ the category of coalgebras in $\cV$ measuring $\cM$ has a terminal object $M(\cM)$.
\end{thm}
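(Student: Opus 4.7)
The plan is to realize $M(\cM)$ as a limit in the complete category $\mathrm{Coalg}_{\cV}$, leveraging the cofree-coalgebra adjunction stated at the start of Subsection~\ref{subse:enr}.

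\textbf{Step 1 (parametrizing candidate measurings).} Let $F:\cV\to \mathrm{Coalg}_{\cV}$ denote the cofree-coalgebra functor, right adjoint to the forgetful functor $U$, and write $\eta,\varepsilon$ for the unit and counit of $U\dashv F$. Set $G:=F(\hom(A,B))$ and let $\varepsilon_{\hom(A,B)}:G\to \hom(A,B)$ be the counit. The adjunction gives a natural bijection, for any coalgebra $C\in\mathrm{Coalg}_{\cV}$, between coalgebra maps $\pi:C\to G$ and $\cV$-maps $\psi:C\to \hom(A,B)$, via $\psi=\varepsilon_{\hom(A,B)}\circ U(\pi)$. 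These $\psi$'s are precisely the underlying data of a candidate measuring.

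\textbf{Step 2 (encoding each span as an equation of coalgebra maps).} Fix a span in $\cM$ with bottom object $Z$ as in \eqref{eq:span}. Using the universal $\cV$-map $\varepsilon_{\hom(A,B)}$, the iterated comultiplications $\Delta^{(l)}_G,\Delta^{(r)}_G$ on $G$, and the two legs of the span, assemble two $\cV$-morphisms $\phi_\ell,\phi_r:G\to Z$. By the adjunction they lift uniquely to coalgebra morphisms $\widetilde\phi_\ell,\widetilde\phi_r:G\to F(Z)$. The key compatibility is that, for any coalgebra morphism $\pi:C\to G$, one has
\[
\psi^{\otimes k}\circ \Delta^{(k)}_C \;=\; \varepsilon_{\hom(A,B)}^{\otimes k}\circ \Delta^{(k)}_G\circ U(\pi)\qquad (k\in\{l,r\}),
\]
because $\pi$ intertwines comultiplications. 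Consequently, the commutativity of the measuring diagram \eqref{eq:commspan} for $\psi$ is equivalent, under $U\dashv F$, to the equality $\widetilde\phi_\ell\circ \pi=\widetilde\phi_r\circ \pi$ in $\mathrm{Coalg}_{\cV}$.

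\textbf{Step 3 (assembling the limit).} For each span in the multispan $\cM$, form the equalizer
\[
E_{\mathrm{span}}\longrightarrow G\;\rightrightarrows\; F(Z)
\]
in $\mathrm{Coalg}_{\cV}$; then take the (wide) limit of these equalizers over all spans of $\cM$. Since $\mathrm{Coalg}_{\cV}$ is complete by local presentability, this limit $M(\cM)\to G$ exists as a coalgebra in $\cV$. The composition $M(\cM)\to G\xrightarrow{\varepsilon_{\hom(A,B)}}\hom(A,B)$ is a measuring of $\cM$, and by the correspondence in Step~2, every coalgebra measuring of $\cM$ factors uniquely through $M(\cM)$; this is exactly the required terminality.

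\textbf{Main obstacle.} The analytical content is concentrated in Step~2: checking that the cofree adjunction cleanly trades the commutativity of \eqref{eq:commspan}, which involves the $\cV$-morphism $\psi$ and higher comultiplications, for a single equation between coalgebra morphisms into $F(Z)$. Once this translation is in place, Steps~1 and~3 are essentially formal consequences of having cofree coalgebras and of $\mathrm{Coalg}_{\cV}$ being complete; potential size issues with large multispans are absorbed by local presentability, since the collection of distinct constraints on a coalgebra map into $G$ is bounded by $\hom_{\cV}(G,-)$ on a small set of targets.
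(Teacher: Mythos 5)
Your argument is correct, and it arrives at the same object as the paper by a formally dual route. The paper's proof is a one-line appeal to the Sweedler/Agore--Gordienko--Vercruysse pattern: form the cofree coalgebra on $\hom(A,B)$ and take the colimit (in effect, the union) of all subcoalgebras of it for which the diagrams \eqref{eq:commspan} commute; terminality then follows because every measuring coalgebra maps into one of these subobjects, but one owes the reader the (easy, not entirely free) check that the colimit of ``good'' subcoalgebras is still good. You instead use the completeness of $\mathrm{Coalg}_{\cV}$ (also guaranteed by the hypotheses of Subsection \ref{subse:enr}) together with the adjunction $U\dashv F$ to transpose each span condition into an equality of a parallel pair of coalgebra morphisms from $G=F(\hom(A,B))$ to $F(Z)$, and then cut out $M(\cM)$ as a joint equalizer in $\mathrm{Coalg}_{\cV}$. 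The payoff of your version is that both the fact that $M(\cM)$ itself measures $\cM$ and its terminality drop out formally from the universal property of the equalizer, together with the equalizer inclusion being a monomorphism; the only real work is concentrated in your Step~2, which you verify correctly using that coalgebra maps intertwine iterated comultiplications and that the adjunction bijection is natural in the coalgebra variable. One small remark: since a multispan is by definition a tuple, hence a set, of spans, the wide limit you form is small, so completeness of $\mathrm{Coalg}_{\cV}$ already suffices and the closing appeal to local presentability for size reasons is not needed.
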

\begin{proof}
  Much as in the proofs of, say, \cite[Theorem 7.0.4]{swe} or \cite[Theorem 3.10]{agv}: first consider the cofree coalgebra $C(A,B)$ on the $\cV$-object $\hom(A,B)$, and then take the colimit of the category of subobjects $C\to C(A,B)$ consisting of objects for which the diagrams (\ref{eq:commspan}) commute.
\end{proof}

\begin{cor}\label{cor:llst}
  For any
  \begin{itemize}
  \item algebra $A$ over a field $k$;
  \item $k$-algebra $A$ equipped with a functional $\tau:A\to k$;
  \item complex $*$-algebra or
  \item complex $*$-algebra equipped with a functional $\tau:A\to \bC$. 
  \end{itemize}
  there is a universal coalgebra ($*$-coalgebra in the two latter cases) measuring the entirety of the structure. Furthermore, the same holds if we enrich this structure with a subalgebra $A'\subseteq A$ and require that the measuring fix it.
\end{cor}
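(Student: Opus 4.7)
The plan is, in each of the four cases, to assemble a multispan whose coalgebraic measurings are exactly the structure-preserving measurings required, and then to invoke Theorem \ref{th:univcoalgv}. The enriching category $\cV$ is taken to be the category of $k$-vector spaces in the first two bullets and $\mathrm{Vect}^*_\bC$ in the last two; both meet the standing hypotheses recalled in Subsection \ref{subse:enr}, and (co)algebras internal to $\mathrm{Vect}^*_\bC$ are by design the $*$-(co)algebras in play.

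For a plain algebra $A$, set $B := A$ and form the multispan consisting of the multiplication span of Example \ref{ex:mult} together with the unit span: apex $B$, left leg $\hom(A,B) \to B$ given by evaluation at $1_A$, right leg $\mathbf{1} \to B$ picking out $1_B$, with $l=1$ and $r=0$. Measurings of this pair recover exactly the measurings $C \otimes A \to A$ in the sense of Section \ref{se.prel}. When a functional $\tau : A \to k$ is additionally present, annex a further span with apex $\hom(A,k)$, left leg $\hom(A,A) \xrightarrow{\tau \circ -} \hom(A,k)$, and right leg $\mathbf{1} \xrightarrow{\tau} \hom(A,k)$ (again $l=1$, $r=0$); unpacking (\ref{eq:commspan}) yields exactly $\tau(c\cdot a) = \varepsilon(c)\tau(a)$, i.e.\ $\tau$-preservation. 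In the $*$-contexts, the same spans, read internally to $\mathrm{Vect}^*_\bC$, automatically encode the compatibility $(c \cdot a)^* = c^* \cdot a^*$, because morphisms in $\mathrm{Vect}^*_\bC$ are $*$-intertwining by definition; similarly, $\tau : A \to \bC$ being a $\cV$-morphism encodes the self-adjointness $\tau(a^*) = \overline{\tau(a)}$.

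To fix a subalgebra $A' \subseteq A$ pointwise, adjoin to any of the above multispans the span of Example \ref{ex:subalg} with $f : A' \hookrightarrow A$ the inclusion; the resulting constraint reads $c \cdot a = \varepsilon(c) a$ for all $a \in A'$, which is the desired fixing condition. Applying Theorem \ref{th:univcoalgv} to each of the four enlarged multispans now produces a terminal measuring coalgebra in $\cV$, which is the universal object claimed.

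No substantial obstacle is anticipated: every constraint of interest is read off directly as a commutativity condition of the generic shape (\ref{eq:commspan}) for an appropriate span, after which the existence is immediate from Theorem \ref{th:univcoalgv}. The one point requiring mild care is that the cofree-coalgebra construction entering the proof of that theorem be interpreted internally to the chosen $\cV$; in the $*$-setting this is exactly the $*$-compatible finite-dual convention warned about in the comment closing Section \ref{se.prel}.
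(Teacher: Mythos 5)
Your proposal is correct and follows essentially the same route as the paper: both reduce each case to Theorem \ref{th:univcoalgv} by choosing $\cV=\cC$ to be $\mathrm{Vect}_k$ or $\mathrm{Vect}^*_{\bC}$ and assembling a multispan from the multiplication/unit spans of Examples \ref{ex:mult}--\ref{ex:omega}, a span encoding the functional $\tau$, and the span of Example \ref{ex:subalg} specialized to $B=A$, $f=\id_{A'}$ for the fixing condition. The only difference is that you spell out the functional and unit spans explicitly where the paper simply cites Example \ref{ex:omega}.
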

\begin{proof}
  These are all particular instances of Theorem \ref{th:univcoalgv}, for various choices of category and/or multispan. To take the case of an embedding of $*$-algebras $A'\subseteq A$, for example, one takes
  \begin{equation*}
    \cV = \cC = \mathrm{Vect}^*_{\bC}
  \end{equation*}
  and spans corresponding to
  \begin{itemize}
  \item the multiplication and unit of $A$, as in Examples \ref{ex:mult} and \ref{ex:omega}, as well as
  \item the embedding $A'\subseteq A$, by specializing Example \ref{ex:subalg} to $B=A$ and $f=\id_{A'}$.
  \end{itemize}
  One can further handle functionals as instances of Example \ref{ex:omega} again.
\end{proof}

Next, we consider the issue of universal measuring {\it bi}algebras and Hopf algebras. To that end, we will have to take into account the interaction between several multispans; in turn, this requires some terminology.

\begin{defn}
  The {\it shape} of a span (\ref{eq:span}) is the pair $(l,r)$. Similarly, the shape of a multispan is the tuple of shapes of its individual spans (the ordering of that tuple is part of the data).
\end{defn}

Now consider two spans: (\ref{eq:span}), denoted $S(A,B)$ and an analogous one, $S(B,C)$, of the same shape $(l,r)$. Their {\it tensor product} $S(B,C)\otimes S(A,B)$ is simply their tensor product in the category of spans:
\begin{equation}\label{eq:tensspan}
  \begin{tikzpicture}[auto,baseline=(current  bounding  box.center)]
    \path[anchor=base] 
    (0,0) node (1) {$(V_{B,C}\otimes V_{A,B})^{\otimes l}$} 
    +(4,0) node (2) {$(V_{B,C}\otimes V_{A,B})^{\otimes r}$}
    +(2,-.5) node (d) {$\bullet\otimes\bullet$}
    ;
    \draw[->] (1) to[bend right=6] node[pos=.5,auto] {$\scriptstyle $} (d);
    \draw[->] (2) to[bend left=6] node[pos=.5,auto] {$\scriptstyle $} (d);
  \end{tikzpicture}
\end{equation}
where
\begin{itemize}
\item $V_{A,B}\in \cV$ is the object mapping to $(A,B)$ that is part of the data specifying $S(A,B)$, and similarly for $V_{B,C}$;
\item we have made free use of the symmetry of $\cV$ to permute the top tensorands;
\item the two unidentified bullets at the bottom are the two respective tips of the original spans $S(A,B)$ and $S(B,C)$. 
\end{itemize}

\begin{defn}
  Let $\cM_1$ and $\cM_2$ be two multispans of the same shape, consisting respectively of, say, spans $S_{1,j}$ and $S_{2,j}$ of respective shapes $(l_j,r_j)$. The {\it tensor product} $\cM_1\otimes \cM_2$ is the tuple of tensor products $S_{1,j}\otimes S_{2,j}$ in the sense of (\ref{eq:tensspan}). 
\end{defn}

An example will illustrate the point. 

\begin{exa}\label{ex:comp}
  Consider two spans as in Example \ref{ex:subalg}, one, $S(f)$, for a map $f:A'\to B$, $A'\subseteq A$ and similarly, $S(g)$ for $g:B'\to C$, $B'\subseteq B$. Assume furthermore that $f$ and $g$ are composable, in the sense that the image of $f$ is contained in $B'$. We then have a morphism of spans from
  \begin{equation*}
    S(g)\otimes S(f)=
    \begin{tikzpicture}[auto,baseline=(current  bounding  box.center)]
      \path[anchor=base] 
      (0,0) node (1) {${\bf 1}$} 
      +(8,0) node (2) {$(B,C)\otimes (A,B)_{A',B'}$}
      +(4,-.5) node (d) {$(B',C)\otimes (A',B')$}
      ;
      \draw[->] (1) to[bend right=6] node[pos=.5,auto] {$\scriptstyle $} (d);
      \draw[->] (2) to[bend left=6] node[pos=.5,auto] {$\scriptstyle $} (d);
    \end{tikzpicture}
  \end{equation*}
  where
  \begin{equation*}
    (A,B)_{A',B'}\subseteq (A,B) \in \cV
  \end{equation*}
  is the subobject consisting of morphisms that map $A'$ to $B'$, i.e. the pullback
  \begin{equation*}
    \begin{tikzpicture}[auto,baseline=(current  bounding  box.center)]
      \path[anchor=base] 
      (0,0) node (1) {$(A',B')$} 
      +(4,0) node (2) {$(A,B),$}
      +(2,-.5) node (d) {$(A',B)$}
      +(2,.5) node (u) {$(A,B)_{A',B'}$}
      ;
      \draw[->] (1) to[bend right=6] node[pos=.5,auto,swap] {$ $} (d);
      \draw[->] (2) to[bend left=6] node[pos=.5,auto] {$ $} (d);
      \draw[->] (u) to[bend right=6] node[pos=.5,auto] {$$} (1);
      \draw[->] (u) to[bend left=6] node[pos=.5,auto] {$$} (2);
    \end{tikzpicture}
  \end{equation*}  
  to
  \begin{equation*}
    S:=
    \begin{tikzpicture}[auto,baseline=(current  bounding  box.center)]
      \path[anchor=base] 
      (0,0) node (1) {${\bf 1}$} 
      +(6,0) node (2) {$(A,C)$}
      +(3,-.5) node (d) {$(A',C)$}
      ;
      \draw[->] (1) to[bend right=6] node[pos=.5,auto] {$\scriptstyle $} (d);
      \draw[->] (2) to[bend left=6] node[pos=.5,auto] {$\scriptstyle $} (d);
    \end{tikzpicture}
  \end{equation*}
  defined as follows:
  \begin{itemize}
  \item ${\bf 1}\to {\bf 1}$ is the identity;
  \item the map $(B,C)\otimes(A,B)_{A',B'}\to (A,C)$ is just composition,
  \item as is $(B',C)\otimes (A',B')\to (A',C)$.
  \end{itemize}
\end{exa}

\begin{prop}\label{pr:multialg}
  Under the hypotheses of Theorem \ref{th:univcoalgv}, let $A$, $B$ and $C$ be three objects in $\cC$ and $\cM_{A,B}$ and $\cM_{B,C}$ multispans of the same shape (between the objects indicated).

  A morphism
  \begin{equation*}
    \cM_{B,C}\otimes \cM_{A,B}\to \cM_{A,C}
  \end{equation*}
  of multispans uniquely induces a corresponding morphism
  \begin{equation*}
    M(\cM_{B,C})\otimes M(\cM_{A,B})\to M(\cM_{A,C})
  \end{equation*}
  of coalgebras.
\end{prop}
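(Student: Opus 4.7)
The strategy is to exhibit the tensor-product coalgebra $M(\cM_{B,C}) \otimes M(\cM_{A,B})$ as a measuring of $\cM_{A,C}$; the universal property of $M(\cM_{A,C})$ supplied by Theorem \ref{th:univcoalgv} will then automatically furnish a unique coalgebra morphism of the required form.

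First I would endow $M(\cM_{B,C}) \otimes M(\cM_{A,B})$ with its standard tensor-product coalgebra structure in $\cV$ (using the symmetry of $\cV$ to interleave the tensorands in $\Delta$). Writing $\psi_{A,B}: M(\cM_{A,B}) \to V_{A,B}$ and $\psi_{B,C}: M(\cM_{B,C}) \to V_{B,C}$ for the canonical measurings, I would form the candidate
\begin{equation*}
  \Psi: M(\cM_{B,C}) \otimes M(\cM_{A,B}) \xrightarrow{\psi_{B,C}\otimes\psi_{A,B}} V_{B,C} \otimes V_{A,B} \longrightarrow \hom(A,C),
\end{equation*}
where the final arrow pushes into $\hom(B,C) \otimes \hom(A,B)$ and then applies composition of morphisms in $\cC$.

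The main task is to verify that $\Psi$ satisfies the measuring diagram (\ref{eq:commspan}) for every span $S^{A,C}$ in $\cM_{A,C}$, of shape $(l,r)$. By hypothesis, $S^{A,C}$ is the target of a morphism of spans from the tensor-product span $S^{B,C} \otimes S^{A,B}$ of the form (\ref{eq:tensspan}). The measuring diagrams for $\psi_{A,B}$ and $\psi_{B,C}$ attached to $S^{A,B}$ and $S^{B,C}$ commute by construction of the universal measuring coalgebras; tensoring these two commuting diagrams yields that $\Psi$, together with the coalgebra structure on $M(\cM_{B,C}) \otimes M(\cM_{A,B})$, measures the tensor-product span $S^{B,C} \otimes S^{A,B}$ in the sense of (\ref{eq:commspan}). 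Post-composing with the given morphism of spans $S^{B,C} \otimes S^{A,B} \to S^{A,C}$ then transports this to the required commuting diagram for $S^{A,C}$, exactly because the span morphism intertwines the two legs of $S^{B,C} \otimes S^{A,B}$ with those of $S^{A,C}$.

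Once every span in $\cM_{A,C}$ has been handled, $\Psi$ is a bona fide measuring, and the universal property of $M(\cM_{A,C})$ gives a unique coalgebra map $M(\cM_{B,C}) \otimes M(\cM_{A,B}) \to M(\cM_{A,C})$ through which $\Psi$ factors. The only real obstacle is diagrammatic bookkeeping: reconciling the tensor powers $(M(\cM_{B,C}) \otimes M(\cM_{A,B}))^{\otimes l}$ with $M(\cM_{B,C})^{\otimes l} \otimes M(\cM_{A,B})^{\otimes l}$ via the braiding of $\cV$, and tracking the identification (\ref{eq:tensspan}) carefully. Beyond such notation management, no further ingredient is required.
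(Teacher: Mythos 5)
Your argument is correct and is exactly the unpacking of the paper's own (one-line) proof, which simply invokes the terminality of $M(\cM_{A,C})$ among coalgebras measuring $\cM_{A,C}$: you exhibit the tensor-product coalgebra as such a measuring via the given morphism of multispans and then factor through the terminal object. No discrepancy with the paper's approach.
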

\begin{proof}
  Immediate from the universality property of the measuring coalgebra $M(\cM)$ of a multispan $\cM$ as a terminal object.
\end{proof}

\begin{cor}\label{cor:isbialg}
  In any of the cases listed in Corollary \ref{cor:llst} the universal measuring coalgebra is automatically a bialgebra.
\end{cor}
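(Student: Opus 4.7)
The plan is to realize the bialgebra structure on $M(\cM)$ as the outcome of Proposition \ref{pr:multialg}, applied to natural composition morphisms of multispans.

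First, I would exhibit a morphism of multispans $\cM\otimes\cM\to\cM$ in each of the cases of Corollary \ref{cor:llst}, guided by the principle that the composition of two structure-preserving maps $A\to A$ is again structure-preserving. Example \ref{ex:comp} already carries this out at the level of a subalgebra-fixing span; the remaining ingredients — the algebra multiplication span of Example \ref{ex:mult}, the unit, the span encoding a functional $\tau:A\to k$, and the $*$-compatibility — each admit an analogous span composition morphism, verifiable by short Sweedler calculations (for $\tau$, for instance, $\tau(h\cdot(k\cdot a)) = \varepsilon(h)\varepsilon(k)\tau(a) = \varepsilon(hk)\tau(a)$, and analogously for $*$-compatibility and subalgebra-fixing). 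In the same vein, $\id_A$ provides a measuring by the monoidal unit, giving a morphism from a trivial multispan ${\bf 1}$ to $\cM$.

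Second, I would invoke Proposition \ref{pr:multialg}: applied to the composition morphism it produces a coalgebra map $\mu:M(\cM)\otimes M(\cM)\to M(\cM)$, and applied to the identity-measuring morphism it produces $\eta:{\bf 1}\to M(\cM)$. That $\mu$ and $\eta$ are coalgebra morphisms is part of the conclusion of Proposition \ref{pr:multialg}, so the bialgebra compatibility between product and coproduct is built into the construction.

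Third, associativity of $\mu$ and the unit axioms follow from the terminal-object property of $M(\cM)$ established in Theorem \ref{th:univcoalgv}: in each case both sides of the axiom equation arise as the coalgebra map induced by the same iterated multispan morphism $\cM^{\otimes 3}\to\cM$ (respectively by identity measurings), so uniqueness forces them to coincide. The only genuine obstacle I anticipate is bookkeeping in the $\mathrm{Vect}^*_{\bC}$ cases: under the tensor-reversal convention \ref{eq:rev}, one has to check that the composition morphism is really a morphism of $*$-objects, which amounts to tracking how the two measurings interact under the involution. This is a routine but non-trivial check, and is the only step where care is genuinely required.
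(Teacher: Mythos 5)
Your proposal is correct and follows essentially the same route as the paper: both specialize Proposition \ref{pr:multialg} to the composition morphism of multispans (with Example \ref{ex:comp} instantiated at $B=C=A$ and $f$ the inclusion $A'\subseteq A$) to obtain the multiplication on $M(\cM)$, and both derive associativity from the uniqueness clause of that proposition. Your additional remarks on the unit morphism, the functional $\tau$, and the $*$-bookkeeping merely flesh out details the paper leaves implicit.
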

\begin{proof}
  This are all specializations of Proposition \ref{pr:multialg}, again choosing specific $\cC$, $\cV$ and multispans; the associativity of the bialgebra structure follows from the uniqueness in Proposition \ref{pr:multialg}.

  To fix ideas we focus, say, on the universal $*$-coalgebra measuring $A$ and fixing a specific $*$-subalgebra $A'\subseteq A$. We can then specialize Example \ref{ex:comp} to the case $B=C=A$ and
  \begin{equation*}
    f:A'\to A = \text{ the embedding }A'\subseteq A,
  \end{equation*}
  (i.e. the restriction of $\mathrm{id}_A$ to $A'$) to conclude.
\end{proof}

\begin{thm}\label{th:univhpf}
  In any of the cases listed in Corollary \ref{cor:llst} there is a universal measuring Hopf algebra.
\end{thm}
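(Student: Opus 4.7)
The strategy is to combine Corollary \ref{cor:isbialg}, which already produces a universal measuring \emph{bi}algebra $B=M(\cM)\in\cV$ in each of the situations of Corollary \ref{cor:llst}, with the Hopf coreflection that was recalled in Subsection \ref{subse:enr}: under the standing assumptions on $\cV$, the inclusion $\mathrm{HAlg}_{\cV}\hookrightarrow \mathrm{Bialg}_{\cV}$ admits a right adjoint, so $B$ fits into a universal bialgebra morphism $H\to B$ with $H\in\mathrm{HAlg}_{\cV}$. The candidate universal measuring Hopf algebra is this $H$.

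First, I would check that $H$ itself measures $\cM$. The measuring structure on $B$ is a morphism of coalgebras $\psi_B:B\to \hom(A,B')$ (suitably generalized for the multispan $\cM$) rendering all diagrams of the form \eqref{eq:commspan} commutative. Composing with the underlying coalgebra morphism $H\to B$ produces a coalgebra morphism $\psi_H:H\to \hom(A,B')$, and the diagrams \eqref{eq:commspan} for $H$ commute because they do so for $B$ and all relevant arrows are natural in the coalgebra argument. In the $*$-setting this works equally well since the Hopf coreflection is performed internally to $\cV=\mathrm{Vect}^*_{\bC}$, so $H\to B$ is automatically a morphism of $*$-bialgebras and the $*$-compatibility of the measuring is inherited.

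Next, I would verify the universal property. Let $H'\in\mathrm{HAlg}_{\cV}$ be any Hopf algebra measuring $\cM$. By Corollary \ref{cor:isbialg}, the underlying bialgebra of $H'$ admits a unique bialgebra morphism $H'\to B$ compatible with the measurings. Applying the right adjoint $\mathrm{Bialg}_{\cV}\to\mathrm{HAlg}_{\cV}$ (equivalently, using the universal property of $H\to B$) lifts this uniquely to a Hopf algebra morphism $H'\to H$, and uniqueness on the bialgebra level forces uniqueness here. The compatibility of $H'\to H$ with the measuring morphisms again follows by composing with $H\to B$ and invoking the universal property of $B$.

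The only step that is not entirely formal is verifying that the two universal constructions, the measuring bialgebra and the Hopf coreflection, can indeed be chained in our enriched setting; this is precisely the content of the hypotheses imported from \cite[Proposition 47 and Theorem 54 1.]{prst-ft1} via Subsection \ref{subse:enr}, which guarantee both the existence of $M(\cM)$ (already used in Theorem \ref{th:univcoalgv}) and of the Hopf envelope in the same category $\cV$. Once this is in place, the argument is simply the composite of two right adjoints: $\mathrm{HAlg}_{\cV}\to\mathrm{Bialg}_{\cV}\to\{\text{coalgebras measuring }\cM\}$, whose terminal object lies in $\mathrm{HAlg}_{\cV}$ by construction.
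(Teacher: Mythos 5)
Your proposal is correct and matches the paper's own proof: both take the universal measuring bialgebra $B(\cM)$ from Corollary \ref{cor:isbialg} and then apply the Hopf coreflection $\mathrm{HAlg}_{\cV}\to\mathrm{Bialg}_{\cV}$ guaranteed by \cite[Theorem 54 1.]{prst-ft1} to obtain the universal measuring Hopf algebra. The paper states this in two sentences; your additional verifications (that the Hopf envelope still measures, and that the universal property chains through) are exactly the routine checks the paper leaves implicit.
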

\begin{proof}
  Consider the measuring bialgebra $B(\cM)$ attached to the multispan $\cM$ per Corollary \ref{cor:isbialg}, and then construct the corresponding Hopf algebra mapping universally to it:
  \begin{equation*}
    H(\cM)\to B(\cM). 
  \end{equation*}
  As noted while setting up the present framework, \cite[Theorem 54 1.]{prst-ft1} ensures that such a Hopf algebra exists, finishing the proof.
\end{proof}

\section{Quantum Galois group - definition and existence}\label{section-def-exist}

In this section, we define what we call the quantum Galois group of an inclusion of $\2$-factors. Examples are given in the next section. 

\subsection{Generalities on subfactors} 
For the convenience of the reader, we recall some of the terminology and results from subfactor theory. Some references are \cites{ekquantsymm,ghj,pimsnerpopa,takesaki3,jonesindex}.

\medskip\noindent

Let $M$ be a finite von Neumann algebra with a fixed normal faithful trace $\t$, $\t(1)=1$. The Hilbert norm on $M$ given by $\t$ is denoted as $\|x\|_2=\t(x^*x)^{1/2}$ and let $L^2(M,\t)$ be the completion of $M$ in this norm. Thus $L^2(M,\t)$ is the Hilbert space of the GNS representation of $M$, given by $\t$, and $M$ acts on $L^2(M,\t)$ by left multiplication. This representation is called the standard representation. The canonical conjugation on $L^2(M,\t)$ is denoted by $J$. It acts on the dense subspace $M \subset L^2(M,\t)$ by $Jx=x^*$. Then $J$ satisfies $JMJ=M'$ and in fact $JxJ$ is the operator of multiplication on the right with $x^*$: $JxJ(y)=yx^*$, $y \in M \subset L^2(M,\t)$.

\medskip\noindent

If $N \subset M$ is a von Neumann subalgebra with the same unit, then $E_N$ denotes the unique $\t$-preserving conditional expectation of $M$ onto $N$. $E_N$ is in fact the restriction to $M$ of the orthogonal projection of $L^2(M,\t)$ onto $L^2(N,\t)$, $L^2(N,\t)$ being the closure of $N$ in $L^2(M,\t)$. This orthogonal projection is denoted by $e_N$. We list some properties of $e_N$.

\medskip\noindent

\begin{enumerate}
    \item $e_Nxe_N=E_N(x)$, for $x \in M$.
    \item If $x \in M$ then $x \in N$ if and only if $e_Nx=xe_N$.
    \item $N'=(M \cup \{e_N\})''$.
    \item $J$ commutes with $e_N$.
\end{enumerate}

\medskip\noindent

Let $M_1$ denote the von Neumann algebra on $L^2(M,\t)$ generated by $M$ and $e_N$. It follows that $M_1=JN'J$. $M_1$ is called the basic construction for $N \subset M$. We record two properties of $M_1$.

\medskip\noindent

\begin{enumerate}
    \item[(5)] $M_1$ is a factor if and only if $N$ is. 
    \item[(6)] $M_1$ is finite if and only if $N'$ is.
\end{enumerate}

\medskip\noindent

If $M$ is a finite factor and acts on the Hilbert space $H$, the Murray-von Neumann coupling constant $\mathrm{dim}_M H$ is defined as $\t([M'\xi])/\t'([M\xi])$, where $0\neq \xi \in H$ and for $A\subset B(H)$ a von Neumann algebra $[A\xi]$ is the orthogonal projection onto $\ol{A\xi}$. The definition is independent of $\xi \neq 0$. For a pair of finite factors $N \subset M$, the Jones index of $N$ inside $M$, denoted $[M:N]$, is defined to be the number $\mathrm{dim}_N L^2(M,\t)$. Some properties of the Jones index are listed below.

\medskip\noindent

\begin{enumerate}
    \item[(7)] $[M:M]=1$. If $N\subset P \subset M$ then $[M:P][P:N]=[M:N]$.
    \item[(8)] If $[M:N] < \infty$ then $N' \cap M$ is finite dimensional.
    \item[(9)] If $[M:N] < \infty$ then $M_1$ is a finite factor and the canonical trace on $M_1$, say $\t_1$, has the Markov property: \[\t_1(e_Nx)=\frac{1}{[M:N]}\t(x) \quad \forall x \in M.\]
    \item[(10)] $[M_1:M]=[M:N]$.
\end{enumerate}

\medskip\noindent

We now recall a theorem due to Pimsner and Popa.

\begin{thm}
\label{pp basis}
Let $N \subset M$ be a pair of finite factors with $[M:N] < \infty$.Then
\begin{enumerate}
\item As a right module over $N$, the algebra $M$ is projective of finite type.
\item $M_1=\{\sum_{j=1}^na_je_Nb_j \mid n \geq 1, a_j,b_j \in M\}$.
\item If $\a: M \to M$ is a right $N$-module map, then $\a$ extends uniquely to an element of $M_1$ on $L^2(M,\t)$.
\item If $x \in M_1$ then $x(M)\subset M$, where $M$ is viewed as a dense subspace of $L^2(M,\t)$.
\end{enumerate}
\end{thm}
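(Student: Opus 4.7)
The plan is to reduce everything to the existence of a \emph{Pimsner-Popa basis}: a finite family $\{m_i\}_{i=1}^{n} \subset M$ satisfying
\[
\sum_{i=1}^{n} m_i e_N m_i^* = 1
\]
in $M_1$. Producing such a family is the main technical step, and I expect it to be the principal obstacle. The key inputs are properties (9) and (10) from the list preceding the theorem: $M_1$ is a finite factor, its canonical trace $\tau_1$ satisfies $\tau_1(e_N) = 1/[M:N]$, and $[M_1:M]=[M:N]$. Murray-von Neumann comparison inside the finite factor $M_1$ then lets one decompose $1 \in M_1$ as a finite sum of mutually orthogonal projections each equivalent to a subprojection of $e_N$; the crucial point is that the implementing partial isometries $v_i$ can be chosen of the form $v_i = m_i e_N$ with $m_i \in M$, because of the basic-construction identity $M_1 e_N = M e_N$. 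The identity $\sum_i m_i e_N m_i^* = 1$ results by expanding $1 = \sum_i v_i v_i^*$.

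With the basis in hand, assertions (1) and (3) follow quickly. Applying the operator identity $\sum_i m_i e_N m_i^* = 1$ to $x \in M \subset L^2(M,\tau)$ and using that $e_N$ restricts to $E_N$ on $M$ yields the \emph{reconstruction formula}
\[
x = \sum_i m_i E_N(m_i^* x), \quad x \in M,
\]
which exhibits $\{m_i\}$ together with dual $N$-linear functionals $\varphi_i(x) := E_N(m_i^* x)$ as witnessing the projectivity of $M$ over $N$, proving (1). For (3), given a right $N$-module map $\alpha : M \to M$, I would set $\tilde\alpha := \sum_i \alpha(m_i) e_N m_i^* \in M_1$; the reconstruction formula and right $N$-linearity of $\alpha$ give $\tilde\alpha(x) = \alpha(x)$ for $x \in M$, and uniqueness is forced by the density of $M$ in $L^2(M,\tau)$ together with the boundedness of any element of $M_1$.

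For assertion (2), the containment $\supseteq$ is immediate. For the reverse inclusion, the auxiliary fact I would use is $e_N M_1 e_N = N e_N$, proved by transporting through $M_1 = J N' J$: conjugation by $J$ reduces the question to identifying $e_N N' e_N$, whose range lies inside the commutant of $N$ acting on $L^2(N,\tau)$ and is naturally identified with $N \cdot e_N$. Granted this, any $Y \in M_1$ satisfies
\[
Y = \Bigl(\sum_i m_i e_N m_i^*\Bigr) Y \Bigl(\sum_j m_j e_N m_j^*\Bigr) = \sum_{i,j} m_i \bigl(e_N m_i^* Y m_j e_N\bigr) m_j^* = \sum_{i,j} m_i y_{ij} e_N m_j^*,
\]
with $y_{ij} \in N$, which is of the required form. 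Finally, assertion (4) is a one-line consequence of the presentation just obtained: for $Y = \sum_j a_j e_N b_j$ and $\xi \in M$, one computes $Y\xi = \sum_j a_j E_N(b_j \xi) \in M$.
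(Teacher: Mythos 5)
The paper does not prove this theorem at all: it is explicitly \emph{recalled} from Pimsner--Popa \cite{pimsnerpopa} (see also \cite{jonesindex}), so there is no internal proof to compare against. Your argument is correct and is, in substance, the standard one from that literature: produce a Pimsner--Popa basis by Murray--von Neumann comparison of $1$ with copies of $e_N$ inside the finite factor $M_1$ (using $\tau_1(e_N)=1/[M:N]>0$), then derive the reconstruction formula $x=\sum_i m_iE_N(m_i^*x)$ and read off (1), (3) and (4), with (2) following from $e_NM_1e_N=Ne_N$ by sandwiching $Y=1\cdot Y\cdot 1$. The only places where your sketch leans on unproved inputs are the two auxiliary identities $M_1e_N=Me_N$ and $e_NM_1e_N=Ne_N$; both are genuine lemmas rather than formal consequences of the basic construction (the first is usually established via the Markov property, showing $xe_N=[M:N]\,E_M(xe_N)\,e_N$ for $x\in M_1$, and the second either by your $J$-conjugation argument or by checking on the weakly dense span of $ae_Nb$), but the routes you indicate for them are sound. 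One small caution on the second: $e_NN'e_N$ restricted to $L^2(N,\tau)$ is the algebra of \emph{right} multiplications $J_NNJ_Ne_N$, and it is only after conjugating by $J$ that one lands on $Ne_N$; your phrasing elides this step but the conclusion is right. In short, the proposal is a correct reconstruction of the classical proof that the paper outsources to its references.
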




As a corollary, one has

\begin{cor}\label{end iso}
Let $N \subset M$ be a pair of finite factors with $[M:N] < \infty$. Then
\begin{enumerate}
    \item $\en(M_N) \cong M_1$ as $\bC$-algebras, and 
    \item $M \tens_N M \isom M_1$ as $(N,N)$-bimodules.
\end{enumerate} 
\end{cor}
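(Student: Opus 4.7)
The plan is to prove both parts using Theorem \ref{pp basis} together with the identification $M_1 = JN'J$ recorded earlier.

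For part (1), I would exhibit mutually inverse algebra homomorphisms. Since the right action of $N$ on the dense subspace $M \subseteq L^2(M,\tau)$ is implemented by $JNJ$ (from the formula $JxJ(y)=yx^*$), and $M_1 = JN'J$ is the commutant thereof, every $x \in M_1$ commutes with the right $N$-action on $M$. Item (4) of Theorem \ref{pp basis} then shows that $x$ preserves $M$, so restriction defines an algebra map $\Phi : M_1 \to \en(M_N)$. Conversely, item (3) provides an assignment $\en(M_N)\to M_1$ by extending each right $N$-module endomorphism to a bounded operator on $L^2(M,\tau)$ lying in $M_1$; uniqueness of this extension (forced by density of $M$ in $L^2(M,\tau)$) makes the two maps mutually inverse algebra homomorphisms.

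For part (2), I would define
\[
\Psi : M \tens_N M \longrightarrow M_1, \qquad a \tens b \longmapsto a e_N b.
\]
This is well-defined on the $N$-balanced tensor product because $e_N$ commutes with $N$ (property (2) of $e_N$), so that $an e_N b = a e_N nb$; and it is manifestly an $(M,M)$-bimodule map, hence in particular an $(N,N)$-bimodule map. Surjectivity is precisely item (2) of Theorem \ref{pp basis}.

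The main obstacle is injectivity of $\Psi$, and the plan is to dispatch it via a Pimsner--Popa basis argument, combining part (1) with item (1) of Theorem \ref{pp basis}. Pick a Pimsner--Popa basis $\{m_i\}$, so that $\sum_i m_i e_N m_i^* = 1$ in $M_1$ and hence $y = \sum_i m_i E_N(m_i^* y)$ for every $y \in M$. Under the identification from part (1), $\Psi(a\tens b)$ acts on $y \in M \subset L^2(M,\tau)$ by $a e_N(by) = a E_N(by)$. I would then verify that the assignment
\[
\en(M_N) \longrightarrow M \tens_N M, \qquad \alpha \longmapsto \sum_i \alpha(m_i) \tens m_i^*
\]
is a two-sided inverse to $\Psi$: evaluating $\Psi\bigl(\sum_i \alpha(m_i)\tens m_i^*\bigr)$ on $y$ recovers $\sum_i \alpha(m_i) E_N(m_i^* y) = \alpha(y)$ by the dual-basis identity, while the other composite is likewise checked by expanding the first tensorand using the same identity. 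This gives the $(N,N)$-bimodule isomorphism $M\tens_N M \cong M_1$.
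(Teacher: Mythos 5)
Your argument is correct and is exactly the intended derivation: the paper states this result without proof as an immediate corollary of Theorem \ref{pp basis}, and your use of items (3)--(4) for the identification $\en(M_N)\cong M_1=JN'J$ together with the Pimsner--Popa basis to invert $a\otimes b\mapsto ae_Nb$ is the standard way to fill in those details. (The only cosmetic point: in the composite $(\text{inverse})\circ\Psi$ one moves $E_N(bm_i)\in N$ across the tensor sign and applies the adjointed dual-basis identity $b=\sum_i E_N(bm_i)m_i^*$ to the \emph{second} tensorand.)
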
 

We have the following proposition.

\begin{prop}\label{fd}
Let $N \subset M$ be a pair of finite index $\2$ factors. Then $\en({}_N M_N)$ is finite dimensional.
\end{prop}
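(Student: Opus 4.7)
My plan is to identify $\en({}_N M_N)$ with a relative commutant inside the basic construction, which will then be finite-dimensional by property (8) from the list above.

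First, I would use Corollary \ref{end iso}(1) to identify $\en(M_N) \cong M_1$ via the map sending a right $N$-linear endomorphism $\alpha$ to its unique extension to an element of $M_1$ acting on $L^2(M,\tau)$ (the existence/uniqueness being part of Theorem \ref{pp basis}). Under this identification, a right $N$-module endomorphism $\alpha$ lies in $\en({}_N M_N)$ precisely when it additionally satisfies $\alpha(nm) = n\alpha(m)$ for all $n \in N$, $m \in M$. Translating via $\alpha \leftrightarrow x \in M_1$, this says $x(nm) = n \cdot x(m)$ on the dense subspace $M \subset L^2(M,\tau)$, i.e., $xn = nx$ as operators on $L^2(M,\tau)$. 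Hence
\begin{equation*}
  \en({}_N M_N) \cong N' \cap M_1.
\end{equation*}

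Next I would invoke the tower structure. Since $N \subset M$ has finite Jones index and $N$ is a factor, properties (5), (6) and (9) ensure that $M_1$ is a finite factor. Moreover $N \subset M \subset M_1$ is a tower with $[M_1:N] = [M_1:M][M:N] = [M:N]^2 < \infty$ by properties (7) and (10). Now property (8), applied to the finite-index pair $N \subset M_1$, yields that $N' \cap M_1$ is finite-dimensional. Combining with the identification above completes the proof.

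The main conceptual step (and the only place where care is needed) is the identification $\en({}_N M_N) \cong N' \cap M_1$; once this is in place, finite-dimensionality is immediate from the standard finite-index tower machinery recalled in the preliminaries. No step here looks genuinely obstructive.
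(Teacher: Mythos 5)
Your proposal is correct and follows essentially the same route as the paper: the paper's proof likewise identifies $\en({}_N M_N)$ with $N'\cap M_1$ via Corollary \ref{end iso} and then cites properties (7), (8) and (10). You have simply spelled out the details the paper leaves implicit (the translation of left $N$-linearity into commutation with $N$, and the index bound $[M_1:N]=[M:N]^2$), and these details are all accurate.
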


\begin{proof}
By the above Corollary, $\en({}_N M_N)\cong N' \cap M_1$. The result follows from (7), (8) and (10) above.
\end{proof}

\subsection{The existence theorem} 
We now define the quantum Galois group of a pair of finite factors $N \subset M$.

\begin{defn}\label{def:qgal}
Let $N \subset M$ be a pair of finite factors. Let $\mathrm{C}(N\subset M)$ be the category whose 
\begin{itemize}
    \item objects are Hopf $\*$-algebras $Q$ admitting an action on $M$ making it a module $\*$-algebra such that $N \subset M^Q$, where $M^Q$ is the invariant subalgebra;
    \item morphisms between two objects, say $Q$ and $Q'$, are Hopf $\*$-algebra morphisms $\phi : Q \to Q'$ such that the following diagram commutes:
    \begin{equation}
        \begin{tikzcd}
            Q \tens M \arrow[r,"\phi \tens \id"]\arrow[dr] & Q' \tens M \arrow[d]\\
            & M
        \end{tikzcd}
    \end{equation}
\end{itemize} where the unadorned arrows are the respective actions. We define the quantum Galois group of the inclusion $N \subset M$ denoted $\QGal(N \subset M)$ to be a terminal object of the category $\mathrm{C}(N \subset M)$.
\end{defn}

By definition, $\QGal(N \subset M)$ is unique up to unique isomorphism. We also introduce the following. 

\begin{defn}
  Let $\mathrm{C}_{\t}(N\subset M)$ be the full subcategory of $\mathrm{C}(N\subset M)$ consisting of Hopf $\*$-algebras admitting a $\t$-preserving action on $M$. A terminal object in this category is denoted as $\QGal_{\t}(N\subset M)$.
\end{defn}

Existence is immediate as a consequence of Theorem \ref{th:univhpf} above:

\begin{thm}\label{th:ex}
  For any inclusion $N\subset M$ of finite factors the quantum Galois groups $\QGal(N\subset M)$ and $\QGal_{\t}(N\subset M)$ both exist.
\end{thm}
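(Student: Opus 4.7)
The proof is an almost mechanical application of Theorem~\ref{th:univhpf}: my plan is to exhibit each of the two quantum Galois groups as the universal measuring Hopf $*$-algebra attached to an explicit multispan in $\cV = \cC = \mathrm{Vect}^*_{\bC}$.

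For $\QGal(N \subset M)$, I would take the $*$-algebra $A = M$ together with its $*$-subalgebra $A' = N$, and assemble the multispan whose spans encode (i) the multiplication and unit of $M$, via Examples~\ref{ex:mult} and~\ref{ex:omega}, and (ii) the inclusion $N \hookrightarrow M$, via Example~\ref{ex:subalg} specialised to $B = M$ and $f = \id_N$. A $*$-coalgebra $C$ measures this multispan precisely when the induced map $C \otimes M \to M$ is a measuring in the classical sense, is $*$-compatible, and satisfies $c \cdot n = \varepsilon(c) n$ for all $n \in N$. The case of Corollary~\ref{cor:llst} covering a complex $*$-algebra with a fixed $*$-subalgebra produces a universal measuring $*$-coalgebra; Corollary~\ref{cor:isbialg} promotes it to a $*$-bialgebra, and Theorem~\ref{th:univhpf} to a Hopf $*$-algebra $H$. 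By construction $M$ is an $H$-module $*$-algebra with $N \subseteq M^H$, and the universal property asserts precisely that $H$ is terminal among Hopf $*$-algebras with this property.

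For $\QGal_{\t}(N \subset M)$, I would enlarge the multispan by one further span corresponding to the trace $\t : M \to \bC$ as an instance of Example~\ref{ex:omega}. A measuring respecting this extra span is precisely one satisfying $\t(c \cdot x) = \varepsilon(c)\,\t(x)$, i.e.\ a $\t$-preserving action. Corollary~\ref{cor:llst} --- now in the variant simultaneously accommodating a $*$-subalgebra and a functional --- combined with Corollary~\ref{cor:isbialg} and Theorem~\ref{th:univhpf}, then yields the desired universal Hopf $*$-algebra.

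The remaining check is that the universal objects so produced are actually terminal in the categories $\mathrm{C}(N\subset M)$ and $\mathrm{C}_{\t}(N\subset M)$ of Definition~\ref{def:qgal}. A morphism in $\mathrm{C}(N\subset M)$ is nothing but a Hopf $*$-algebra morphism $\phi : Q \to Q'$ intertwining the two measurings in the sense of diagram~\eqref{eq:commspan}, and the terminality in Theorem~\ref{th:univcoalgv} --- propagated through the coreflections to bialgebras and then to Hopf algebras --- delivers exactly the required unique $\phi$. The only potential wrinkle is the $*$-compatibility of the antipode demanded of Hopf $*$-algebras, but this is automatic because the whole construction is carried out internally to $\mathrm{Vect}^*_{\bC}$, where by the remark following Definition~\ref{def:*vect} an internal Hopf algebra is by definition a Hopf $*$-algebra. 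Accordingly, no substantive obstacle arises; the argument is essentially a dictionary translation between Definition~\ref{def:qgal} and the multispan framework of Section~\ref{subse:univ}.
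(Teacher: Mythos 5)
Your proposal is correct and follows exactly the route the paper takes: the paper's entire proof is the one-line observation that existence is immediate from Theorem~\ref{th:univhpf}, and what you have written is simply that argument unpacked (the choice of multispan in $\mathrm{Vect}^*_{\bC}$ encoding multiplication, unit, the inclusion $N\hookrightarrow M$, and --- for the $\tau$-preserving variant --- the trace functional, followed by Corollaries~\ref{cor:llst} and~\ref{cor:isbialg} and the Hopf coreflection). No discrepancy with the paper's proof.
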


\begin{cor}\label{min}
  Let $N \subset M$ be an irreducible pair of finite factors with $[M:N] < \infty$. Then the action of $\QGal_{\t}(N \subset M)$ on $M$ is minimal. Furthermore, the invariant subalgebra $M^{\QGal_{\t}(N \subset M)}$ is a factor with $[M:M^{\QGal_{\t}(N \subset M)}] < \infty$.
\end{cor}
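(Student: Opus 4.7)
Write $Q = \QGal_{\t}(N \subset M)$ and $P = M^Q$. Membership in the category $\mathrm{C}_{\t}(N\subset M)$ forces $N \subseteq P$, and from this single inclusion the minimality claim drops out: taking commutants inside $M$ reverses inclusions, so
\[
P' \cap M \;\subseteq\; N' \cap M \;=\; \bC
\]
by the irreducibility hypothesis. This is exactly the statement that the action is minimal.

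To then conclude that $P$ is a factor I need it to be a von Neumann subalgebra in the first place, which does not follow formally from the definition. The key observation is that because the action of $Q$ fixes $N$ pointwise and is multiplicative, every operator $\pi(h)\colon x\mapsto h\cdot x$ is an $(N,N)$-bimodule endomorphism of $M$, so $\pi$ factors as an algebra map $Q \to \en({}_N M_N)$. The target is finite-dimensional by Proposition \ref{fd}, hence the operators $\pi(h)-\varepsilon(h)\,\mathrm{id}_M$ span a finite-dimensional subspace of $\en({}_N M_N)$; picking a basis $T_1,\dots,T_n$ one gets
\[
P \;=\; \bigcap_{i=1}^{n}\bigl\{x\in M : T_i x = 0\bigr\}.
\]
Under Corollary \ref{end iso} each $T_i$ lies in $N'\cap M_1\subset B(L^2(M,\t))$ and therefore acts boundedly on $L^2(M,\t)$, so each kernel meets $M$ in a $\sigma$-strongly closed subspace; as $P$ is also a $*$-subalgebra (using $\varepsilon\circ S=\varepsilon$ and the $*$-compatibility of the action), this makes $P$ a von Neumann subalgebra. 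Its center sits inside $P'\cap M=\bC$, so $P$ is a factor.

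The finite-index assertion is then just the Jones-index tower law recalled as property (7): from $N\subseteq P\subseteq M$ one obtains $[M:N]=[M:P]\cdot[P:N]$, and finiteness of $[M:N]$ immediately forces $[M:P]<\infty$.

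The main technical obstacle I anticipate is the weak closedness of $P$ inside $M$: since $Q$ may well be infinite-dimensional and its elements act on $M$ only as abstract linear maps, there is a priori no reason the invariant set should be $\sigma$-strongly closed. The way through is precisely the finite-dimensionality of $\en({}_N M_N)$ from Proposition \ref{fd}, which together with Corollary \ref{end iso} routes the whole action through the finite-dimensional von Neumann algebra $N'\cap M_1$ and reduces the invariance condition to finitely many bounded-operator kernel conditions.
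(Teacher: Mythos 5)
Your argument is correct and follows the paper's own proof in its essentials: the paper likewise deduces both minimality and factoriality from the single chain $P'\cap P\subset P'\cap M\subset N'\cap M=\bC 1_M$ coming from $N\subset P\subset M$, with the index bound then being the tower law. The only difference is that you explicitly verify that $P=M^{\QGal_{\t}(N\subset M)}$ is a $\sigma$-weakly closed $*$-subalgebra by routing the action through the finite-dimensional algebra $\en({}_N M_N)\cong N'\cap M_1$ (Proposition \ref{fd} and Corollary \ref{end iso}) and expressing $P$ as a finite intersection of kernels of bounded operators --- a point the paper's two-line proof passes over in silence, and a welcome addition of rigor rather than a different route.
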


\begin{proof}
  Denote by $P$ the invariant subalgebra $M^{\QGal_{\t}(N \subset M)}$. Thus $N \subset P \subset M$ and therefore $P' \cap P \subset P' \cap M \subset N' \cap M=\bC 1_M$, whence the result follows.
\end{proof}

With Theorem \ref{th:ex} in hand, the rest of the paper is devoted to identifying these universal objects in various particular cases of interest.

\begin{prop}
  Let $N\subset M$ be a pair of finite factors with $[M:N] < \infty$ and let $H$ be a Hopf $\*$-algebra belonging to the category $\mathrm{C}(N\subset M)$. If the unique normalized trace $\t$ on $M$ is preserved under the $H$-action, i.e., $\t(h\cdot x)=\ve(h)\t(x)$, for all $x \in M$, then so is $\t_1$. Here, the $H$-action on $M_1$ is obtained from Corollary \ref{end iso}.
\end{prop}

\begin{proof}
  First observe that \[\t_1(\sum_ja_je_Nb_j)=\sum_j\t_1(e_Nb_ja_j)=\sum_j\frac{1}{[M:N]}\t(b_ja_j).\] The first equality follows from the traciality of $\t_1$. The second from the Markov property (9) above. So it suffices to check the claim for elements of the form $e_Nx$, where $x \in M$. Now,
\[
\begin{aligned}
\t_1(h \cdot (e_Nx))&=\t_1(e_N(h \cdot x))\\
&=\frac{1}{[M:N]}\t(h\cdot x)\\
&=\frac{1}{[M:N]}\ve(h)\t(x)\\
&=\ve(h)\t_1(e_Nx).
\end{aligned}
\] 
The first equality follows from the fact that $H$ acts trivially on $e_N$ which can be seen as follows. Each $h \in H$, viewed as a right $N$-linear endomorphism of $M$ extends uniquely to a bounded linear map on $L^2(M,\t)$ by (3) of Theorem \ref{pp basis}. Moreover, $L^2(N,\t)$ is an invariant subspace under such a map. The $H$-action is $\*$-preserving, hence the invariant subspace becomes reducing.
\end{proof}



Next, we restate Wang's \cite{wangqsymm} result on quantum symmetry groups of finite spaces in the action picture.

\begin{thm}\label{th:qfinsp}
  Let $(B,\tau)$ be a finite-dimensional $*$-algebra equipped with a state, and $Q_{aut}(B,\tau)$ its quantum automorphism group, as in \cite[Theorem 6.1]{wangqsymm}. Then $\QGal_{\t}(\bC\subset B)$ is isomorphic to the finite dual Hopf $*$-algebra $Q_{aut}(B,\tau)^{\circ}$.
\end{thm}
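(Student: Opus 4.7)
The plan is to identify $\QGal_{\t}(\bC\subset B)$ with $Q_{aut}(B,\t)^{\circ}$ by matching the two Hopf $*$-algebras' universal properties through the standard action/coaction duality available in finite dimensions. First I note that for any module-algebra action on $B$ the identity $h\cdot 1_B=\ve(h)\,1_B$ automatically yields $h\cdot z=\ve(h)\,z$ for every $z\in\bC$; hence the condition $\bC\subseteq B^Q$ in the definition of $\QGal_{\t}$ is vacuous, and $\QGal_{\t}(\bC\subset B)$ is simply the terminal Hopf $*$-algebra acting on $B$ as a $*$-module algebra with $\t$ invariant.

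To set up the duality, I would fix a linear basis $\{b_i\}$ of $B$ and, for any Hopf $*$-algebra $H$ acting on $B$ as above, look at the matrix coefficients $u_{ji}$ defined by $h\cdot b_i=\sum_j u_{ji}(h)\,b_j$. These are matrix coefficients of a finite-dimensional representation, so they vanish on the cofinite-codimension ideal $\ker(H\to\en(B))$ and consequently lie in the finite dual $H^{\circ}$. The module-algebra, $*$- and $\t$-compatibility of the action translate exactly into the relations that present Wang's $Q_{aut}(B,\t)$ on its canonical generators, producing a unique Hopf $*$-algebra morphism $Q_{aut}(B,\t)\to H^{\circ}$ sending Wang's universal generators to the $u_{ij}$.

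Taking $H=\QGal_{\t}(\bC\subset B)$, dualizing this morphism and precomposing with the canonical map $\QGal_{\t}\to\QGal_{\t}^{\circ\circ}$ yields a Hopf $*$-algebra morphism $\QGal_{\t}(\bC\subset B)\to Q_{aut}(B,\t)^{\circ}$. In the opposite direction, Wang's universal coaction $\alpha\colon B\to B\otimes Q_{aut}(B,\t)$ pairs through the evaluation $Q_{aut}(B,\t)^{\circ}\otimes Q_{aut}(B,\t)\to\bC$ to produce a canonical action $\phi\cdot b=(\mathrm{id}\otimes\phi)\alpha(b)$ of $Q_{aut}(B,\t)^{\circ}$ on $B$. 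This action inherits $*$- and $\t$-compatibility directly from $\alpha$, so $Q_{aut}(B,\t)^{\circ}$ sits in $\mathrm{C}_{\t}(\bC\subset B)$ and the universality of $\QGal_{\t}$ supplies a Hopf $*$-algebra morphism in the reverse direction. Uniqueness in the two universal properties then forces the pair to be mutually inverse.

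The main obstacle I anticipate is that Wang's universal property of $Q_{aut}(B,\t)$ in \cite{wangqsymm} is formulated within the category of CQG-algebras, whereas the coaction of $H^{\circ}$ produced from an $H$-action need not realize $H^{\circ}$ as a CQG-algebra. I would therefore need to reinterpret Wang's presentation of $Q_{aut}(B,\t)$ by generators and relations as a universal property in the full category of Hopf $*$-algebras; this should reduce to a direct inspection of the defining relations, which involve only the coalgebraic, $*$- and $\t$-compatibilities and so carry over verbatim, but it is the step where one must be careful that the universal property is applied in the correct ambient category.
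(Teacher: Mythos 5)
Your proposal is correct and follows essentially the same route as the paper: translate a $\tau$-preserving $H$-action into a coaction of $H^{\circ}$, invoke Wang's universality to obtain $Q_{aut}(B,\tau)\to H^{\circ}$, and then use the self-adjointness of the finite-dual functor (your dualize-and-precompose-with-$H\to H^{\circ\circ}$ step) to get $H\to Q_{aut}(B,\tau)^{\circ}$ naturally in $H$. The only substantive addition is the CQG-algebra-versus-Hopf-$*$-algebra caveat you flag at the end, which is a genuine subtlety that the paper's one-line ``this translates to a coaction'' elides; your observation that unitarity of the fundamental matrix follows from $\tau$-preservation and $*$-compatibility is the right way to resolve it.
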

\begin{proof}
  Let $H$ be a Hopf algebra acting in a $\tau$-preserving fashion. This translates to a $\tau$-preserving {\it co}action by $H^{\circ}$, i.e. a morphism
  \begin{equation*}
    Q_{aut}(B,\tau)\to H^{\circ}.
  \end{equation*}
  But in turn, because the finite dual is a self-adjoint contravariant functor on the category of Hopf $*$-algebras, this gives (naturally in $H$) a Hopf $*$-algebra morphism
  \begin{equation*}
    H\to Q_{aut}(B,\tau)^{\circ}. 
  \end{equation*}
  The naturality in $H$ means precisely that $Q_{aut}(B,\tau)^{\circ}$ has the requisite universality property, finishing the proof. 
\end{proof}

\begin{rem}
  The proof above has cognates in the literature; see e.g. \cite[Theorem 3.20]{agv}.
\end{rem}

Theorem \ref{th:qfinsp} together with Proposition \ref{fd} imply

\begin{prop}\label{existence}
  Let $N\subset M$ be a pair of finite factors with $[M:N] < \infty$ and let $H$ be a Hopf $\*$-algebra belonging to the category $\mathrm{C}(N\subset M)$. Moreover, assume that the unique normalized trace $\t$ on $M$ is preserved under the $H$-action.

  The induced $H$-action on $\en({}_N M_N)$ factors through the dual action of a Hopf $\*$-subalgebra of the dual $Q^\*_{aut}(\en({}_N M_N),\t_1)$ of $Q_{aut}(\en({}_N M_N), \t_1)$.
\end{prop}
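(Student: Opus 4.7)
The plan is to reduce the proposition to an immediate application of Theorem~\ref{th:qfinsp}, once it is verified that the induced $H$-action on $B := \en({}_N M_N)$ is well-defined, $\*$-preserving, and $\t_1$-preserving (on $B$), and once we have recorded that $B$ is finite-dimensional.

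First, I would invoke Proposition~\ref{fd} together with Corollary~\ref{end iso} to identify $B$ with $N'\cap M_1$ and to conclude that $B$ is finite-dimensional. Next, because $H$ fixes $N$ pointwise and is compatible with the $\*$-structure on $M$, the standard conjugation action $(h\cdot f)(m) = h_1 \cdot f(S(h_2)\cdot m)$ on $\en(M_N)$ preserves the property of being $N$-bilinear, so it restricts to an $H$-module $\*$-algebra action on $B$; equivalently, the $H$-action on $M_1$ obtained via Corollary~\ref{end iso} preserves the commutant $N'\cap M_1$. The preceding proposition then yields the $H$-invariance of $\t_1$ on $M_1$, and hence of its restriction to $B$.

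With these ingredients in place, Theorem~\ref{th:qfinsp} applied to the finite-dimensional pair $(B, \t_1|_B)$ identifies the universal Hopf $\*$-algebra acting on $B$ in a $\t_1$-preserving fashion as $Q^\*_{aut}(B, \t_1) = Q_{aut}(B,\t_1)^\circ$. By universality the given action furnishes a canonical Hopf $\*$-algebra morphism
\[
  \pi : H \longrightarrow Q^\*_{aut}(B, \t_1),
\]
through which the $H$-action on $B$ factors by construction. Setting $K := \pi(H)$, which is a Hopf $\*$-subalgebra of $Q^\*_{aut}(B,\t_1)$ in the induced structure, yields the asserted factorisation.

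The only substantive step is the restriction of the conjugation action to $B$; this is a routine Sweedler-calculus check relying on $h\cdot n = \ve(h)n$ for $n\in N$ applied on both sides, and I do not expect it to pose any real difficulty. Everything that follows, in particular the existence and uniqueness of $\pi$ and the fact that its image is a Hopf $\*$-subalgebra of the codomain, is purely formal and reads off Theorem~\ref{th:qfinsp}.
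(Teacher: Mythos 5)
Your proposal is correct and follows essentially the same route as the paper, whose own ``proof'' is just the one-line citation of Theorem~\ref{th:qfinsp} and Proposition~\ref{fd}; you supply exactly the implicit details (finite-dimensionality of $\en({}_NM_N)$ via Corollary~\ref{end iso}, restriction of the adjoint action to the $N$-bimodule endomorphisms using $h\cdot n=\ve(h)n$, $\t_1$-invariance from the preceding proposition, and then universality of $Q_{aut}(\en({}_NM_N),\t_1)^{\circ}$). The only point worth flagging is that your conjugation formula should be checked to agree with the action ``obtained from Corollary~\ref{end iso}'' (diagonal action on $M\otimes_NM$), which it does here because the trace-preserving hypothesis makes the action commute with $E_N$.
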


From the above Proposition, it is possible to extract another proof of Theorem \ref{th:ex} as follows. One applies Zorn's lemma to the poset of Hopf $*$-subalgebras of $Q^\*_{aut}(\en({}_N M_N),\t_1)$ that acts on $M$ (or equivalently, Hopf $*$-algebras acting on $\mathrm{End}({}_NM_N)$ via an action on $M$) which is nonempty (since it contains the trivial one dimensional Hopf algebra) and directed by inclusion. Hence we obtain a maximal element. This maximal element acts on $M$ and by the Proposition above, is the quantum Galois group $\QGal_{\t}(N\subset M)$.

The entire discussion can be phrased more succinctly, using the notion of Hopf action on an object in a monoidal category \cite[\S 3]{hopftensor}. We are grateful to the referee for pointing this out and suggesting the connection. 

\begin{thm}
  Let $N \subset M$ be a pair of finite factors with $[M:N] < \infty$.

  $\QGal(N\subset M)$ ($\QGal_{\t}(N\subset M)$) is isomorphic to the ($\tau$-preserving) quantum automorphism group of the algebra object $M$, in the monoidal category $\mathrm{Bimod}_{N-N}$.
\end{thm}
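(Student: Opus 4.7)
The plan is to match the two universal objects directly by unpacking what it means, in the sense of \cite[\S 3]{hopftensor}, for a Hopf $*$-algebra $Q$ to act on $M$ as an algebra object in the monoidal category $\mathrm{Bimod}_{N-N}$. That notion unfolds to the following data: a $\bC$-linear map $\triangleright : Q \tens M \to M$ that is $N$-$N$ bilinear as a morphism of $N$-$N$ bimodules (the target being $M$, and the source being $Q \tens_{\bC} M$ with bimodule structure inherited from $M$), and that satisfies the usual module $*$-algebra axioms with respect to the algebra structure of $M$ in $\mathrm{Bimod}_{N-N}$.

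The key point is the equivalence between the $N$-bilinearity of $\triangleright$ and the requirement that $\triangleright$ fix $N$ pointwise, given the module-algebra axioms. In the forward direction, if $h \triangleright n = \ve(h) n$ for all $h \in Q$ and $n \in N$, then iterating the module-algebra identity $h \triangleright (xy) = (h_1 \triangleright x)(h_2 \triangleright y)$ twice gives
\[h \triangleright (n x n') = (h_1 \triangleright n)(h_2 \triangleright x)(h_3 \triangleright n') = n\,(h \triangleright x)\,n'\]
for $n, n' \in N$ and $x \in M$, so $\triangleright$ is $N$-$N$ bilinear. Conversely, if $\triangleright$ is $N$-$N$ bilinear then, using the unit axiom $h \triangleright 1_M = \ve(h) 1_M$,
\[h \triangleright n = h \triangleright (n \cdot 1_M) = n \cdot (h \triangleright 1_M) = \ve(h) n\]
for $n \in N$, so $\triangleright$ fixes $N$ pointwise.

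This equivalence identifies the category of Hopf $*$-algebras acting on $M$ in $\mathrm{Bimod}_{N-N}$ with $\mathrm{C}(N \subset M)$ itself, so the respective terminal objects coincide up to canonical isomorphism. The $\t$-preserving variant is handled in exactly the same way: one simply imposes the additional trace-preservation constraint on both sides of the correspondence, yielding the identification of $\QGal_{\t}(N \subset M)$ with the $\t$-preserving quantum automorphism group.

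The main obstacle is the initial translation itself, in two complementary senses: correctly interpreting \cite[\S 3]{hopftensor} in our $*$-setting (i.e. replacing $\mathrm{Vect}$ by $\mathrm{Vect}^*_{\bC}$ and requiring $*$-compatible actions), and verifying that the tensoring of $\mathrm{Bimod}_{N-N}$ over $\mathrm{Vect}^*_{\bC}$ is indeed the obvious one described above (so that ``action in the categorical sense'' reduces to ``$N$-bilinear action in the usual sense''). Once these categorical preliminaries are spelled out, the identification of the two universal properties reduces to the short Sweedler-notation calculation given above.
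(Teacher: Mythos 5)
Your proposal is correct and follows essentially the same route as the paper: both proofs identify the category of Hopf $*$-algebra actions on the algebra object $M\in\mathrm{Bimod}_{N-N}$ (in the sense of \cite[\S 3]{hopftensor}) with $\mathrm{C}(N\subset M)$ and then match terminal objects. Your explicit Sweedler-notation verification that $N$-bilinearity of the action map is equivalent to fixing $N$ pointwise is precisely the content the paper dismisses as ``virtually tautological,'' so you are simply supplying the details it omits.
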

\begin{proof}
  This is virtually tautological, given that $N$-fixing Hopf actions on $M$ can be phrased in terms of $N$-bimodule morphisms:
  \begin{itemize}
  \item Fixing $N$ pointwise means compatibility with the unit $N\to M$ the algebra object
    \begin{equation*}
      M\in \mathrm{Bimod}_{N-N}
    \end{equation*}
    in the sense of the bottom right-hand diagram in \cite[paragraph preceding Remark 4, p.75]{hopftensor}.
  \item Preserving the multiplication
    \begin{equation*}
      M\otimes_NM\to M
    \end{equation*}
    is expressible as the commutativity of the bottom left-hand diagram in the same display.
  \item Finally, in the $\tau$-preserving case, the compatibility of the action with $\tau$ can be rephrased as leaving invariant the $N$-bimodule map $M\to N$ induced by the canonical tracial states (i.e. the expectation of $M$ onto $N$).
  \end{itemize}
  Having thus translated $\mathrm{C}(N\subset M)$-structures into actions on algebra objects in $\mathrm{Bimod}_{N-N}$, the conclusion follows. 
\end{proof}

\section{Examples of quantum Galois groups}\label{section-examples}
In this section, we compute the quantum Galois group of some of the generic examples of finite factor inclusions.

\subsection{Inclusions arising from crossed products}
\label{subseccrossed}
One of the standard ways of producing a finite-index pair of finite factors from a given one, say $M$ is by taking crossed product with an outer action of a finite dimensional Hopf $C^*$-algebra $H$. It is also well-known that such a pair $M \subset M\rtimes H$ is a generic example of an irreducible ``depth-2'' inclusion. More precisely, let $N\subset M$ be a pair of finite factors with $[M:N] < \infty$ and $N'\cap M=\bC 1_M$. Then, by \cite{szymcrossed}, there exists a finite dimensional Hopf $C^*$-algebra $H$ which acts on $N$ such that $M=N\rtimes H$. We compute the quantum Galois group of such an inclusion in this subsection. We work in a bit more general situation as it needs no extra effort.

\medskip\noindent 

Let $H$ be a (not necessarily finite dimensional) Hopf algebra and $A$ be an $H$-module algebra. Denote by $A \rtimes H$ the smash product of $A$ by $H$. We recall the following lemma.

\begin{lem}
Let $V \in \Hom_{\bC}(H, A \rtimes H)$ be the map
\begin{equation}
    V(h)=1\rtimes h.
\end{equation} 
Then $V$ is convolution invertible and ``innerifies" the $H$-action, i.e.,
\begin{equation}
    h\cdot x \rtimes 1=V(h_1)(x\rtimes 1)V^{-1}(h_2),
\end{equation}
where $h \in H$, $x \in A$, $\D h=h_1 \tens h_2.$
\end{lem}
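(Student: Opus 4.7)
The plan is to observe that $V$ is in fact an algebra homomorphism, so that standard Hopf-algebraic facts give the convolution inverse automatically, and then to verify the inner-ification formula by a direct Sweedler computation inside the smash product.

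First, I would point out that using the multiplication rule of the smash product $(a\rtimes g)(b\rtimes k)=a(g_{(1)}\cdot b)\rtimes g_{(2)}k$ with $a=b=1$, one immediately gets $(1\rtimes h)(1\rtimes k) = (h_{(1)}\cdot 1)\rtimes h_{(2)}k = \varepsilon(h_{(1)})\rtimes h_{(2)}k = 1\rtimes hk$, so $V$ is a unital algebra homomorphism. From this it follows by a general principle (valid for any unital algebra map $V\colon H\to B$ out of a Hopf algebra) that $V^{-1}:=V\circ S$, i.e. $V^{-1}(h)=1\rtimes S(h)$, is a two-sided convolution inverse: indeed
\begin{equation*}
(V*V^{-1})(h)=V(h_1)V(S(h_2))=V(h_1S(h_2))=V(\varepsilon(h)1_H)=\varepsilon(h)1_{A\rtimes H},
\end{equation*}
and analogously for $V^{-1}*V$. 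This disposes of the convolution-invertibility part.

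Second, for the inner-ification identity, I would compute the right-hand side directly. Writing everything inside $A\rtimes H$ and using the basic commutation $(1\rtimes h)(x\rtimes 1)=(h_{(1)}\cdot x)\rtimes h_{(2)}$ from the smash-product rule, and then applying coassociativity to expand $\Delta h$ into $h_1\otimes h_2\otimes h_3$, I obtain
\begin{equation*}
V(h_1)(x\rtimes 1)V^{-1}(h_2)=(1\rtimes h_1)(x\rtimes 1)(1\rtimes S(h_3))=\bigl((h_1\cdot x)\rtimes h_2\bigr)(1\rtimes S(h_3)).
\end{equation*}
Since $(a\rtimes g)(1\rtimes k)=a\rtimes gk$, this equals $(h_1\cdot x)\rtimes h_2S(h_3)$. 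Grouping the rightmost two factors via coassociativity and applying the antipode axiom $\sum y_{(1)}S(y_{(2)})=\varepsilon(y)1$ (with $y$ the last piece), the tensor simplifies to $(h_1\cdot x)\rtimes \varepsilon(h_2)1_H=(h\cdot x)\rtimes 1$, as required.

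There is no real obstacle here; the argument is a routine Sweedler calculation. The only minor care needed is the passage between two- and three-fold comultiplications via coassociativity when simplifying $h_2S(h_3)$, which must be treated as a summation over the last two factors of $\Delta^{(2)}(h)$ rather than as a pointwise identity.
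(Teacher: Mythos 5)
Your proposal is correct and follows essentially the same route as the paper: the same candidate inverse $V^{-1}(h)=1\rtimes S(h)$, and the same Sweedler computation $(h_1\cdot x)\rtimes h_2S(h_3)=(h\cdot x)\rtimes 1$ for the inner-ification identity. The only (harmless) difference is that you first record that $V$ is a unital algebra map and invoke the general fact that $V\circ S$ is then a convolution inverse, whereas the paper verifies $V*V^{-1}=\varepsilon(\cdot)1$ by the same smash-product expansion written out directly.
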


\begin{proof}
This is standard. But we nevertheless check the details. Let $V^{-1} : H \to A\rtimes H$ be defined as $V^{-1}(h)=1\rtimes S(h)$. Then we compute
\[
\begin{aligned}
VV^{-1}(h)&=V(h_1)V^{-1}(h_2)\\
&=(1 \rtimes h_1)(1 \rtimes S(h_2))\\
&=h_1\cdot 1\rtimes h_2S(h_3)\\
&=\ve(h_1)\rtimes \ve(h_2)\\
&=1\rtimes \ve(h)1.
\end{aligned}
\] The other equality can be proven similarly. Now, for $x \in A$,

\[
\begin{aligned}
V(h_1)(x\rtimes 1)V^{-1}(h_2)&=(1\rtimes h_1)(x\rtimes 1)(1\rtimes S(h_2))\\
&=(h_1\cdot x \rtimes h_2)(1\rtimes S(h_3))\\
&=(h_1 \cdot x)(h_2 \cdot 1)\rtimes h_3S(h_4)\\
&=h_1 \cdot x \rtimes h_2S(h_3)\\
&=h_1 \cdot x\rtimes \ve(h_2)\\
&=h\cdot x\rtimes 1.
\end{aligned}
\]
\end{proof}

Let $Q$ be a Hopf algebra such that $A \rtimes H$ is $Q$-module algebra and $A \subset (A \rtimes H)^{Q}$, where $(A \rtimes H)^{Q}$ is the invariant subalgebra. Such a Hopf algebra exists; for example, let $\widehat{H}$ be a Hopf algebra in duality with $H$ via
\begin{equation*}
  \<-,-\> : \widehat{H} \tens H \to \bC
\end{equation*}
(see the discussion at the beginning of Section \ref{se.prel}). For $u \in \widehat{H}$, $x \in A$ and $h \in H$, define
\begin{equation}
    u \cdot (x \rtimes h)=x \rtimes (u \harp h),
\end{equation}
where $u \harp h=h_1\<u,h_2\>$. Then it is clear that the $\widehat{H}$-action is one such example. What we show below is that this example is the universal example, under certain conditions. By universality we mean that there should exist a Hopf algebra morphism $\phi: Q \to \widehat{H}$ such that the following diagram commutes:
\begin{equation}\label{eq:qh}
  \begin{tikzcd}
    Q \tens (A\rtimes H) \arrow[dr] \arrow[r, "\phi \tens 1"] & \widehat{H} \tens (A \rtimes H) \arrow[d] \\
    & A \rtimes H
  \end{tikzcd}        
\end{equation} 
\noindent
Observe that, a necessary condition for this to happen is that for $q \in Q$, $h \in H$,
\begin{equation}
    q \cdot (1\rtimes h)=\phi(q)\cdot (1\rtimes h)=1\rtimes h_1\<\phi(q),h_2\>.
\end{equation} 
That is $Q$ takes $H$ into $H$ in a very special way. We first achieve this.

\begin{prop} \label{cent}
Let $q \in Q$, thought of as a map from $H \to A\rtimes H$, $h \mto q \cdot(1\rtimes h)$. Then for each $h \in H$,
\begin{equation}
    V^{-1}q(h) \in A' \cap (A\rtimes H),
\end{equation}
where $V^{-1}q$ is the convolution product, $A' \cap (A \rtimes H)$ is the commutant of $A$ in $A \rtimes H$.
\end{prop}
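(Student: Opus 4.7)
The strategy is to first exploit the hypothesis $A \subset (A \rtimes H)^Q$ to establish that $Q$ commutes past elements of $A$, and then use the innerifying property of $V$ from the preceding lemma to collapse a convolution.

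\medskip

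\emph{Step 1: $Q$-invariance of $A$ yields ``$A$-linearity'' of the $Q$-action.} For any $a \in A$ and $x \in A \rtimes H$, the module-algebra axiom gives
\begin{equation*}
  q \cdot (ax) = (q_{(1)} \cdot a)(q_{(2)} \cdot x) = \varepsilon(q_{(1)}) a (q_{(2)} \cdot x) = a (q \cdot x),
\end{equation*}
using $q \cdot a = \varepsilon(q) a$ because $a \in A \subset (A\rtimes H)^Q$; symmetrically $q \cdot (xa) = (q \cdot x) a$.

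\medskip

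\emph{Step 2: a commutation relation for $q(h)$.} Since $(1 \rtimes h)(a \rtimes 1) = (h_{(1)} \cdot a) \rtimes h_{(2)} = (h_{(1)} \cdot a)(1 \rtimes h_{(2)})$ in $A \rtimes H$, and since $h_{(1)} \cdot a \in A$, Step 1 applied in both directions gives
\begin{equation*}
  q(h) \cdot a \;=\; q \cdot \bigl((1\rtimes h)(a\rtimes 1)\bigr) \;=\; q \cdot \bigl((h_{(1)} \cdot a)(1\rtimes h_{(2)})\bigr) \;=\; (h_{(1)} \cdot a)\, q(h_{(2)}).
\end{equation*}

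\medskip

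\emph{Step 3: innerify and convolve.} Substitute $h_{(1)} \cdot a = V(h_{(1)}) a V^{-1}(h_{(2)})$ (from the preceding lemma, with indices shifted appropriately), so that
\begin{equation*}
  q(h_{(3)}) a \;=\; V(h_{(1)}) \, a \, V^{-1}(h_{(2)}) \, q(h_{(3)}).
\end{equation*}
Now compute $(V^{-1} * q)(h) \cdot a = V^{-1}(h_{(1)}) q(h_{(2)}) a$ by applying the identity just derived (with $h$ replaced by $h_{(2)}$ and one further coproduct):
\begin{equation*}
  V^{-1}(h_{(1)})\, q(h_{(2)})\, a \;=\; V^{-1}(h_{(1)})\, V(h_{(2)})\, a\, V^{-1}(h_{(3)})\, q(h_{(4)}).
\end{equation*}
Finally, use the convolution identity $V^{-1} * V = \eta \circ \varepsilon$ on the pair $h_{(1)}, h_{(2)}$ (valid by coassociativity after regrouping): this collapses $V^{-1}(h_{(1)})V(h_{(2)})$ to $\varepsilon(h_{(1)})\cdot 1$, and the counit axiom renumbers the remaining factors to yield $a \, V^{-1}(h_{(1)}) q(h_{(2)}) = a\cdot (V^{-1} * q)(h)$.

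\medskip

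\emph{Anticipated difficulty.} There is no deep obstruction here; the content is purely formal. The only thing to be careful about is the Sweedler-index bookkeeping in Step 3, since one needs to invoke coassociativity twice (once to expand $q(h_{(2)}) a$ via Step 2, and once to apply $V^{-1} * V = \eta\varepsilon$ to the first two among four resulting tensorands). Writing the intermediate expressions using $\Delta^3 h = h_{(1)} \otimes h_{(2)} \otimes h_{(3)} \otimes h_{(4)}$ and reducing back via the counit is the cleanest way to carry this out.
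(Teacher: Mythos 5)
Your proof is correct and uses the same ingredients as the paper's own argument --- the $A$-linearity of the $Q$-action coming from $A \subset (A\rtimes H)^Q$ together with the module-algebra axiom, the smash-product commutation relation $(1\rtimes h)(a\rtimes 1) = (h_{(1)}\cdot a)(1\rtimes h_{(2)})$, and the convolution identity $V^{-1} * V = \eta\varepsilon$ from the preceding lemma --- merely running the computation from $(V^{-1}*q)(h)\,a$ to $a\,(V^{-1}*q)(h)$ rather than in the reverse direction as the paper does. (The left-hand side of the first display in Step 3 should read $q(h)\,a$ rather than $q(h_{(3)})\,a$; this is evidently a typo and does not affect the argument.)
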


\begin{proof}
Let $x \in A$ and $h \in H$. We compute
\[
\begin{aligned}
(x \rtimes 1)V^{-1}(h_1)q(h_2)&=V^{-1}(h_1)V(h_2)(x \rtimes 1)V^{-1}(h_3)q(h_4)\\
&=V^{-1}(h_1)(h_2\cdot x \rtimes 1)q(h_3)\\
&=V^{-1}(h_1)q \cdot ((h_2\cdot x \rtimes 1)(1 \rtimes h_3))\\
&=V^{-1}(h_1)q \cdot ((1\rtimes h_2)(x \rtimes 1))\\
&=V^{-1}(h_1)q(h_2)(x \rtimes 1).
\end{aligned}
\] The second equality follows from Lemma 1.1, the third and the fifth follow from the fact that $Q$ acts trivially on $A$. Therefore, we are done.
\end{proof}

This leads to

\begin{prop}\label{pr:clsf}
  Let $H$ be a Hopf algebra, $A$ an $H$-module algebra, and $A'$ the commutant of $A$ in $A\rtimes H$. We then have a linear space isomorphism
  \begin{equation*}
    \Hom(H,A') \cong \en({}_A A\rtimes H_A),
  \end{equation*}
  sending $\psi:H\to A'$ the unique $A$-bimodule endomorphism of $A\rtimes H$ sending $h\in H\subset A\rtimes H$ to $h_1\psi(h_2)$. 
\end{prop}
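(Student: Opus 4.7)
My plan is to translate $A$-bimodule endomorphisms of $A\rtimes H$ into linear maps $\phi\colon H\to A\rtimes H$ satisfying a cocycle condition, and then convolve with the inverse of the convolution-invertible map $V\colon h\mapsto 1\rtimes h$ to pass from $\phi$ to the desired $\psi\colon H\to A'$. Concretely, $A\rtimes H\cong A\otimes H$ is free as a left $A$-module with basis the canonical copy of $H$, so left $A$-linear endomorphisms $T\colon A\rtimes H\to A\rtimes H$ correspond bijectively to linear maps $\phi\colon H\to A\rtimes H$ via $\phi(h)=T(1\rtimes h)$; combining the smash-product identity $(1\rtimes h)(a\rtimes 1)=(h_1\cdot a)\rtimes h_2$ with the right $A$-linearity of $T$ then extracts the cocycle identity
\[
((h_1\cdot a)\rtimes 1)\,\phi(h_2)=\phi(h)(a\rtimes 1)\qquad \forall\, a\in A,\ h\in H,
\]
which I will refer to as $(\ast)$.

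Next, I set $\psi:=V^{-1}*\phi$, with convolution inverse $\phi=V*\psi$; these are mutually inverse linear bijections of $\mathrm{Hom}(H,A\rtimes H)$ since $V$ and $V^{-1}$ are convolution inverses by Lemma 1.1. Unpacking Sweedler notation gives $V*\psi(h)=(1\rtimes h_1)\psi(h_2)=h_1\psi(h_2)$, matching the formula in the statement.

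The final step is to verify that, under this bijection, $\psi$ lands in $A'$ if and only if $\phi$ satisfies $(\ast)$. Here the innerification formula $(h\cdot a)\rtimes 1=V(h_1)(a\rtimes 1)V^{-1}(h_2)$ of Lemma 1.1 does the work: starting from $\psi(h)(a\rtimes 1)=V^{-1}(h_1)\phi(h_2)(a\rtimes 1)$, I apply $(\ast)$ (with $h$ replaced by $h_2$) to push $a\rtimes 1$ past $\phi$, then invoke innerification to rewrite the resulting $(h_2\cdot a)\rtimes 1$ as $V(h_2)(a\rtimes 1)V^{-1}(h_3)$, and finally collapse $V^{-1}(h_1)V(h_2)=\varepsilon(h_1)1$ to arrive at $(a\rtimes 1)\psi(h)$. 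The same chain of identities, read in reverse, shows that when $\psi\in\mathrm{Hom}(H,A')$ the cocycle $(\ast)$ holds for $\phi:=V*\psi$. The main obstacle is mostly keeping Sweedler indices straight; conceptually the argument is already essentially present in Proposition \ref{cent}.
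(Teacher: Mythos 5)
Your proof is correct and takes essentially the same route as the paper's: the paper likewise recovers $\psi$ as the convolution $V^{-1}q|_H$ and appeals to the computation of Proposition \ref{cent} (which is precisely your chain of identities, with the intermediate cocycle condition $(\ast)$ left implicit), while the converse direction is the same direct check that $a\rtimes h\mapsto (a\rtimes h_1)\psi(h_2)$ is an $A$-bimodule map.
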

\begin{proof}
  Proposition \ref{cent} shows that every $A$-bimodule endomorphism $q$ of $A\rtimes H$ is uniquely of the desired form, by recovering $\psi:H\to A'$ as $V^{-1} q|_H$. On the other hand, the fact that for {\it every} $\psi:H\to A'$ the map
  \begin{equation*}
    a\rtimes h\mapsto (a\rtimes h_1)\psi(h_2)
  \end{equation*}
  is an $A$-bimodule endomorphism of $A\rtimes H$ is an immediate check.
\end{proof}

In particular, for outer actions we obtain

\begin{cor}\label{cor:irr}
  Let $A$ be a module-algebra over a Hopf algebra $H$, and assume the action is outer. Then, we have an algebra isomorphism
  \begin{equation*}
    H^*\cong \en({}_A A\rtimes H_A)
  \end{equation*}
  given by sending $\lambda\in H^*$ to
  \begin{equation*}
    A\rtimes H\ni a\rtimes h\mapsto a\rtimes h_1\lambda(h_2). 
  \end{equation*}
\end{cor}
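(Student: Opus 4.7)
The plan is to obtain Corollary \ref{cor:irr} as a direct specialization of Proposition \ref{pr:clsf}, followed by a short check that the algebra structures correspond as claimed.

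First, I would invoke Proposition \ref{pr:clsf}, which furnishes a linear isomorphism $\Hom(H,A')\cong \en({}_AA\rtimes H_A)$, with $A'$ denoting the commutant of $A$ inside $A\rtimes H$. Because the $H$-action on $A$ is assumed outer, by definition $A'=\bC\cdot 1$, so that $\Hom(H,A')$ collapses to $\Hom(H,\bC)=H^*$. Substituting $\psi(h)=\lambda(h)\cdot 1$ into the explicit formula from Proposition \ref{pr:clsf} yields exactly the map $a\rtimes h\mapsto a\rtimes h_1\lambda(h_2)$, so the underlying linear isomorphism is already established.

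It then remains to verify that this bijection intertwines composition on the endomorphism side with a natural algebra structure on $H^*$, namely the convolution product dual to $\Delta$. Let $\Phi:H^*\to \en({}_AA\rtimes H_A)$ denote the map in question; for $\lambda,\mu\in H^*$ and $a\rtimes h\in A\rtimes H$ one computes, using coassociativity,
\begin{equation*}
\Phi(\lambda)\Phi(\mu)(a\rtimes h)=\Phi(\lambda)(a\rtimes h_1\mu(h_2))=a\rtimes h_1\lambda(h_2)\mu(h_3)=a\rtimes h_1(\lambda*\mu)(h_2),
\end{equation*}
which equals $\Phi(\lambda*\mu)(a\rtimes h)$. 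Unitality ($\Phi(\varepsilon)=\id$) is immediate since $\varepsilon(h_2)h_1=h$. This completes the identification of $H^*$ (with its convolution product) and $\en({}_AA\rtimes H_A)$ (with composition) as algebras.

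I do not anticipate a genuine obstacle here: the content is packaged in Proposition \ref{pr:clsf}, and the outerness hypothesis enters only to trivialize the commutant $A'$. The only item not already spelled out in the preceding proposition is the compatibility with the algebra structures, and this is a one-line Sweedler computation.
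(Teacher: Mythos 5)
Your proposal is correct and matches the paper's (implicit) argument: the paper derives Corollary \ref{cor:irr} from Proposition \ref{pr:clsf} with no further proof, exactly by noting that outerness collapses $A'$ to the scalars so that $\Hom(H,A')$ becomes $H^*$. Your additional Sweedler computation verifying that composition of endomorphisms corresponds to the convolution product on $H^*$ is a worthwhile detail the paper leaves unstated, and it is carried out correctly.
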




Going back to the setting of (\ref{eq:qh}), Corollary \ref{cor:irr} says that for each $q \in Q$ and $h \in H$ there exists $\l_q(h) \in \bC$ such that $V^{-1}q(h)=\l_q(h)(1\rtimes 1)$. We can now define a pairing between $Q$ and $H$, from which universality follows automatically: set
\begin{equation}
    \<-,-\>: Q \tens H \to \bC, \quad \<q,h\>=\l_q(h)=(1 \rtimes \ve)(q \cdot (1\rtimes h)).
\end{equation}
We show that this defines a dual pairing.

\medskip

{\bf Step 1}: $\<qq',h\>=\<q \tens q, \D h\>=\<q,h_1\>\<q',h_2\>$ holds: For, by associativity, \[qq' \cdot (1\rtimes h)=q\cdot (1\rtimes h_1\l_{q'}(h_2))=1\rtimes h_1\l_q(h_2)\l_{q'}(h_3).\] Therefore \[\<qq',h\>=\ve(h_1)\l_q(h_2)\l_{q'}(h_3)=\l_q(h_1)\l_{q'}(h_2)=\<q,h_1\>\<q',h_2\>.\]

\medskip

{\bf Step 2}: $\<q,hh'\>=\<q_1,h\>\<q_2,h'\>$ holds: Since $A \rtimes H$ is a $Q$-module algebra, we have \[q \cdot (1\rtimes hh')=q_1 \cdot (1\rtimes h)q_2 \cdot (1\rtimes h').\] Now \[q\cdot (1\rtimes hh')=1\rtimes h_1h'_1\l_q(h_2h'_2)\] and \[q_1 \cdot (1\rtimes h)q_2 \cdot (1\rtimes h')=h_1\l_{q_1}(h_2)h'_1\l_{q_2}(h'_2)=h_1h'_1\l_{q_1}(h_2)\l_{q_2}(h'_2).\] Applying $\ve$ yields the result.

\medskip

{\bf Step 3}: $\<1,h\>=\ve(h)$ obviously holds.

\medskip

{\bf Step 4}: $\<q,1\>=\ve(q)$ too is obvious.

\medskip

{\bf Step 5}: $\<q,S(h)\>=\<S(q),h\>$ holds: Obviously, the pairing defines a bialgebra morphism from $Q \to \widehat{H}$ and since a bialgebra morphism is in fact a Hopf algebra morphism, we get the result. Nevertheless, we present an argument which uses only the pairing and which is itself pretty! For that, we first recall that the map
\begin{equation}
    H \tens H \to H \tens H, \quad x \tens y \mto x_1 \tens x_2y,
\end{equation}
is a bijection. Now, using this, for $h \in H$, find $x^i, y_i \in H$ such that $\sum_ix^i_1 \tens x^i_2y_i=h \tens 1$. We compute,
\allowdisplaybreaks{
\[
\begin{aligned}
\<S(q),h\>=\<S(q_1)\ve(q_2),h\>=\<S(q_1),h\>\<q_2,1\>
&=\sum_i\<S(q_1),x^i_1\>\<q_2,x^i_2y_i\>\\
&=\sum_i\<S(q_1),x^i_1\>\<q_2,x^i_2\>\<q_3,y_i\>\\
&=\sum_i\<S(q_1)q_2,x^i\>\<q_2,y_i\>\\
&=\sum_i\<\ve(q_1),x^i\>\<q_2,y_i\>\\
&=\sum_i\ve(q_1)\ve(x^i)\<q_2,y_i\>\\
&=\sum_i\<q,\ve(x^i)y_i\>\\
&=\sum_i\<q,S(x^i_1)x^i_2y_i\>
=\<q,S(h)\>.
\end{aligned}    
\]
}
Summarizing all these, we get

\begin{thm}\label{qcross}
Let $Q$ be a Hopf algebra such that $A\rtimes H$ is a $Q$-module algebra and $A \subset (A\rtimes H)^Q$, where $(A\rtimes H)^Q$ is the invariant subalgebra, that is $Q \in \mathrm{Obj}(\mathrm{C}(A \subset A\rtimes H))$. Moreover, suppose that the extension $A \to A\rtimes H$ is irreducible, i.e., $A' \cap (A\rtimes H)=\bC$. Then there exists a unique pairing 
\begin{equation}
    \<-,-\> : Q \tens H \to \bC, \quad q \tens h \mto \<q,h\>,
\end{equation}
such that
\begin{equation}
    q \cdot (a \rtimes h)=a\rtimes h_1\<q,h_2\>,
\end{equation}
for all $a \in A$, $h \in H$ and $q \in Q$.
\end{thm}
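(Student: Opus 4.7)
My plan is to extract the pairing directly from the classification of $A$-bimodule endomorphisms in Proposition~\ref{cent}, using irreducibility to collapse everything to scalars.

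First, I reduce the unknown $Q$-action on $A \rtimes H$ to its restriction to $V(h) = 1 \rtimes h$. The hypothesis $A \subseteq (A \rtimes H)^Q$ means $q' \cdot (a \rtimes 1) = \varepsilon(q')(a \rtimes 1)$ for every $q' \in Q$ and $a \in A$, so the $Q$-module-algebra axiom gives
\[ q \cdot (a \rtimes h) = (q_1 \cdot (a \rtimes 1))(q_2 \cdot (1 \rtimes h)) = (a \rtimes 1)\bigl(q \cdot (1 \rtimes h)\bigr), \]
and the entire action is determined by the single map $q \cdot V : H \to A \rtimes H$, $h \mapsto q \cdot (1 \rtimes h)$.

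Second, I apply Proposition~\ref{cent}, which says that $V^{-1} * (q \cdot V)$ takes values in $A' \cap (A \rtimes H)$. The irreducibility hypothesis forces this commutant to be $\mathbb{C} \cdot 1$, so there is a unique scalar $\langle q, h\rangle \in \mathbb{C}$ with $(V^{-1} * (q \cdot V))(h) = \langle q, h\rangle \cdot 1$. Convolving with $V$ on the left and using $V * V^{-1} = \eta \circ \varepsilon$ recovers
\[ q \cdot (1 \rtimes h) = V(h_1)\,\langle q, h_2\rangle = (1 \rtimes h_1)\,\langle q, h_2\rangle, \]
and combining this with the first reduction yields $q \cdot (a \rtimes h) = (a \rtimes h_1)\langle q, h_2\rangle$. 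Bilinearity of $\langle -, -\rangle$ is inherited from the action, and uniqueness is automatic: applying $\mathrm{id}_A \otimes \varepsilon : A \rtimes H \to A$ to the formula at $a = 1$ inverts the assignment, giving $\langle q, h\rangle = (\mathrm{id} \otimes \varepsilon)(q \cdot (1 \rtimes h))$.

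Beyond existence and uniqueness of the bilinear form, the substantive content of calling $\langle -, -\rangle$ a \emph{pairing} in the sense of Section~\ref{se.prel} is the Hopf compatibility that makes the induced $\phi : Q \to \widehat{H}$ a Hopf algebra morphism. Multiplicativity in each variable and counit compatibility follow from straightforward expansions of the $Q$-module-algebra axiom on $A \rtimes H$, together with the definition of the smash product. The one point I expect to be genuinely delicate is antipode preservation $\langle S(q), h\rangle = \langle q, S(h)\rangle$, which I would handle with the standard trick of writing $h \otimes 1 = \sum_i x^i_1 \otimes x^i_2 y_i$ via the bijection $x \otimes y \mapsto x_1 \otimes x_2 y$, and then collapsing the resulting expression using $S(q_1)q_2 = \varepsilon(q)\cdot 1$ together with the bialgebra identities already established.
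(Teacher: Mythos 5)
Your proposal is correct and follows essentially the same route as the paper: reduce the action to its values on $1\rtimes h$, invoke Proposition~\ref{cent} together with the irreducibility hypothesis $A'\cap(A\rtimes H)=\bC$ to produce the scalars $\<q,h\>$, recover the action formula by convolving with $V$, and verify the Hopf-pairing axioms, including the antipode compatibility via the bijection $x\otimes y\mapsto x_1\otimes x_2y$ exactly as in the paper's Step 5. The only cosmetic difference is that you cite Proposition~\ref{cent} where the paper routes the scalar-valuedness through Corollary~\ref{cor:irr}, but the underlying argument is identical.
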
 

It does not quite make sense to say that $\QGal(A \subset A\rtimes H)=H^*$: as $H$ is not assumed to be finite dimensional, we have stated everything in the language of Hopf pairings. The actual identification of $\QGal(A \subset A\rtimes H)$ requires a bit more work, as described below.

\medskip\noindent

Let $Q$ be a Hopf algebra such that $A\rtimes H$ is a $Q$-module algebra and $A \subset (A\rtimes H)^Q$, where $(A\rtimes H)^Q$ is the invariant subalgebra. Let $\phi$ be the composite $Q \to H^*$ obtained using the isomorphism from Corollary \ref{cor:irr}. Then the computation in {\bf Step 2} above implies that
\begin{equation}
  \phi(q)(hh')=\phi(q_1)(h)\phi(q_2)(h'), \quad \text{ i.e., } \quad m^*\phi(q) \in H^* \tens H^*,
\end{equation}
where $m$ is the multiplication of $H$, $m^*$ is the transpose. This says $\phi(q) \in H^{\circ}$, the finite or Sweedler dual of $H$. Rephrasing Theorem \ref{qcross} in this way, we get

\begin{thm}
  Let $Q$ be a Hopf algebra such that $A\rtimes H$ is a $Q$-module algebra and $A \subset (A\rtimes H)^Q$, where $(A\rtimes H)^Q$ is the invariant subalgebra. Moreover, suppose that the extension $A \to A\rtimes H$ is irreducible, i.e., $A' \cap (A\rtimes H)=\bC$. Then there exists 
  a unique Hopf algebra morphism $\phi: Q \to H^{\circ}$ such that the following diagram 
  \begin{equation}
    \begin{tikzcd}
      Q \tens (A\rtimes H) \arrow[dr] \arrow[r, "\phi \tens 1"] & H^{\circ} \tens (A \rtimes H) \arrow[d] \\
      & A \rtimes H
    \end{tikzcd}
  \end{equation}
  is commutative. 
  Thus $\QGal(A \subset A\rtimes H)=H^{\circ}$.
\end{thm}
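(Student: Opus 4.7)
The plan is to build $\phi$ directly from the pairing $\langle -, - \rangle : Q \otimes H \to \bC$ delivered by Theorem \ref{qcross}: define $\phi : Q \to H^*$ by $\phi(q)(h) := \langle q, h \rangle$. The crux is to show that $\phi(q)$ actually lies in the Sweedler dual $H^\circ$, and this turns out to be essentially tautological. By the standard characterization, $\phi(q) \in H^\circ$ iff $m^* \phi(q) \in H^* \otimes H^* \subset (H \otimes H)^*$, and the identity $\langle q, hh' \rangle = \langle q_1, h \rangle \langle q_2, h' \rangle$ (Step 2 of the proof of Theorem \ref{qcross}) reads precisely
\begin{equation*}
m^* \phi(q) = \sum_{(q)} \phi(q_1) \otimes \phi(q_2),
\end{equation*}
a finite tensor thanks to the Sweedler decomposition of $\Delta q$.

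The remaining pairing axioms (Steps 1, 3, 4, 5 in the proof of Theorem \ref{qcross}) package into the statement that $\phi : Q \to H^\circ$ is a morphism of Hopf algebras: multiplicativity, preservation of units and counits, and compatibility with antipodes. Commutativity of the diagram is then a direct comparison: the canonical action of $H^\circ$ sends $\phi(q) \otimes (a \rtimes h)$ to $a \rtimes h_1 \phi(q)(h_2) = a \rtimes h_1 \langle q, h_2 \rangle$, which coincides with $q \cdot (a \rtimes h)$ by Theorem \ref{qcross}. Uniqueness of $\phi$ comes from applying $1 \rtimes \ve$ to the diagram evaluated at $1 \rtimes h$: any candidate $\phi'$ making the diagram commute must satisfy $\phi'(q)(h) = (1 \rtimes \ve)(q \cdot (1 \rtimes h)) = \langle q, h \rangle$, forcing $\phi' = \phi$.

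Finally, for the identification $\QGal(A \subset A \rtimes H) = H^\circ$, one notes that $H^\circ$ itself is an object of $\mathrm{C}(A \subset A \rtimes H)$ via its canonical action $u \cdot (a \rtimes h) = a \rtimes h_1 u(h_2)$; this action fixes $A$ since $\Delta(1) = 1 \otimes 1$ gives $u \cdot (a \rtimes 1) = \ve(u)(a \rtimes 1)$. The existence-and-uniqueness statement just established then shows $H^\circ$ is a terminal object in $\mathrm{C}(A \subset A \rtimes H)$. The only real obstacle is conceptual rather than computational: recognising that the $Q$-comultiplicativity built into the pairing is exactly what guarantees $\phi$ takes values in $H^\circ$ rather than merely in $H^*$; everything else follows by bookkeeping from Theorem \ref{qcross} together with Corollary \ref{cor:irr}.
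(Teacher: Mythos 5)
Your proposal is correct and follows essentially the same route as the paper: the paper also defines $\phi(q)=\langle q,-\rangle$ via the pairing from Theorem \ref{qcross} (equivalently, through the isomorphism of Corollary \ref{cor:irr}) and invokes Step 2 to conclude $m^*\phi(q)\in H^*\otimes H^*$, hence $\phi(q)\in H^{\circ}$, with the remaining verifications being the same bookkeeping. No substantive differences.
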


We state the above theorem in the von Neumann algebra setting,
\begin{thm}\label{crossed}
Let $M$ be a finite factor admitting an outer action by a finite dimensional Hopf $C^*$-algebra $H$. Then $\QGal(M\subset M\rtimes H)=H^*$.
\end{thm}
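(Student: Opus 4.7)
The plan is to reduce Theorem \ref{crossed} to the immediately preceding theorem (identifying $\QGal(A\subset A\rtimes H)$ with $H^{\circ}$) by checking that every hypothesis translates to the von Neumann/$C^*$-setting.

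First I would set $A=M$ and note that an outer action of the finite-dimensional Hopf $C^*$-algebra $H$ on $M$ makes $M$ into an $H$-module $*$-algebra, so the smash product $M\rtimes H$ is defined and outerness is precisely the statement $M'\cap (M\rtimes H)=\bC$ required by the previous theorem. Since $\dim H<\infty$, the finite dual $H^{\circ}$ coincides with the full linear dual $H^{*}$, which in the Hopf $C^{*}$-algebra setting is itself a finite-dimensional Hopf $*$-algebra. The previous theorem then produces, for any $Q\in\mathrm{C}(M\subset M\rtimes H)$, a unique Hopf algebra morphism $\phi\colon Q\to H^{*}$ compatible with the two actions on $M\rtimes H$, which yields the universal property defining $\QGal(M\subset M\rtimes H)$.

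Second, I would verify that the canonical action of $H^{*}$ on $M\rtimes H$ given by $u\cdot(x\rtimes h)=x\rtimes h_{1}\langle u,h_{2}\rangle$ genuinely belongs to $\mathrm{C}(M\subset M\rtimes H)$, i.e.\ is a $*$-preserving action fixing $M$ pointwise. Fixing $M$ is immediate from the formula (take $h=1$ and use counitality). The $*$-compatibility reduces to the standard fact that for a Hopf $*$-algebra pairing one has $\langle u^{*},h\rangle=\overline{\langle u,S(h)^{*}\rangle}$, which is precisely the compatibility built into the dual Hopf $*$-algebra structure on $H^{*}$ for a Hopf $C^{*}$-algebra $H$.

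Third, I would check that the morphism $\phi\colon Q\to H^{*}$ produced by the previous theorem automatically preserves the $*$-structure. This is the only real verification beyond a direct quotation: $\phi(q)$ is defined by $\phi(q)(h)=(1\rtimes\varepsilon)(V^{-1}\ast q)(h)$ via Corollary \ref{cor:irr}, and one must track how conjugation by the unitary $V$ (which corresponds to the canonical implementation of $H$ inside $M\rtimes H$) interacts with the involution. Since $Q$ acts as $*$-preserving maps and $V(h)^{*}=V^{-1}(S(h)^{*})$ in the Hopf $C^*$ setting, a short computation shows $\phi(q^{*})=\phi(q)^{*}$.

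The main obstacle, such as it is, is this last bookkeeping step verifying the $*$-preservation of $\phi$; everything else is either a direct quotation of the preceding theorem or the standard identification $H^{\circ}=H^{*}$ in finite dimension. Once $\phi$ is shown to be a Hopf $*$-algebra morphism, the universal property is inherited from the purely algebraic version and the proof is complete.
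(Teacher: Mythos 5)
Your proposal is correct and follows essentially the same route as the paper, which presents Theorem \ref{crossed} as an immediate specialization of the preceding algebraic theorem (identifying $\QGal(A\subset A\rtimes H)$ with $H^{\circ}$) to the finite-dimensional case where $H^{\circ}=H^{*}$. The only addition is your explicit bookkeeping of the $*$-compatibility of $\phi$, which the paper leaves implicit; this is a harmless (and arguably welcome) elaboration rather than a different argument.
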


We end this subsection with a remark.

\begin{rem}
  We note that our computation did not require the preservation of the canonical trace. Since $H^*$ automatically preserves the canonical trace, the trace-preserving quantum Galois group is also identified with $H^*$.
\end{rem}

\subsection{The invariant subalgebra}\label{subse:inv}
Let $M$ be a finite factor admitting a saturated (\cite{szympeli}) and outer action of a finite dimensional Hopf $C^*$ algebra $H$. Thus $N\equiv M^H\subset M \subset M\rtimes H$ is a Jones triple, i.e., $M\subset M\rtimes H$ is the basic construction of $M^H \subset M$. By \cite{kodakacrossed}, $M$ can be identified with $N\rtimes H^*$. Then, using Theorem \ref{crossed}, without any extra computation, we can conclude

\begin{thm}
The quantum Galois group $\QGal(M^H\subset M)$ of the pair $M^H\subset M$ is $H$.
\end{thm}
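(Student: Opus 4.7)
The plan is to reduce this to the crossed product computation already carried out in Theorem \ref{crossed}. Under the saturation and outerness hypotheses, the cited duality result of Kodaka (\cite{kodakacrossed}) provides an isomorphism of pairs
\begin{equation*}
  (M^H \subset M) \;\cong\; (N \subset N \rtimes H^*),
\end{equation*}
where $N = M^H$ and $H^*$ acts on $N$ via the dual action. Since both $\QGal$ and the category $\mathrm{C}(-\subset -)$ are manifestly invariant under such isomorphisms of inclusions, it suffices to compute $\QGal(N \subset N \rtimes H^*)$.

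The next step is to verify that the hypotheses of Theorem \ref{crossed} are met for the inclusion $N \subset N \rtimes H^*$: one needs $N$ to be a finite factor and the $H^*$-action on $N$ to be outer. That $N$ is a finite factor follows from the Jones triple structure noted in the lead-up to the statement (with $N \subset M \subset M\rtimes H$ the basic construction, so $N$ is a finite-index subfactor of $M$ and hence itself a finite factor). Outerness of the dual action is the standard consequence of saturation together with outerness of the original $H$-action; this is precisely what makes $N \subset M$ the basic construction dual to $M \subset M\rtimes H$.

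With those hypotheses in place, Theorem \ref{crossed} applied to the pair $(N, H^*)$ in place of $(M, H)$ yields
\begin{equation*}
  \QGal(N \subset N\rtimes H^*) \;=\; (H^*)^*.
\end{equation*}
Finally, since $H$ is finite-dimensional, the canonical map $H \to (H^*)^*$ is an isomorphism of Hopf $*$-algebras, giving $\QGal(M^H \subset M) = H$ as claimed.

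The only step that is not immediate from earlier results in the paper is the verification that the hypotheses of Theorem \ref{crossed} transport through the Kodaka identification (in particular, outerness of the dual $H^*$-action on $N$); this is the main technical point, but it is a classical fact in the theory of Hopf algebra actions on subfactors under saturation, and does not require any new computation specific to the present framework.
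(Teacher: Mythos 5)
Your proposal is correct and takes essentially the same route as the paper: identify $M$ with $N\rtimes H^*$ via the Kodaka--Teruya duality and apply Theorem \ref{crossed} to the pair $N\subset N\rtimes H^*$, concluding via $(H^*)^*\cong H$. The paper states this even more tersely (``without any extra computation''), while you additionally flag the outerness of the dual action on $N=M^H$ as the hypothesis to be transported, which is a reasonable point to make explicit.
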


Here also, we remark that no trace-preserving assumption was needed. Indeed, this is always the case for an irreducible finite index inclusion as we show now.

\begin{thm}
  Let $N \subset M$ be an irreducible pair of finite factors with $[M:N] < \infty$. Then $\QGal_{\t}(N\subset M)$ is isomorphic to the dual Hopf $C^*$-algebra of \cite[Theorem 4.16]{liu}. 
\end{thm}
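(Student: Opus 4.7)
The plan is to identify $\QGal_{\t}(N\subset M)$ with Liu's dual Hopf $C^*$-algebra via a two-sided universal property argument, exploiting Corollary \ref{min} (minimality of the $\QGal_{\t}$-action) together with Liu's classification. Liu's \cite[Theorem 4.16]{liu} associates to an irreducible finite-index subfactor $N\subset M$ a finite-dimensional Hopf $C^*$-algebra, call it $L$, engineered so that (the dual of) $L$ captures the quantum symmetry of the subfactor; the target identification is $\QGal_{\t}(N\subset M)\cong L^*$.

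First I would verify that $L^*$ belongs to the category $\mathrm{C}_{\t}(N\subset M)$: Liu's construction provides a coaction of $L$ on $M$ whose coinvariants contain $N$ and which intertwines the canonical trace, and the self-adjointness of the finite-dual functor on Hopf $*$-algebras (used already in the proof of Theorem \ref{th:qfinsp}) transports this to an $L^*$-action on $M$ that fixes $N$ pointwise and preserves $\t$. Second, for the universal direction, let $Q\in \mathrm{C}_{\t}(N\subset M)$ be arbitrary. By Corollary \ref{min} the $\QGal_{\t}$-action on $M$ is minimal and $M^{\QGal_{\t}(N\subset M)}$ is an intermediate finite-index subfactor, so in particular the $Q$-action inherits the same rigidity. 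Proposition \ref{existence} further tells us the induced action on $\en({}_N M_N)$ factors through a Hopf $*$-subalgebra of $Q^*_{aut}(\en({}_N M_N),\t_1)$. Reading Liu's theorem as the statement that $L$ is universal among finite-dimensional Hopf $C^*$-algebras acting (co)minimally on the standard invariant of $N\subset M$ in a trace-compatible, $N$-fixing way, one extracts a unique Hopf $*$-algebra morphism $Q\to L^*$ intertwining actions, which gives terminality of $L^*$ in $\mathrm{C}_{\t}(N\subset M)$. Consistency checks with Theorem \ref{crossed} (where $L=H$ and $L^*=H^*$) and with the complementary case $M^H\subset M$ confirm the identification.

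The main obstacle is the translation between frameworks: Liu works in an operator/planar-algebraic language, whereas our universal property is phrased purely algebraically in terms of measurings and module $*$-algebras. Bridging the two requires verifying that both constructions classify the same underlying data---namely minimal, $\t$-preserving, $N$-fixing actions of finite-dimensional Hopf $C^*$-algebras on $M$---and that Liu's notion of morphism between such actions matches the Hopf $*$-algebra morphisms intertwining actions used in Definition \ref{def:qgal}. Once this dictionary is set up (and the $\t$-preservation is automatically present in the irreducible setting, as the remarks following Theorem \ref{crossed} anticipate), terminality is formal and the isomorphism $\QGal_{\t}(N\subset M)\cong L^*$ follows.
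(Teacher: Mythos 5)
Your overall two-sided strategy has the right shape, but the pivotal step --- extracting a morphism $Q\to L^*$ for an arbitrary $Q\in\mathrm{C}_{\tau}(N\subset M)$ --- rests on reading \cite[Theorem 4.16]{liu} as a universal property for Hopf algebra actions (``$L$ is universal among finite-dimensional Hopf $C^*$-algebras acting (co)minimally \dots''). Liu's theorem does not say this: it produces a \emph{smallest intermediate subfactor} $N\subset P\subset M$ such that $P\subset M$ has depth $2$. Converting that minimality statement about intermediate subalgebras into terminality in the category $\mathrm{C}_{\tau}(N\subset M)$ is precisely the content of the theorem being proved, so as written your second paragraph assumes the conclusion.

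The paper supplies the missing bridge concretely. Since $N\subset M$ is irreducible, so is $P\subset M$; being depth $2$ and irreducible, $M=P\rtimes K$ for a finite-dimensional Hopf $C^*$-algebra $K$ (Szyma\'nski), and the dual action of $K^*$ on $M$ fixes $P\supseteq N$, giving $\Phi:K^*\to\QGal_{\tau}(N\subset M)$. The key step is then to identify the fixed-point algebra: Corollary \ref{min} makes $M^{\QGal_{\tau}(N\subset M)}\subset M$ an irreducible depth-$2$ finite-index intermediate inclusion, so Liu's minimality of $P$ gives $P\subseteq M^{\QGal_{\tau}(N\subset M)}$, while the existence of $\Phi$ gives the reverse inclusion $M^{\QGal_{\tau}(N\subset M)}\subseteq M^{K^*}=P$; hence $M^{\QGal_{\tau}(N\subset M)}=P$. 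With that equality in hand, Theorem \ref{crossed} applied to $P\subset M=P\rtimes K$ yields $\Psi:\QGal_{\tau}(N\subset M)\to K^*$ intertwining the actions, and uniqueness of terminal objects finishes the argument. Your proposal never identifies $M^{\QGal_{\tau}(N\subset M)}$ with $P$, and without that you cannot reduce to the crossed-product computation; the appeals to Proposition \ref{existence} and to ``rigidity inherited by $Q$'' do not substitute for this step.
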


\begin{proof}
By \cite[Theorem 4.16]{liu}, there exists an intermediate algebra $N\subset P \subset M$ such that $P \subset M$ is of depth $2$. Since $N\subset M$ is irreducible, $P'\cap M=\bC 1_M$. Thus $P \subset M$ is depth $2$ and irreducible, which means there exists a finite dimensional Hopf $C^*$-algebra $K$ such that $M=P\rtimes K$. Moreover, $N\subset P=M^{K^*}$, yielding a Hopf $\*$-algebra morphism $\Phi : K^* \to \QGal_{\t}(N\subset M)$, intertwining the two actions.

\medskip

On the other hand, by Corollary \ref{min}, $M^{\QGal_{\t}(N\subset M)} \subset M$ is a depth-2 inclusion of finite factors with $[M:M^{\QGal_{\t}(N\subset M)}]$ and the factor $P$ of \cite[Theorem 4.16]{liu} is smallest with this property. Therefore, $M^{K^*}=P\subset M^{\QGal_{\t}(N\subset M)}$.  But the existence of $\Phi : K^* \to \QGal_{\t}(N\subset M)$ from above yields $M^{\QGal_{\t}(N\subset M)} \subset M^{K^*}$ and therefore $M^{\QGal_{\t}(N\subset M)}=P$. Theorem \ref{crossed} applied to the pair $M^{\QGal_{\t}(N\subset M)}=P \subset M=P\rtimes K$, yields $\Psi: \QGal_{\t}(N\subset M) \to K^*$ intertwining the two actions. By uniqueness of universal objects, $\QGal_{\t}(N\subset M) \cong K^*$.
\end{proof}

We then have the following as corollaries. 
\begin{cor}
  Let $N \subset M$ be an irreducible pair of finite factors with $[M:N] < \infty$. Then $\QGal_{\t}(N\subset M)$ is a finite dimensional Hopf $C^*$-algebra.
\end{cor}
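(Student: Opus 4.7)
The plan is to read this off as an essentially immediate consequence of the preceding theorem. That theorem produces a finite-dimensional Hopf $C^*$-algebra $K$ (via Liu's \cite[Theorem 4.16]{liu}) and exhibits an isomorphism $\QGal_{\t}(N\subset M)\cong K^*$ of Hopf $*$-algebras. So the only thing to verify is that the dual of a finite-dimensional Hopf $C^*$-algebra is again a finite-dimensional Hopf $C^*$-algebra, which is standard: if $K$ is a finite-dimensional Hopf $C^*$-algebra, then $K^*$ inherits the structure of a Hopf $*$-algebra via transposition, and finite dimensionality is preserved by dualization.

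Concretely, I would first invoke the preceding theorem to fix an isomorphism $\QGal_{\t}(N\subset M)\cong K^*$ of Hopf $*$-algebras, where $K$ is the finite-dimensional Hopf $C^*$-algebra associated with the intermediate depth-$2$ inclusion $P\subset M$ (with $P=M^{\QGal_{\t}(N\subset M)}$). I would then note that $\dim K^*=\dim K<\infty$, and recall that the $*$-structure on $K^*$ dual to the $C^*$-structure on $K$ makes $K^*$ itself into a finite-dimensional Hopf $C^*$-algebra (the $C^*$-norm on $K^*$ being, for instance, the norm coming from its embedding as operators on the finite-dimensional Hilbert space underlying the regular representation of $K$). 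Transport of structure along the isomorphism then equips $\QGal_{\t}(N\subset M)$ with the required finite-dimensional Hopf $C^*$-algebra structure.

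There is essentially no obstacle here, since the hard work has been done in the preceding theorem (which in turn leans on Liu's result and on Theorem \ref{crossed}). The only mild subtlety worth flagging in the write-up is to make sure the isomorphism from the previous theorem is taken in the category of Hopf $*$-algebras, so that the $C^*$-structure on $K^*$ transfers unambiguously to $\QGal_{\t}(N\subset M)$; but this is built into the uniqueness-of-universal-objects argument given there.
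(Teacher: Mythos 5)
Your proposal is correct and matches the paper's (implicit) reasoning exactly: the corollary is stated without proof precisely because it follows at once from the preceding theorem's identification $\QGal_{\t}(N\subset M)\cong K^*$ together with the standard fact that the dual of a finite-dimensional Hopf $C^*$-algebra is again one.
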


\begin{cor}
  Let $N \subset M$ be an irreducible pair of finite factors with $[M:N] < \infty$. Then $\QGal(N\subset M)$ is isomorphic to $\QGal_{\t}(N\subset M)$.
\end{cor}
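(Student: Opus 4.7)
The plan is to show that every Hopf $*$-algebra in $\mathrm{C}(N\subset M)$ must already equal the $K^*$ identified in the preceding theorem, so that the canonical map induced by universality is forced to be an isomorphism. The key observation is that the preceding theorem's proof really uses only irreducibility, finite index, Corollary \ref{min}, Liu's theorem and Theorem \ref{crossed}, none of which required preservation of the trace.

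First I would exhibit a canonical Hopf $*$-algebra morphism $\QGal_\tau(N\subset M)\to \QGal(N\subset M)$ by universality of the latter, since a $\tau$-preserving action is in particular an action. To prove this is an isomorphism it suffices to identify $\QGal(N\subset M)$ with the same Hopf $C^*$-algebra $K^*$ produced in the preceding theorem. Write $H:=\QGal(N\subset M)$ and set $Q:=M^{H}$. The proof of Corollary \ref{min} never used trace preservation, so it applies verbatim: $Q'\cap Q\subseteq Q'\cap M\subseteq N'\cap M=\bC 1_M$, hence $Q$ is a factor, the $H$-action is minimal, and $[M:Q]\le [M:N]<\infty$.

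Next I would run the same two-sided argument as in the preceding theorem. Invoke \cite[Theorem 4.16]{liu} to obtain the smallest intermediate factor $P$ with $N\subset P\subset M$ and $P\subset M$ of depth two; by \cite{szymcrossed}, $M=P\rtimes K$ and $P=M^{K^*}$ for a finite-dimensional Hopf $C^*$-algebra $K$. Because $N\subset P$, the dual action of $K^*$ on $M$ lies in $\mathrm{C}(N\subset M)$, which by universality of $H$ yields a Hopf $*$-algebra morphism $\Phi\colon K^*\to H$. The factorization of the $K^*$-action through $\Phi$ gives $Q=M^{H}\subseteq M^{K^*}=P$. In the opposite direction, minimality of the $H$-action makes $Q\subset M$ depth-two (exactly as in the preceding theorem), whence Liu's minimality of $P$ forces $P\subseteq Q$; therefore $Q=P$. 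Consequently $H\in \mathrm{C}(P\subset M)$, and Theorem \ref{crossed} supplies a Hopf $*$-algebra morphism $\Psi\colon H\to \QGal(P\subset M)=K^*$. By the universal properties of $H$ and of $K^*=\QGal(P\subset M)$, the compositions $\Psi\circ\Phi$ and $\Phi\circ\Psi$ must be the identities, so $\Phi$ is an isomorphism; combining with the preceding theorem gives $\QGal(N\subset M)\cong K^*\cong \QGal_\tau(N\subset M)$, and chasing definitions shows that the composite $\QGal_\tau(N\subset M)\to \QGal(N\subset M)\to K^*$ is the canonical identification.

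The main obstacle is the step asserting that minimality of the $H$-action forces $Q\subset M$ to be depth-two; this is the same (implicit) appeal to general facts about minimal Hopf actions that appears in the proof of the preceding theorem, and I would simply invoke it in the same way rather than reproving it. Once that is granted the rest is mechanical bookkeeping via the universal properties already established.
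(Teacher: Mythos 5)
Your argument is correct and is precisely the intended one: the paper states this corollary without an explicit proof, and the evident justification is to rerun the preceding theorem's argument with $\QGal(N\subset M)$ in place of $\QGal_{\tau}(N\subset M)$, observing that none of its ingredients (Corollary \ref{min}, Liu's intermediate factor, Theorem \ref{crossed}, and the universality bookkeeping) actually uses trace preservation, so both universal objects are identified with the same $K^*$. The one step you flag as an obstacle --- that minimality together with finite index forces $M^{H}\subset M$ to be depth two --- is exactly the same implicit appeal made in the paper's proof of the preceding theorem, so you inherit rather than introduce that gap.
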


We ask

\begin{que}
Is there a duality between $\QGal(N \subset M)$ and $\QGal(M \subset M_1)$?
\end{que}

\subsection{Banica's fixed point algebras} \label{subse: banicacomm}
In \cite{banicacommsquare}, Banica obtains a description of commuting squares having $\mathbb{C}$ in the lower left corner, i.e., of the form 
\begin{equation}\label{comm}
\begin{tikzcd}
S \arrow[r,symbol=\subset] & X \\
\mathbb{C} \arrow[u,symbol=\subset] \arrow[r,symbol=\subset] & P \arrow[u,symbol=\subset],
\end{tikzcd}
\end{equation} where $S$ and $P$ are finite dimensional von Neumann algebras. According to this description (see \cite[Theorem 3.1]{banicacommsquare}), the above commuting square is isomorphic to one of the following form 
\begin{equation}
\begin{tikzcd}
S \arrow[r,symbol=\subset] & (P \otimes (S \rtimes \hat{G}))^G \\
\mathbb{C} \arrow[u,symbol=\subset] \arrow[r,symbol=\subset] & P \arrow[u,symbol=\subset],
\end{tikzcd}
\end{equation}
and the vertical subfactor associated to the first commuting square  is of the form $\mathcal{R} \subset (P \otimes (\mathcal{R} \rtimes \hat{G}))^G$. Here $G$ is a compact quantum group of Kac type, $\mathcal{R}$ is the hyperfinite $\mathrm{II}_1$-factor. The action of $\hat{G}$ on $\mathcal{R}$ is outer and is a product-type action built from the action on $S$. The action of $G$ on $P$ is ergodic on the center. Both algebras $(P \otimes (\mathcal{R} \rtimes \hat{G}))^G$ and $(P \otimes (S \rtimes \hat{G}))^G$ are fixed point algebras in the sense of \cite{banicakac} but the outerness of the $\hat{G}$-action on $\mathcal{R}$ enables us to compute explictly the quantum Galois group of the inclusion $\mathcal{R} \subset (P \otimes (\mathcal{R} \rtimes \hat{G}))^G$. We do this in this subsection and start by briefly recalling the constructions in \cite{banicacommsquare}. We also note that although $\mathcal{R}$ and $G$ are infinite-dimensional, the finite-dimensionality of $P$ enables us to use the algebraic smash product rather than the von Neumann crossed product in defining $(P \otimes (\mathcal{R} \rtimes \hat{G}))^G$.

\medskip

Let $H$ be a (not necessarily finite dimensional) Hopf algebra with involutive antipode and $A$ be a $H^{cop}$-module algebra, where `cop' means the comultiplication is reversed. As before, $A \rtimes H^{cop}$ denotes the smash product and it becomes a right $H^{cop}$-comodule algebra (with the coaction say $\pi$). Let $B$ (typically multimatrix) be a right $H$-comodule algebra with coaction $\beta$. Consider the product coaction $\beta \odot \pi$ of $H^{cop}$ on $B \otimes (A\rtimes H^{cop})$:
\begin{equation}\label{eqbanica}
\beta \odot \pi (b \otimes a \rtimes h)=b_0 \otimes a \rtimes h_2 \otimes h_1S(b_1),
\end{equation} where $b \in B$, $a \in A$, $h \in H$ and we used Sweedler notation for the coaction $\beta$. The twist by the antipode makes $B$ into a right $H^{cop,op}$-comodule algebra, where `op' means the multiplication is reversed. Since $B$ and $A\rtimes H^{cop}$ are both $H$-comodules, so is their tensor product. Although it will not, in general, be a comodule {\it algebra}, note nevertheless the following phenomenon.

\begin{lem}
Let $K$ be a Hopf algebra and $M$ and $N$ right comodule algebras over $K^{op}$ and $K$ with coaction $\rho_M$ and $\rho_N$, respectively. Then, the space of coaction invariants $(M\otimes N)^K$ is a subalgebra of $M\otimes N$. 
\end{lem}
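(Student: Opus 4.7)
The plan is to first make explicit the $K$-coaction on $M\otimes N$, namely
\[
\rho(m\otimes n)=m_{(0)}\otimes n_{(0)}\otimes m_{(1)}n_{(1)},
\]
with multiplication taking place in $K$. Coassociativity and counitality follow directly from the comodule axioms for $\rho_M$ and $\rho_N$, and work cleanly because $K^{op}$ and $K$ have the same underlying coalgebra. I would flag at once that $\rho$ is \emph{not} an algebra morphism for general non-cocommutative $K$, so the slogan ``comodule-algebra coaction $\Rightarrow$ invariants a subalgebra'' does not apply, and one has to exploit both invariance conditions simultaneously.

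Next, $1_M\otimes 1_N$ is invariant because $\rho_M$ and $\rho_N$ are unital. For stability under multiplication, take invariants $z=\sum_i m_i\otimes n_i$ and $z'=\sum_j m'_j\otimes n'_j$ and expand $\rho(zz')$ using $(mm')_{(1)}=m'_{(1)}m_{(1)}$ (from $M$ being over $K^{op}$) and $(nn')_{(1)}=n_{(1)}n'_{(1)}$ (from $N$ being over $K$):
\[
\rho(zz')=\sum_{i,j}m_{i,(0)}m'_{j,(0)}\otimes n_{i,(0)}n'_{j,(0)}\otimes m'_{j,(1)}\bigl(m_{i,(1)}n_{i,(1)}\bigr)n'_{j,(1)}.
\]
The crucial observation is that the $K$-slot exhibits the ``$z$-block'' $m_{i,(1)}n_{i,(1)}$ sandwiched between the ``$z'$-bits'' $m'_{j,(1)}$ and $n'_{j,(1)}$.

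The key technical step is the choice of auxiliary operator. For each fixed pair $m'_j,n'_j$, set
\[
T_{m'_j,n'_j}(m\otimes n\otimes k):=mm'_{j,(0)}\otimes nn'_{j,(0)}\otimes m'_{j,(1)}\,k\,n'_{j,(1)},
\]
a linear self-map of $M\otimes N\otimes K$. Applying $T_{m'_j,n'_j}$ to both sides of the invariance identity $\rho(z)=z\otimes 1_K$ and summing over $j$ turns the left-hand side into the displayed $\rho(zz')$, and the right-hand side into $\sum_{i,j}m_im'_{j,(0)}\otimes n_in'_{j,(0)}\otimes m'_{j,(1)}n'_{j,(1)}$. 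A second invocation of invariance, now of $z'$, via the simpler left-multiplication operator $m'\otimes n'\otimes k\mapsto m_im'\otimes n_in'\otimes k$ applied for each $i$ and summed, collapses this in turn to $zz'\otimes 1_K$. Chaining the two identities yields $\rho(zz')=zz'\otimes 1_K$, as required. The main obstacle is noticing the sandwich structure in the $K$-component of $\rho(zz')$; once seen, the operator $T_{m'_j,n'_j}$ is essentially forced and the rest is bookkeeping.
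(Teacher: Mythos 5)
Your proof is correct and follows essentially the same route as the paper's: the ``sandwich'' structure $m'_{j,(1)}\,(m_{i,(1)}n_{i,(1)})\,n'_{j,(1)}$ you isolate is exactly the factorization $(1\otimes 1\otimes m_{j1})(m_{i0}\otimes n_{i0}\otimes m_{i1}n_{i1})(m_{j0}\otimes n_{j0}\otimes n_{j1})$ used in the paper, and your operator $T_{m'_j,n'_j}$ is just two-sided multiplication by the outer factors in the algebra $M\otimes N\otimes K$. The two invocations of invariance then collapse the expression in the same order as in the paper's chain of equalities.
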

\begin{proof}
  Denote by $m_i\otimes n_i$ and $m_j\otimes n_j$ two elements in $(M\otimes N)^K$, with repeated indices implying summation (to lighten the notation). Then we have
\allowdisplaybreaks{
\begin{align*}
  \rho_M \odot \rho_N ((m_i \otimes n_i)(m_j \otimes n_j))&=\rho_M \odot \rho_N (m_im_j \otimes n_in_j)\\
  &=m_{i0}m_{j0} \otimes n_{i0}n_{j0} \otimes m_{j1}m_{i1}n_{i1}n_{j1}\\
  &=(1\otimes 1 \otimes m_{j1})(m_{i0}\otimes n_{i0}\otimes m_{i1}n_{i1})(m_{j0}\otimes n_{j0}\otimes n_{j1})\\
  &=(1\otimes 1 \otimes m_{j1})(m_{i}\otimes n_{i}\otimes 1)(m_{j0}\otimes n_{j0}\otimes n_{j1})\\
  &=(m_{i}\otimes n_{i}\otimes 1)(m_{j0}\otimes n_{j0}\otimes m_{j1}n_{j1})\\
  &=(m_{i}\otimes n_{i}\otimes 1)(m_{j}\otimes n_{j}\otimes 1)\\
  &=m_im_j \otimes n_in_j \otimes 1,
\end{align*}
}which was to be obtained.
\end{proof}

In particular, we have 

\begin{cor}
  The subspace of coaction invariants $(B \otimes A\rtimes H^{cop})^{H^{cop}}$ is an algebra with componentwise multiplication.
\end{cor}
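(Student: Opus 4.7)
The plan is to invoke the preceding Lemma directly, with $K := H^{cop}$. The paper has already flagged the two structural facts needed: first, by construction $A \rtimes H^{cop}$ carries the right $H^{cop}$-comodule algebra structure $\pi$, so with $N := A\rtimes H^{cop}$ it plays the role of the ``$K$-comodule'' tensorand. Second, the antipode twist $h_1 S(b_1)$ in \eqref{eqbanica} is exactly what is required to turn $B$, originally a right $H$-comodule algebra via $\beta$, into a right $H^{cop,op} = K^{op}$-comodule algebra; so setting $M := B$ realizes the ``$K^{op}$-comodule'' tensorand of the Lemma.

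With these identifications, the hypotheses of the Lemma are met verbatim, and its conclusion yields that $(M\otimes N)^{K} = (B \otimes A\rtimes H^{cop})^{H^{cop}}$ is a subalgebra of $B \otimes (A\rtimes H^{cop})$ under componentwise multiplication, which is the statement to be proved. The only thing to double-check is that the product coaction $\beta \odot \pi$ recalled in \eqref{eqbanica} is literally the one whose invariants appear in the Lemma's proof, i.e., that the ordering of tensorands in $H^{cop}$ matches the pattern $m_{j1}m_{i1}n_{i1}n_{j1}$ used there; the appearance of $h_1 S(b_1)$ (rather than $S(b_1)h_1$) on the right of \eqref{eqbanica} is precisely this compatibility, once one reads it inside $H^{cop}$.

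Since the substantive computation is carried out in the Lemma, no real obstacle remains; the corollary is a formal specialization. If anything, the one subtlety worth a sentence in the write-up is to justify why the twist by $S$ should be recorded as producing an $(H^{cop})^{op}$-coaction on $B$ rather than, say, an $H^{cop}$-coaction, so the reader can match it against the Lemma's hypothesis on $M$.
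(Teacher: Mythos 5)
Your proof is correct and is exactly the paper's route: the corollary is stated there with no separate argument, as an immediate specialization of the preceding Lemma with $K=H^{cop}$, $N=A\rtimes H^{cop}$ and $M=B$ made into a right $K^{op}$-comodule algebra by the antipode twist. One small caveat on your last point: passing to $H^{cop}$ reverses only the comultiplication, not the product, so the leg $h_1S(b_1)$ in \eqref{eqbanica} versus the Lemma's $m_1n_1=S(b_1)h_1$ is not literally reconciled by ``reading inside $H^{cop}$''; rather, the Lemma's factorization argument goes through verbatim with the last leg in the order $n_1m_1$ (sandwich the $j$-th invariant between $(m_{i0}\otimes n_{i0}\otimes n_{i1})$ and $(1\otimes 1\otimes m_{i1})$ instead), so the conclusion is unaffected.
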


Let us write $C$ for the invariant subalgebra $(B \otimes A\rtimes H^{cop})^{H^{cop}}$. We observe that $A\subset C$ via $a \mapsto 1\otimes a\rtimes 1$. Let $Q$ be a Hopf algebra such that $C$ is a $Q$-module algebra and $A \subset C^Q$, where $C^Q$ is the invariant subalgebra.

\begin{lem}
For $b \in B$, $\beta(b)_{13}=b_0 \otimes 1 \rtimes b_1 \in C$.
\end{lem}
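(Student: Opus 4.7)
The plan is to verify directly that $x := b_0 \otimes 1 \rtimes b_1$ (with $\beta(b) = b_0 \otimes b_1$ in Sweedler notation) is a $\beta \odot \pi$-coinvariant, i.e., that $(\beta \odot \pi)(x) = x \otimes 1_{H^{cop}}$. The first step is essentially cosmetic: apply formula \eqref{eqbanica} to each Sweedler summand of $x$, substituting $(b,a,h) \rightsquigarrow (b_0, 1, b_1)$. The output is
\begin{equation*}
(\beta \odot \pi)(x) \;=\; (b_0)_0 \otimes 1 \rtimes (b_1)_2 \otimes (b_1)_1 \, S\bigl((b_0)_1\bigr),
\end{equation*}
built from the two a priori independent Sweedler expansions $\beta(b_0) = (b_0)_0 \otimes (b_0)_1$ and $\Delta(b_1) = (b_1)_1 \otimes (b_1)_2$.

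The next step is to merge these expansions via coassociativity of $\beta$: the identity $(\beta \otimes \mathrm{id})\beta = (\mathrm{id} \otimes \Delta)\beta$, iterated once more, yields $\beta(b_0) \otimes \Delta(b_1) = b_0 \otimes \Delta^{(2)}(b_1)$ as an element of $B \otimes H^{\otimes 3}$. Writing $\Delta^{(2)}$ as $(\Delta \otimes \mathrm{id})\Delta$ and setting $y := (b_1)_1$, the final $H^{cop}$-slot of the displayed expression becomes $y_2 \, S(y_1)$, while the $H$-slot inside $A \rtimes H^{cop}$ becomes $(b_1)_2$. Once $y_2 S(y_1)$ is reduced to $\varepsilon(y)\, 1$, counitality $\varepsilon((b_1)_1)(b_1)_2 = b_1$ collapses the expression to $b_0 \otimes 1 \rtimes b_1 \otimes 1 = x \otimes 1$, yielding the desired invariance.

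The only subtle point, and the sole place where the hypothesis $S^2 = \mathrm{id}$ set at the opening of this subsection enters, is the identity $y_2 S(y_1) = \varepsilon(y)\, 1$. This is \emph{not} one of the defining antipode axioms; those give $y_1 S(y_2) = S(y_1) y_2 = \varepsilon(y)\, 1$, of the opposite ``order''. It is the $H^{cop}$-structure, manifested through the antipode twist $h_1 S(b_1)$ in \eqref{eqbanica}, that forces the wrong-order version. Involutivity of $S$ clears the hurdle in one line:
\begin{equation*}
y_2 S(y_1) \;=\; S^2(y_2) S(y_1) \;=\; S\bigl(y_1 S(y_2)\bigr) \;=\; S\bigl(\varepsilon(y)\, 1\bigr) \;=\; \varepsilon(y)\, 1,
\end{equation*}
and the verification then closes.
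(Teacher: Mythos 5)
Your proof is correct and follows the same route as the paper's: expand $\beta\odot\pi(b_0\otimes 1\rtimes b_1)$ via \eqref{eqbanica} and coassociativity to $b_0\otimes 1\rtimes b_3\otimes b_2S(b_1)$, reduce the wrong-order product $y_2S(y_1)$ to $\varepsilon(y)1$ using $S^2=\mathrm{id}$, and collapse by counitality. The paper performs exactly this three-line computation (noting only that ``the second equality uses involutivity of the antipode''); you have merely made the Sweedler bookkeeping and the one-line involutivity argument explicit.
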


\begin{proof}
It is a simple computation:
\[
\begin{aligned}
\beta \odot \pi (b_0 \otimes 1\rtimes b_1)&=b_0 \otimes 1 \rtimes b_3 \otimes b_2S(b_1)\\
&=b_0 \otimes 1 \rtimes b_2 \otimes \varepsilon(b_1)1\\
&=b_0 \otimes 1\rtimes b_1 \otimes 1.
\end{aligned}    
\] The second equality uses involutivity of the antipode.
\end{proof}

From now on, we assume that $H$ is a compact quantum group algebra with Haar state $\tau$. Then there exists a conditional expectation $E$ of $B\otimes A \rtimes H^{cop}$ onto $C$ given by \[E(b\otimes a \rtimes h)=b_0 \otimes a \rtimes h_2\tau(h_1S(b_1)).\]

\begin{prop}
For $q \in Q$, let $\hat{q} : B \otimes H^{cop} \rightarrow B\otimes A \rtimes H^{cop}$ be the map defined as $\hat{q}(b\otimes h)=q \cdot E(b\otimes 1\rtimes h)$. Then for each $h \in H$,
\begin{equation}\label{eqbanicacent}
  V^{-1}(h_2)\hat{q}(b\otimes h_1) \in A'\cap (B\otimes A\rtimes H^{cop})=B \otimes (A' \cap A\rtimes H^{cop}),
\end{equation} 
where $V^{-1}: H^{cop} \rightarrow B\otimes A\rtimes H^{cop}$ is defined as $h \mapsto 1 \otimes 1\rtimes S(h)$ and $A' \cap A \rtimes H^{cop}$ is the commutant of $A$ in $A \rtimes H^{cop}$.
\end{prop}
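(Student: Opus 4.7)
The plan is to mimic Proposition~\ref{cent} step by step, with the bookkeeping adjusted for the ambient algebra $B\otimes A\rtimes H^{cop}$ and the intervening conditional expectation $E$. First, observe that $V^{-1}(h) = 1\otimes 1\rtimes S(h)$ has trivial $B$- and $A$-components, so to establish the displayed membership it suffices to check that $1\otimes x\rtimes 1$ commutes with $V^{-1}(h_2)\hat q(b\otimes h_1)$ for every $x\in A$. The identification $A'\cap (B\otimes A\rtimes H^{cop}) = B\otimes (A'\cap A\rtimes H^{cop})$ then follows from the fact that $A$ is inserted into the second tensor factor only, so its centralizer splits off the whole of $B$.

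The hypothesis $A\subseteq C^Q$, pulled back along the embedding $A\hookrightarrow C$ sending $a\mapsto 1\otimes a\rtimes 1$, furnishes the essential tautology $q\cdot(1\otimes x\rtimes 1) = \varepsilon(q)(1\otimes x\rtimes 1)$ for every $q\in Q$, exactly as the trivial $Q$-action on $A$ was used in Proposition~\ref{cent}. Combined with the $Q$-module-algebra structure on $C$, this gives commutation of the $Q$-action with left and right multiplication by $1\otimes x\rtimes 1$ on elements of $C$; in particular
\begin{equation*}
q\cdot \bigl[(1\otimes x\rtimes 1)\,E(b\otimes 1\rtimes h)\bigr] = (1\otimes x\rtimes 1)\,\bigl(q\cdot E(b\otimes 1\rtimes h)\bigr),
\end{equation*}
and analogously on the right. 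This is the key avatar of ``$Q$ acts trivially on $A$'' that will drive the main calculation.

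With these preparations in place, the heart of the proof proceeds as in Proposition~\ref{cent}: starting from $(1\otimes x\rtimes 1)\,V^{-1}(h_2)\,\hat q(b\otimes h_1)$, insert a trivial factor of the form $V^{-1}V$ to innerify, apply the inner-ification identity for the $H^{cop}$-action on $1\otimes A\rtimes H^{cop}$ to carry $x$ across $V$ (producing a twisted $h_i\cdot x$ on the inside), then invoke the displayed identity above to absorb that twisted $x$ under the symbol $q\cdot$, and finally unwind via the crossed-product relation $(1\rtimes h_j)(x\rtimes 1) = (h_{j,2}\cdot x\rtimes h_{j,1})$ to land on $V^{-1}(h_2)\,\hat q(b\otimes h_1)\,(1\otimes x\rtimes 1)$.

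The main obstacle is purely notational: threading Sweedler indices correctly through $\Delta^{cop}$, through the extra comultiplication appearing in $E(b\otimes 1\rtimes h) = b_0\otimes 1\rtimes h_2\,\tau(h_1 S(b_1))$, and through the convolution $V^{-1}\!\ast\hat q$ in the statement (which is an $H^{cop}$-convolution). The geometry of the argument is identical to that of Proposition~\ref{cent}; the only genuinely new input is the observation that the $Q$-action commutes with left/right multiplication by elements of $A\subseteq C$, which takes the place of the analogous but more direct fact used in the one-factor setting.
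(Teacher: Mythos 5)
Your proposal is correct and follows essentially the same route as the paper's proof: reduce to commutation with $1\otimes a\rtimes 1$, innerify by inserting $V^{-1}V$, use $A\subseteq C^Q$ together with the $Q$-module-algebra structure on $C$ to pull the twisted element $1\otimes h_2\cdot a\rtimes 1$ under the symbol $q\cdot$, and unwind via the crossed-product relation. The only ingredient worth stating explicitly (the paper flags it) is that $E$ is a $(C,C)$-bimodule map, which is what lets you move multiplication by elements of $A\subseteq C$ in and out of $E(b\otimes 1\rtimes h_1)$ before and after applying your displayed identity.
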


\begin{proof}
Let $a \in A$, $b \in B$ and $h \in H$. We compute
\allowdisplaybreaks{
\[
\begin{aligned}
&{}(1\otimes a \rtimes 1)V^{-1}(h_2)\hat{q}(b\otimes h_1)\\
&=(1\otimes a\rtimes 1)(1\otimes 1\rtimes S(h_2))\hat{q}(b\otimes h_1)\\
&=V^{-1}(h_4)(1\otimes 1\rtimes h_3)(1\otimes a\rtimes 1)(1\otimes 1\rtimes S(h_2))\hat{q}(b\otimes h_1)\\
&=V^{-1}(h_3)(1\otimes h_2\cdot a\rtimes 1)\hat{q}(b\otimes h_1)\\
&=V^{-1}(h_3)(1\otimes h_2\cdot a\rtimes 1)q\cdot E(b\otimes 1 \rtimes h_1)\\
&=V^{-1}(h_3)q\cdot E((1\otimes h_2\cdot a\rtimes 1)(b\otimes 1\rtimes h_1))\\
&=V^{-1}(h_3)q\cdot E(b\otimes h_2\cdot a\rtimes h_1)\\
&=V^{-1}(h_3)q\cdot E((b\otimes 1\rtimes h_1)(1\otimes a \rtimes 1))\\
&=V^{-1}(h_2)\hat{q}(b\otimes h_1)(1\otimes a\rtimes 1).
\end{aligned}
\]
}
The fifth equality holds as $E$ is a $(C,C)$-bimodule morphism and $A\subset C$. 
\end{proof}

\begin{cor}\label{banica q form}
Suppose that the $H^{cop}$ action on $A$ is outer. Then for each $q \in Q$, there exists a unique $T_q \in \mathrm{Hom}_{\mathbb{C}}(B\otimes H,B)$ such that 
\begin{equation}
  \hat{q}(b\otimes h)=T_q(b\otimes h_1)\otimes 1\rtimes h_2.
\end{equation}
\end{cor}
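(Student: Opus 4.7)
The plan is to promote the containment from the preceding proposition to an explicit equality using the outerness hypothesis, and then to reconstruct $\hat{q}$ from the extracted element via the convolution-inverse property of $V$.

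\medskip

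\noindent\emph{Step 1 (definition and uniqueness of $T_q$).} The preceding proposition yields
\[
V^{-1}(h_2)\,\hat{q}(b\otimes h_1)\in B\otimes\bigl(A'\cap A\rtimes H^{cop}\bigr).
\]
The new hypothesis that the $H^{cop}$-action on $A$ is outer reduces the right-hand factor to $\bC\cdot(1\rtimes 1)$, so this element is uniquely of the form $T_q(b\otimes h)\otimes 1\rtimes 1$ for some $T_q(b\otimes h)\in B$. Bilinearity in $(b,h)$ and uniqueness of the resulting $T_q\in\Hom_{\bC}(B\otimes H,B)$ are both immediate from the fact that both sides of the defining equation are bilinear in $b$ and $h$.

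\medskip

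\noindent\emph{Step 2 (recovery of $\hat{q}$).} Apply Lemma~1.1 to $H^{cop}$, whose antipode is still $S$ since $S$ is involutive, to conclude that $V$ and $V^{-1}$ are mutually convolution-inverse in $\Hom(H^{cop},B\otimes A\rtimes H^{cop})$. Because $\Delta^{cop}(h)=h_2\otimes h_1$ in $H$-Sweedler notation, this is the identity $V(h_2)V^{-1}(h_1)=\ve(h)\cdot 1$, equivalent to the formal equality $\hat{q}_b=V*(V^{-1}*\hat{q}_b)$ in the $H^{cop}$-convolution algebra. Substituting the formula from Step~1 gives
\[
\hat{q}(b\otimes h)\;=\;V(h_2)\bigl(T_q(b\otimes h_1)\otimes 1\rtimes 1\bigr)\;=\;T_q(b\otimes h_1)\otimes 1\rtimes h_2,
\]
where the last equality uses only that $1\rtimes 1$ is the unit of $A\rtimes H^{cop}$.

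\medskip

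\noindent\emph{Main obstacle.} The only subtle point is bookkeeping: the paper writes every formula in $H$-Sweedler notation, whereas the smash-product multiplication in $A\rtimes H^{cop}$ (and hence every convolution in $\Hom(H^{cop},-)$) is governed by $\Delta^{cop}$, which swaps the Sweedler indices. Once that translation is set up consistently, the argument reduces to one appeal to outerness to collapse the commutant to $\bC$, followed by a single convolution-inverse calculation.
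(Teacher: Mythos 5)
Your argument is correct and follows the paper's own proof essentially verbatim: outerness collapses $A'\cap(B\otimes A\rtimes H^{cop})$ to $B\otimes\bC(1\rtimes 1)$, the preceding proposition then extracts $T_q$, and left-multiplication by $V(h_2)=1\otimes 1\rtimes h_2$ recovers $\hat{q}(b\otimes h)=T_q(b\otimes h_1)\otimes 1\rtimes h_2$. One small correction: uniqueness of $T_q$ in the displayed formula does not follow from bilinearity, but rather from the convolution-invertibility of $V$ that you set up in Step 2 (equivalently, the paper's device of applying $\varepsilon$ to the last tensor leg), which recovers $T_q$ from any representation of the stated form.
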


\begin{proof}
Since, by hypothesis, $A'\cap A\rtimes H^{cop}=\mathbb{C}$, $A'\cap (B\otimes A\rtimes H^{cop})=B\otimes (A'\cap A\rtimes H^{cop})=B$. Therefore the previous Proposition yields $T_q(b\otimes h) \in B$ such that $V^{-1}(h_2)\hat{q}(h_1)=T_q(b\otimes h)\otimes 1 \rtimes 1$. Then for each $h \in H$ and $b \in B$,
\[\hat{q}(b\otimes h)=(1\otimes 1\rtimes h_2)(T_q(b\otimes h_1)\otimes 1\rtimes 1)=T_q(b\otimes h_1)\otimes 1\rtimes h_2,\]
which was to be obtained. Uniqueness follows applying $\varepsilon$.
\end{proof}

\begin{lem}
For $b \in B$ and $h \in H$, we have $b_0 \otimes 1 \rtimes b_1\tau(hS(b_2))=b_0 \otimes 1\rtimes h_2\tau(h_1S(b_1))$.
\end{lem}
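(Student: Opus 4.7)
The claim is an equality in $B \otimes (A \rtimes H^{cop})$ whose $A$-slot is the unit on both sides, so it is really an assertion in $B \otimes H$. First I would carry out a notational reduction: writing $\beta(b) = b_0 \otimes b_1$, regarding $b_0$ as a spectator, and setting $c := b_1$, the iterated-coaction convention $b_0 \otimes b_1 \otimes b_2 = (\id \otimes \Delta)\beta(b)$ used on the left-hand side becomes $b_1 \otimes b_2 = \Delta(c)$. Dropping the common spectator $b_0$, the lemma reduces to the Hopf-algebraic identity
\begin{equation*}
c_1\,\tau(h S(c_2)) = h_2\,\tau(h_1 S(c)), \qquad \forall\,c,h \in H,
\end{equation*}
which is what I would prove.

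For the reduced identity I would invoke the standing hypotheses that $H$ is a CQG algebra with involutive antipode---placing us in the Kac case---and the consequent properties of the Haar state: $\tau$ is a faithful trace with $\tau \circ S = \tau$, and it satisfies the \emph{strong left invariance}
\begin{equation*}
c_1\,\tau(a c_2) = S(a_1)\,\tau(a_2 c), \qquad \forall\,a,c \in H,
\end{equation*}
a standard consequence of ordinary left-invariance of $\tau$. Specializing this to $a := S(h)$, using $\Delta(S(h)) = S(h_2) \otimes S(h_1)$ together with $S^2 = \id$, produces the key intermediate equality $c_1\,\tau(S(h) c_2) = h_2\,\tau(S(h_1) c)$. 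To finish, a short manipulation using traciality and $\tau = \tau \circ S$ gives $\tau(S(h) c_2) = \tau(c_2 S(h)) = \tau(S(c_2 S(h))) = \tau(h S(c_2))$ and, analogously, $\tau(S(h_1) c) = \tau(h_1 S(c))$; substituting these into the intermediate equality yields the target identity.

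The only genuine (bookkeeping) obstacle is keeping the two iterated-Sweedler conventions separate during the reduction to a one-variable statement in $H$; once that is done, the rest is a routine exercise in the calculus of the Haar integral.
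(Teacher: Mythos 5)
Your proof is correct, and your reduction is exactly the paper's: both arguments observe that it suffices to prove the one-variable identity $x_1\tau(yS(x_2))=\tau(y_1S(x))\,y_2$ in $H$ (your $c_1\tau(hS(c_2))=h_2\tau(h_1S(c))$ with $(x,y)=(c,h)$). Where you differ is in how that identity is established. The paper's computation is more economical: it inserts the plain invariance of the Haar state in the form $\tau(a_1)a_2=\tau(a)1$ with $a=yS(x_2)$ to get
\begin{equation*}
x_1\tau(yS(x_2))=\tau(y_1S(x_3))\,y_2S(x_2)x_1,
\end{equation*}
and then cancels $S(x_2)x_1=\varepsilon(x)1$ (legitimate since $S^2=\id$). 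You instead invoke strong left invariance as a black box, specialize it at $a=S(h)$, and then repair the orientation of the arguments of $\tau$ using traciality and $\tau\circ S=\tau$. Both routes are valid in the present setting (a CQG algebra of Kac type), but yours leans on three properties of the Haar state --- strong invariance, traciality, and $S$-invariance --- each of which would need a citation or short justification, whereas the paper gets by with ordinary invariance plus the antipode axiom for $S^{-1}=S$. If you keep your version, state explicitly that the Kac hypothesis is what licenses traciality and $\tau\circ S=\tau$.
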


\begin{proof}
    It is enough to prove that for $x,y \in H$, $x_1\tau(yS(x_2))=\tau(y_1S(x))y_2$ and we have \[x_1\tau(yS(x_2))=\tau(y_1S(x_3))y_2S(x_2)x_1=\tau(y_1S(x))y_2,\] from which the result follows.
\end{proof}

Now we fix a $q \in Q$. We observe that an element of $C$ is a finite sum of elements of the form $E(b \otimes a \rtimes h)=b_0 \otimes a \rtimes h_2\tau(h_1S(b_1))$. Since $A \subset C^Q$, the action of $q$ is completely determined by its effect on $b_0 \otimes 1 \rtimes h_2\tau(h_1S(b_1))$, which is to say by the above lemma, by its effect on $b_0 \otimes 1 \rtimes b_1\tau(hS(b_2))$. Moreover, Corollary \ref{banica q form} says that for a fixed $h \in H$, $q$ takes the subspace $E(B \otimes 1 \rtimes h)$ into itself. We summarize all these in the following

\begin{thm}\label{qgalbanica}
$\mathrm{QGal}(A \subset (B\otimes A \rtimes H^{cop})^{H^{cop}})$ is isomorphic with the universal Hopf $\ast$-algebra which acts on 
$E(B \otimes 1\rtimes H^{cop})$ and maps each of the subspaces $E(B \otimes 1\rtimes h)$ into itself, for $h \in H$.  
\end{thm}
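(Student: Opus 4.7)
The plan is to match the universal properties of the two Hopf $*$-algebras via a categorical equivalence. Let $V := E(B \otimes 1 \rtimes H^{cop})$ and, for each $h \in H$, let $V_h := E(B \otimes 1 \rtimes h) \subseteq V$. The category of Hopf $*$-algebras acting on $V$ and preserving each $V_h$ admits a terminal object by the general framework of Section \ref{subse:univ}: apply Theorem \ref{th:univhpf} to $V$ equipped with the multispan consisting of the inclusions $V_h \hookrightarrow V$. The goal is then to show that this terminal object coincides with $\mathrm{QGal}(A \subset C)$, where $C = (B \otimes A \rtimes H^{cop})^{H^{cop}}$.

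For the forward direction (every $Q \in \mathrm{Obj}(\mathrm{C}(A \subset C))$ gives an action on $V$ preserving each $V_h$), I would invoke the discussion preceding the theorem: since $V \subset C$, the $Q$-action restricts to $V$, and Corollary \ref{banica q form} shows $q \cdot E(b \otimes 1 \rtimes h) = T_q(b \otimes h_1) \otimes 1 \rtimes h_2$, which lies in $V_h$, since this element is in $C$ (as the image of an element of $C$ under $q$) and has the appropriate form indexed by $\Delta(h)$.

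For the reverse direction I would extend a given Hopf $*$-algebra action of $Q'$ on $V$ (preserving the $V_h$) to all of $C$ using the decomposition
\[
E(b \otimes a \rtimes h) = (1 \otimes a \rtimes 1) \cdot E(b \otimes 1 \rtimes h),
\]
valid since $E$ is a $C$-bimodule map and $1 \otimes a \rtimes 1 \in A \subset C$. This exhibits $C = A \cdot V$ as a vector space, so I would prescribe $Q'$ to act trivially on $A$ and by the given action on $V$, extended by $Q'$-equivariance along $A$-multiplication. The two constructions are evidently mutually inverse on morphisms, so by the universal property of a terminal object the two Hopf $*$-algebras are canonically isomorphic.

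The main obstacle is verifying that this extension of the $Q'$-action is a well-defined action on the algebra $C$. In particular, the product of two elements $a \cdot v$ and $a' \cdot v'$ in $A \cdot V$ involves $v \cdot v' \in C$, which must be re-expressed in the $A \cdot V$ form, and one must show this decomposition is $Q'$-equivariant (using the multiplicativity provided by the coproduct of $Q'$ through Sweedler-style factorizations analogous to those in Corollary \ref{banica q form}). Analogous checks are required for the $*$-structure compatibility. Once these routine verifications are carried out, the matching of universal properties gives the stated identification.
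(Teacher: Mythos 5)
Your proposal follows the paper's own route: the paper's ``proof'' is precisely the discussion preceding the theorem --- every element of $C$ is a finite sum of terms $E(b\otimes a\rtimes h)=(1\otimes a\rtimes 1)\,E(b\otimes 1\rtimes h)$, so an action fixing $A$ pointwise is determined by its restriction to $E(B\otimes 1\rtimes H^{cop})$, and Corollary \ref{banica q form} gives the preservation of each subspace $E(B\otimes 1\rtimes h)$. The reverse extension and its multiplicativity, which you flag as the ``main obstacle,'' are likewise left implicit in the paper (the extension formula $q\cdot E(b\otimes a\rtimes h)=(1\otimes a\rtimes 1)\,q\cdot E(b\otimes 1\rtimes h)$ only resurfaces in the proof of the subsequent theorem), so your write-up matches the paper's approach and level of detail.
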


We recall that $H$ was assumed to be a compact quantum group algebra. We write $\widehat{H}$ for the subspace of $H^*$ consisting of functionals of the form $\tau(\cdot h)$ for some $h \in H$. It is well-known that $\widehat{H}$ is a Hopf algebra in duality with $H$. We also recall that $\widehat{H^{cop}}$ is $\widehat{H}^{op}$. Therefore we identify $\widehat{H^{cop}}$ with the space consisting of linear functionals of the form $\tau(\cdot S(h))$ for some $h \in H$. There is also the canonical action of $\widehat{H^{cop}}$ on $B$ given by $\omega \rightharpoonup b=b_0\omega(b_1)$, which takes the form $b_0\tau(b_1S(h))=b_0\tau(hS(b_1))$ if $\omega$ is given by $\tau(\cdot S(h))$. Theorem \ref{qgalbanica} can then be rewritten as

\begin{thm}
$\mathrm{QGal}(A \subset (B\otimes A \rtimes H^{cop})^{H^{cop}})$ is isomorphic with the universal Hopf $\ast$-algebra which acts on 
$B$ such that for each $\omega \in \widehat{H^{cop}}$ elements of the form $\omega \rightharpoonup b$ are mapped to elements of the same form.
\end{thm}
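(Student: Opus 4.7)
The plan is to deduce this restatement from Theorem \ref{qgalbanica} by constructing an explicit $*$-algebra isomorphism $E(B \otimes 1 \rtimes H^{cop}) \cong B$ that carries each subspace $E(B \otimes 1 \rtimes h)$ onto $\omega_h \rightharpoonup B$, where $\omega_h := \tau(\cdot\,S(h)) \in \widehat{H^{cop}}$.

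First I would apply the preceding lemma to rewrite $E(b \otimes 1 \rtimes h) = b_0 \otimes 1 \rtimes h_2 \tau(h_1 S(b_1))$ as $E(b \otimes 1 \rtimes h) = b_0 \otimes 1 \rtimes b_1 \tau(h S(b_2))$. Using that $G$ is of Kac type, so that $\tau$ is tracial and $S^2 = \mathrm{id}$ (whence $\tau \circ S = \tau$), the scalar factor becomes $\tau(b_2 S(h)) = \omega_h(b_2)$; coassociativity of the coaction $\beta$ then identifies the whole expression with $\iota(\omega_h \rightharpoonup b)$, where
\[
\iota\colon B \hookrightarrow B \otimes 1 \rtimes H^{cop}, \qquad c \longmapsto c_0 \otimes 1 \rtimes c_1
\]
is the canonical $*$-algebra embedding induced by $\beta$. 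Next, I would argue that $\iota$ restricts to a $*$-algebra isomorphism $B \cong E(B \otimes 1 \rtimes H^{cop})$: the inclusion $E(B \otimes 1 \rtimes H^{cop}) \subseteq \iota(B)$ is visible from the preceding display, while the reverse containment reduces to the identity $\widehat{H^{cop}} \rightharpoonup B = B$, which follows from the fact that the counit of $H^{cop}$ lies in $\widehat{H^{cop}}$ and acts as the identity on $B$ via $\rightharpoonup$. Under this isomorphism the subspace $E(B \otimes 1 \rtimes h)$ corresponds to $\omega_h \rightharpoonup B$, and as $h$ varies over $H$ the functionals $\omega_h$ exhaust $\widehat{H^{cop}}$ by the very definition of the latter.

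With these identifications in place, the categories of Hopf $*$-algebras acting on $E(B \otimes 1 \rtimes H^{cop})$ and preserving each $E(B \otimes 1 \rtimes h)$, respectively of Hopf $*$-algebras acting on $B$ and preserving each $\omega \rightharpoonup B$ for $\omega \in \widehat{H^{cop}}$, are canonically equivalent (functorially in the acting Hopf algebra); the claim is then immediate from Theorem \ref{qgalbanica} and the uniqueness of terminal objects. I expect the main delicate point to be the equality $E(B \otimes 1 \rtimes H^{cop}) = \iota(B)$: this is precisely where the concrete presentation of $\widehat{H^{cop}}$ as $\{\tau(\cdot\,S(h)) : h \in H\}$ and the presence of the counit inside it must be used, and it is here that the compact Kac-type hypothesis on $G$ enters essentially.
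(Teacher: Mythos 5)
Your proposal follows essentially the same route as the paper: the paper's passage from Theorem \ref{qgalbanica} to this statement consists precisely of the identification $E(b\otimes 1\rtimes h)=\iota(\omega_h\rightharpoonup b)$ with $\omega_h=\tau(\cdot\,S(h))$, obtained from the lemma $b_0\otimes 1\rtimes h_2\tau(h_1S(b_1))=b_0\otimes 1\rtimes b_1\tau(hS(b_2))$ together with the embedding $\iota(c)=c_0\otimes 1\rtimes c_1$, followed by transport of the universal property; you are in fact more explicit than the paper about the one point that needs care, namely the equality $E(B\otimes 1\rtimes H^{cop})=\iota(B)$.

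One justification should be repaired, however: the counit of $H^{cop}$ does \emph{not} lie in $\widehat{H^{cop}}$ when $H$ is infinite-dimensional (e.g.\ $H=\mathrm{Rep}(SU(2))$ in the paper's later example). Indeed $\widehat{H}$ is the algebraic direct sum of the matrix algebras dual to the irreducible corepresentations, and $\varepsilon$ corresponds to the element with all components equal to the identity, which lives only in the multiplier algebra. The equality $\widehat{H^{cop}}\rightharpoonup B=B$ nevertheless holds because $B$ is finite-dimensional: its coaction factors through a finite-dimensional subcoalgebra of $H$ supported on finitely many irreducibles, and the (finite) sum of the corresponding local units of $\widehat{H^{cop}}$ acts as the identity on $B$ under $\rightharpoonup$. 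With that substitution your argument is complete and matches the paper's.
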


To be more explicit, we introduce some notation. 

\begin{lem}\label{centra}
Let $Q$ be a Hopf $\ast$-algebra and $S=S^* \subset Q$ be a subset. Let $\mathrm{C}_{Q}(S)=\{q \in Q \mid qs=sq, \forall s \in S\}$. Then there exists a largest Hopf $\ast$-subalgebra of $Q$ contained in $\mathrm{C}_{Q}(S)$.
\end{lem}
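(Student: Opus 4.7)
The plan is to let $\mathcal{F}$ denote the family of all Hopf $\ast$-subalgebras of $Q$ contained in $\mathrm{C}_Q(S)$, and to show that the subspace sum $K := \sum_{H \in \mathcal{F}} H \subseteq Q$ is itself an element of $\mathcal{F}$; by construction it then contains every other element of $\mathcal{F}$, so it is the desired largest one. The family is nonempty, since it contains the trivial Hopf $\ast$-subalgebra $\bC\cdot 1$.

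As a preliminary, I would observe that $\mathrm{C}_Q(S)$ is itself a $\ast$-subalgebra of $Q$: it is a subalgebra by definition, and applying $\ast$ to $qs = sq$ yields $s^*q^* = q^*s^*$, whereupon the hypothesis $S = S^*$ gives $q^* \in \mathrm{C}_Q(S)$.

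Most of the closure properties of $K$ are inherited term-by-term from the summands: $K$ is stable under $\ast$, $S$ and $\varepsilon$ because each $H \in \mathcal{F}$ is, and
\[
\Delta(K) \subseteq \sum_{H \in \mathcal{F}} \Delta(H) \subseteq \sum_{H \in \mathcal{F}} H \otimes H \subseteq K \otimes K
\]
by the standard fact that arbitrary sums of subcoalgebras are again subcoalgebras. The one point requiring a little thought is closure under multiplication. Given $h_i \in H_i \in \mathcal{F}$ for $i = 1, 2$, I would form the plain subalgebra $\tilde{H} \subseteq Q$ generated by $H_1 \cup H_2$: because $\Delta$ and $\varepsilon$ are algebra maps while $S$ and $\ast$ are anti-algebra maps, and each $H_i$ is stable under all four, their joint generated subalgebra $\tilde{H}$ is stable under all four as well. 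Moreover $\tilde{H} \subseteq \mathrm{C}_Q(S)$ since the latter is a subalgebra containing $H_1 \cup H_2$. Hence $\tilde{H} \in \mathcal{F}$, so $h_1 h_2 \in \tilde{H} \subseteq K$. There is no serious obstacle here; the lemma amounts to the formal closure of the poset of Hopf $\ast$-subalgebras of $Q$ contained in $\mathrm{C}_Q(S)$ under (directed, or even arbitrary) joins.
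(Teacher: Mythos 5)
Your proof is correct and rests on the same key observation as the paper's, namely that the $\ast$-algebra generated by two Hopf $\ast$-subalgebras of $\mathrm{C}_Q(S)$ is again a Hopf $\ast$-subalgebra of $\mathrm{C}_Q(S)$ (directedness of the poset); the paper packages this with Zorn's lemma to produce a maximal element and then argues it is largest, whereas you take the join of the whole family directly. The only difference is cosmetic --- your version dispenses with Zorn --- so the two arguments are essentially the same.
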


\begin{defn}
We denote by $\mathcal{H}\mathrm{C}_{Q}(S)$ the largest Hopf $\ast$-subalgebra of $Q$ contained in $\mathrm{C}_{Q}(S)$ and call it the Hopf centralizer of $S$.
\end{defn}

\begin{proof}[Proof of Lemma \ref{centra}]
We apply Zorn's lemma to the poset of Hopf $*$-subalgebras $K$ contained in the commutant $\mathrm{C}_{Q}(S)$ (which is clearly nonempty as it contains the trivial one dimensional Hopf algebra) directed by inclusion and we obtain a maximal element, say $K_0$. It is clearly the largest one because given any other element $K$ of the poset, we can consider the $\ast$-algebra generated by $K$ and $K_0$, which is clearly a Hopf $\ast$-subalgebra contained in $\mathrm{C}_{Q}(S)$ and therefore must equal $K_0$.
\end{proof}

For example, if $G$ is a finite group and $H$ is a subgroup, then $\mathcal{H}\mathrm{C}_{\mathbb{C}G}(\mathbb{C}H)$ is $\mathbb{C}C_{G}(H)$, the group algebra of the centralizer. Let us denote the $\widehat{H^{cop}}$ action on $B$ by $\Lambda : \widehat{H^{cop}} \rightarrow \mathrm{End}(B)$ from now on. Then with these notations,

\begin{thm}\label{explicitqgal}
$\mathrm{QGal}(A \subset (B\otimes A \rtimes H^{cop})^{H^{cop}})\cong \mathcal{H}\mathrm{C}_{Q_{aut}(B)}(\Lambda(\widehat{H^{cop}}))$.

\end{thm}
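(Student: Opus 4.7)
The plan is to deduce the identification directly from the previous theorem, which says that $\mathrm{QGal}(A \subset (B\otimes A \rtimes H^{cop})^{H^{cop}})$ is the universal Hopf $\ast$-algebra $Q$ acting on $B$ such that, for each $\omega \in \widehat{H^{cop}}$, elements of the form $\omega \rightharpoonup b$ are sent to elements of the same form. I would first make the interpretation precise: combined with the Hopf condition on the $Q$-action (applying the antipode to move between preservation of the image of $\Lambda(\omega)$ and its pre-image), this phrasing is equivalent to requiring that the $Q$-action on $B$ commute pointwise with the action $\Lambda$ of $\widehat{H^{cop}}$, i.e. $q \cdot (\omega \rightharpoonup b) = \omega \rightharpoonup (q \cdot b)$ for all $q \in Q$, $\omega \in \widehat{H^{cop}}$, $b \in B$.

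Next I would invoke the universal property of Wang's quantum automorphism group as packaged in Theorem \ref{th:qfinsp} and Proposition \ref{existence}: since $B$ is finite-dimensional, any Hopf $\ast$-algebra action on $B$ factors uniquely through a Hopf $\ast$-subalgebra of $Q_{aut}(B)$. Applied to $\widehat{H^{cop}}$ itself, this promotes $\Lambda$ to a Hopf $\ast$-algebra morphism $\widehat{H^{cop}} \to Q_{aut}(B)$, so that $\Lambda(\widehat{H^{cop}})$ sits inside $Q_{aut}(B)$, making the centralizer $\mathrm{C}_{Q_{aut}(B)}(\Lambda(\widehat{H^{cop}}))$ meaningful.

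The key translation to carry out is: two Hopf $\ast$-algebra actions on $B$ commute pointwise if and only if their respective images in $Q_{aut}(B)$ commute as subalgebras. The easy direction (commutation of images implies commutation of actions) is immediate. The non-trivial direction uses the universal property: two commuting actions of $Q$ and $\widehat{H^{cop}}$ on $B$ jointly define an action of $Q \otimes \widehat{H^{cop}}$ (as a Hopf algebra with tensor structure) on $B$, which factors through a single Hopf $\ast$-algebra morphism into $Q_{aut}(B)$; since the tensorands commute in $Q \otimes \widehat{H^{cop}}$, their images in $Q_{aut}(B)$ commute as well. Under this equivalence, the Hopf $\ast$-subalgebras of $Q_{aut}(B)$ whose induced action commutes with $\Lambda$ are exactly the Hopf $\ast$-subalgebras contained in $\mathrm{C}_{Q_{aut}(B)}(\Lambda(\widehat{H^{cop}}))$; the universal one is, by Lemma \ref{centra} and the definition of the Hopf centralizer, precisely $\mathcal{H}\mathrm{C}_{Q_{aut}(B)}(\Lambda(\widehat{H^{cop}}))$.

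I expect the main obstacle to be the first step above, namely justifying that the preceding theorem's subspace-preservation condition is genuinely the pointwise commutation of the two actions rather than a formally weaker invariance statement. Once this is settled, the rest is a straightforward universal-property argument in the spirit of the paper's other identification theorems.
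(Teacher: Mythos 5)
Your overall route is the paper's: reduce to the characterization in Theorem \ref{qgalbanica}, translate ``acts on $B$ compatibly with $\Lambda$'' into membership in the Hopf centralizer, and conclude by Lemma \ref{centra} and universality (the paper does this by producing two intertwiners $\Phi$ and $\Psi$ and checking they are mutually inverse). The genuine gap is exactly the step you flag and then wave at: upgrading ``$q$ maps each subspace $\Lambda(\omega)(B)$ into itself'' to ``$q$ commutes pointwise with $\Lambda$.'' Your proposed mechanism --- ``applying the antipode to move between the image and its pre-image'' --- is not an argument. The paper's actual bridge has two ingredients, both absent from your sketch. First, one fixes a faithful $\Lambda$-invariant state $\phi$ on $B$ (this exists because $\Lambda$ is a $\ast$-action of a dual compact quantum group algebra on a finite-dimensional $\ast$-algebra); this realizes $\Lambda(\widehat{H^{cop}})$ as a finite-dimensional von Neumann algebra on $L^2(B,\phi)$, and the $\ast$-preservation of the $\mathrm{QGal}$-action turns the invariance $P_\omega q P_\omega = qP_\omega$ of each range $\Lambda(\omega)(B)$ into genuine commutation of $q$ with the range projection $P_\omega$ (invariant subspaces of a $\ast$-closed family are reducing). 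Second, one still has to pass from commuting with the $P_\omega$ to commuting with the operators $\Lambda(\omega)$ themselves; the paper does this by observing that the range projections linearly span the finite-dimensional von Neumann algebra $\Lambda(\widehat{H^{cop}})$. Without the invariant state and the von Neumann algebra structure, neither of these steps is available, and subspace preservation is a priori strictly weaker than pointwise commutation.

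Two smaller remarks. Your packaging of the converse translation --- commuting actions of $Q$ and $\widehat{H^{cop}}$ assemble into an action of the tensor product Hopf algebra, hence factor through $Q_{aut}(B)$ with commuting images --- is sound and slightly different from the paper, which instead writes down the lift explicitly, $q\cdot E(b\otimes a\rtimes h)=(1\otimes a\rtimes 1)\,q\cdot(b_0\tau(hS(b_1)))$, and invokes universality of $\mathrm{QGal}$; either works. Also note that the identification of $\Lambda(\widehat{H^{cop}})$ as sitting ``inside $Q_{aut}(B)$'' should be read through the dual/finite-dual picture of Theorem \ref{th:qfinsp} and Proposition \ref{existence}, as you correctly indicate.
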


\begin{proof}
By the discussion prior to Theorem \ref{qgalbanica}, any $q \in Q_{aut}(B)$ commuting with $\Lambda$, lifts to a linear map of $E(B\otimes 1 \rtimes H^{cop})$. Explicitly, $q \cdot (E(b\otimes a \rtimes h))=(1\otimes a \rtimes 1)q\cdot (b_0\tau(hS(b_1)))$. By universality, there exists a $\Phi : \mathcal{H}\mathrm{C}_{Q_{aut}(B)}(\Lambda(\widehat{H^{cop}})) \rightarrow \mathrm{QGal}(A \subset (B\otimes A \rtimes H^{cop})^{H^{cop}})$ that intertwines the actions.

\medskip
On the other hand, fix a faithful state $\phi$ on $B$ which is invariant under $\Lambda$. Such a $\phi$ exists because $\Lambda$ is a $\ast$-action of a dual compact quantum group algebra on a finite dimensional $\ast$-algebra. Then $\Lambda(\widehat{H^{cop}})$ is a finite dimensional von Neumann algebra acting on the finite dimensional Hilbert space $L^2(B,\phi)$, hence in particular generated by its range projections. By Theorem \ref{qgalbanica}, the $\ast$-action of $\mathrm{QGal}(A \subset (B\otimes A \rtimes H^{cop})^{H^{cop}})$ leaves the image of each $\Lambda(\omega)$ invariant, i.e., for each $q \in \mathrm{QGal}(A \subset (B\otimes A \rtimes H^{cop})^{H^{cop}})$, $P_{\omega}qP_{\omega}=qP_{\omega}$, where $P_{\omega}$ is the range projection of $\Lambda(\omega)$, $\omega \in \widehat{H^{cop}}$. Since the action of $\mathrm{QGal}(A \subset (B\otimes A \rtimes H^{cop})^{H^{cop}})$ is $\ast$-preserving, each $q$ commutes with the range projections $P_{\omega}$ of $\Lambda(\omega)$ for each $\omega \in \widehat{H^{cop}}$. But these range projections $P_{\omega}$ linearly span (the finite dimensional von Neumann algebra) $\Lambda(\widehat{H^{cop}})$, hence each $q$ commutes with the whole of $\Lambda(\widehat{H^{cop}})$. Thus there exists $\Psi : \mathrm{QGal}(A \subset (B\otimes A \rtimes H^{cop})^{H^{cop}}) \rightarrow \mathcal{H}\mathrm{C}_{Q_{aut}(B)}(\Lambda(\widehat{H^{cop}}))$, intertwining the actions. $\Phi \Psi=\mathrm{id}$ follows from universality. And finally, Theorem \ref{qgalbanica} and the discussion prior to it yield $\Psi \Phi=\mathrm{id}$, completing the proof.
\end{proof}

Taking $A$ to be $\mathcal{R}$ and $B$ to be $P$, we obtain the following.

\begin{thm}
The quantum Galois group $\mathrm{QGal}(\mathcal{R} \subset (P\otimes \mathcal{R} \rtimes H^{cop})^{H^{cop}})$ of the vertical subfactor associated to the commuting square \eqref{comm} is isomorphic to $\mathcal{H}\mathrm{C}_{Q_{aut}(P)}(\Lambda(\widehat{H^{cop}}))$. Here $H$ is the dense Hopf $*$-algebra inside the compact quantum group $G$.
\end{thm}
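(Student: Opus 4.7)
The plan is to apply the preceding Theorem \ref{explicitqgal} verbatim, with the substitution $A = \mathcal{R}$ and $B = P$, and to check that every hypothesis used in that theorem is met by the data of Banica's commuting square \eqref{comm}. There is essentially no new content to produce; what remains is to confirm the dictionary between the two setups.

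First I would match up the algebraic structures. The Hopf algebra $H$ is, as stated, the dense Hopf $*$-subalgebra of the compact quantum group $G$ of Kac type. Being of Kac type, $G$ has a tracial Haar state $\tau$ and involutive antipode, which are exactly the standing hypotheses imposed immediately before Theorem \ref{explicitqgal} (the compact quantum group algebra assumption together with $S^2 = \mathrm{id}$, used, for instance, in the lemma preceding Corollary \ref{banica q form}). The action of $\widehat{G}$ on $\mathcal{R}$ is, by assumption, outer, which is precisely the outerness hypothesis of Corollary \ref{banica q form} and hence of Theorem \ref{explicitqgal}. Finally, $P$ is a finite-dimensional right $H$-comodule algebra via the given $G$-action on $P$, which is the role played by $B$ in Theorem \ref{explicitqgal}. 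So the quantum Galois group
\begin{equation*}
  \mathrm{QGal}(\mathcal{R} \subset (P \otimes \mathcal{R} \rtimes H^{cop})^{H^{cop}})
\end{equation*}
is literally an instance of $\mathrm{QGal}(A \subset (B \otimes A \rtimes H^{cop})^{H^{cop}})$ for the above choices, and Theorem \ref{explicitqgal} immediately identifies it with $\mathcal{H}\mathrm{C}_{Q_{aut}(P)}(\Lambda(\widehat{H^{cop}}))$.

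The only point worth flagging is the algebraic-versus-analytic subtlety: even though $\mathcal{R}$ and $G$ are infinite-dimensional, the finite-dimensionality of $P$ allows us, as the paper explicitly notes at the start of Subsection \ref{subse: banicacomm}, to use the algebraic smash product $\mathcal{R} \rtimes H^{cop}$ rather than the von Neumann crossed product when forming the coaction invariants. This keeps the whole discussion in the algebraic framework in which Theorem \ref{explicitqgal} was proved, so no analytic completion or $C^*$-algebraic reformulation is required. Accordingly, I do not expect any genuine obstacle; the proof reduces to recording the above identifications and invoking Theorem \ref{explicitqgal}.
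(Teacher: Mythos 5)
Your proposal is correct and matches the paper exactly: the paper derives this theorem with no separate argument, simply by the phrase ``Taking $A$ to be $\mathcal{R}$ and $B$ to be $P$'' applied to Theorem \ref{explicitqgal}. Your additional verification of the hypotheses (Kac type giving the involutive antipode and Haar state, outerness of the $\widehat{G}$-action, and the finite-dimensionality of $P$ permitting the algebraic smash product) is exactly the dictionary the paper leaves implicit.
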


\begin{rem}
We note the following two special cases of Theorem \ref{explicitqgal}.
\begin{enumerate}
  \item Let $H$ be a finite dimensional Hopf $C^*$-algebra and take $B=H^{cop}$ in Theorem \ref{explicitqgal}. Then we recover the conclusion of Theorem 4.1.7.
  \item Let $H$ be $\mathrm{Rep}(SU(2))$, the algebra of representative functions on $SU(2)$, $A$ be the hyperfinite $\mathrm{II}_1$ factor and $B=\mathrm{End}(V(1))=M_2(\mathbb{C})$, $V(1)$ being the 2-dimensional irreducible representation of $SU(2)$. Then $\mathrm{QGal}(A \subset (B\otimes A \rtimes H^{cop})^{H^{cop}})$ is trivial.
\end{enumerate}
\end{rem}

Theorem \ref{explicitqgal} also raises a rather general question.

\begin{que}
When can one have a version of double centralizer theorem for $\mathcal{H}\mathrm{C}_{Q}(S)$?
\end{que}

\section{Epilogue}\label{section-epilogue}
In this section, we describe some further directions that we intend to pursue in the future.

\subsection*{depth-2, reducible inclusions}
We have treated examples coming from depth-2, irreducible inclusions and as we have already mentioned, these are nothing but crossed products by finite dimensional Hopf $C^*$-algebras, see \cite{szymcrossed}. However, in the reducible situation, Hopf algebras don't suffice and \cite{nvcharacteri} have shown that one needs weak Hopf algebras. For details on weak Hopf algebras and their relation to subfactor theory, see also \cites{bnsweak,nikshychkac,nsw,nvgalois,nvsurvey}. We are trying to imitate the computation in Subsection \ref{subseccrossed} when the inclusion arises from a weak Hopf crossed product.

\begin{remu}
On the other hand, the computation of the quantum Galois groups of exotic subfactors such as the Haagerup subfactor as well as infinite depth, discrete subfactors would be very interesting which we plan to do in the future. We are very grateful to the referee for pointing this out to us.
\end{remu}

\subsection*{Intermediate factors and Galois correspondence}
Any theory regarding the notion of a ``Galois group'' should also provide a Galois correspondence, i.e., a (possibly bijective) match between intermediate subalgebras and ``subgroups'' of the ``Galois group''. Indeed, for Kac algebras a similar correspondence was obtained in \cite{izumilongopopa}, while \cite{nvgalois} treats weak Hopf algebras (see also \cite[Corollary E]{jp1} for a result in the same spirit). Our examples show that such a correspondence holds here as well. We are looking at the general situation presently.

According to \cite{nvgalois}, any finite depth subfactor may be realized as an intermediate subfactor of a depth-2 subfactor and using the correspondence, as a crossed product by a coideal subalgebra of a weak Hopf algebra. Thus understanding the weak Hopf crossed product is essential and it may lead to a more direct proof of our existence result which relies on \cite{wangqsymm}.

\subsection*{Some remarks on a categorical construction}
We recall that for a pair of finite factors $N\subset M$ with $[M:N] < \infty$, there exists a canonical tensor category $\mathrm{Bimod}_{N-N}(N\subset M)$ (\cites{ocneanu,nvgalois}) which is generated by the simple subobjects of the bimodules $M_n$, $n \geq -1$, $M_{-1}=N$, $M_0=M$, i.e., the objects are finite direct sums of simple objects of $M^{\tens n}$. The forgetful functor $\mathrm{for} : \mathrm{Bimod}_{N-N}(N\subset M) \to \mathrm{Bimod}_{N-N}$ is then a fibre functor and the pair seems to satisfy the requirements of \cite{tannakaalgebrd} for a reconstruction, yielding a Hopf algebroid over $N$, in the sense \cite{schauenberg}, see also \cite{bsalgbrd}. However, this Hopf algebroid does not keep $N$ fixed for the following simple reason, as observed in \cite{kadison}, page 83: the fixed point subalgebra commutes with $N$. Nevertheless, it would be interesting to see what the reconstruction of \cite{tannakaalgebrd} actually yields. 

\begin{remu}
  We also remark that the weak Hopf algebra, say $H$, obtained in \cite{nvgalois} moreover satisfies $\mathrm{Rep}(H^*)\isom \mathrm{Bimod}_{N-N}(N\subset M)$ as tensor categories.
\end{remu}



\subsection*{Quantum Galois theory in the weak context}
The developments in \cites{nvgalois,nvcharacteri,nvsurvey,nsw} already indicate a need to undertake an investigation of universal symmetries in the weak context. The recent preprint \cite{chelsea} initiates such a program in the algebraic world. We hope a similar undertaking in the analytic setting would be beneficial in understanding quantum symmetries.


\begin{bibdiv}
\begin{biblist}

\bib{agv}{misc}{
      author={Agore, Ana},
      author={Gordienko, Alexey},
      author={Vercruysse, Joost},
       title={$v$-universal hopf algebras (co)acting on $\omega$-algebras},
        date={2020},
        note={arXiv:2005.12954},
}

\bib{ar}{book}{
      author={Ad\'{a}mek, Ji\v{r}\'{\i}},
      author={Rosick\'{y}, Ji\v{r}\'{\i}},
       title={Locally presentable and accessible categories},
      series={London Mathematical Society Lecture Note Series},
   publisher={Cambridge University Press, Cambridge},
        date={1994},
      volume={189},
        ISBN={0-521-42261-2},
         url={https://doi-org.gate.lib.buffalo.edu/10.1017/CBO9780511600579},
      review={\MR{1294136}},
}

\bib{ash}{article}{
      author={Abella, Andr{\'{e}}s},
      author={Santos, Walter~Ferrer},
      author={Haim, Mariana},
       title={Compact coalgebras, compact quantum groups and the positive
  antipode},
        date={2009},
        ISSN={1982-6907},
     journal={S{\~{a}}o Paulo J. Math. Sci.},
      volume={3},
      number={2},
       pages={193\ndash 229},
         url={https://doi-org/10.11606/issn.2316-9028.v3i2p193-229},
      review={\MR{2604652}},
}

\bib{banicacommsquare}{article}{
      author={Banica, Teodor},
       title={Compact {K}ac algebras and commuting squares},
        date={2000},
        ISSN={0022-1236},
     journal={J. Funct. Anal.},
      volume={176},
      number={1},
       pages={80\ndash 99},
         url={https://doi.org/10.1006/jfan.2000.3606},
      review={\MR{1781632}},
}

\bib{banicakac}{article}{
      author={Banica, Teodor},
       title={Subfactors associated to compact {K}ac algebras},
        date={2001},
        ISSN={0378-620X},
     journal={Integral Equations Operator Theory},
      volume={39},
      number={1},
       pages={1\ndash 14},
         url={https://doi.org/10.1007/BF01192147},
      review={\MR{1806841}},
}

\bib{hopftensor}{article}{
      author={Bischoff, M.},
      author={Davydov, A.},
       title={Hopf algebra actions in tensor categories},
        date={2021},
        ISSN={1083-4362},
     journal={Transform. Groups},
      volume={26},
      number={1},
       pages={69\ndash 80},
         url={https://doi.org/10.1007/s00031-020-09560-w},
      review={\MR{4229659}},
}

\bib{bnsweak}{article}{
      author={B\"{o}hm, Gabriella},
      author={Nill, Florian},
      author={Szlach\'{a}nyi, Korn\'{e}l},
       title={Weak {H}opf algebras. {I}. {I}ntegral theory and
  {$C^*$}-structure},
        date={1999},
        ISSN={0021-8693},
     journal={J. Algebra},
      volume={221},
      number={2},
       pages={385\ndash 438},
         url={https://doi.org/10.1006/jabr.1999.7984},
      review={\MR{1726707}},
}

\bib{bsalgbrd}{article}{
      author={B\"{o}hm, Gabriella},
      author={Szlach\'{a}nyi, Korn\'{e}l},
       title={Hopf algebroids with bijective antipodes: axioms, integrals, and
  duals},
        date={2004},
        ISSN={0021-8693},
     journal={J. Algebra},
      volume={274},
      number={2},
       pages={708\ndash 750},
         url={https://doi.org/10.1016/j.jalgebra.2003.09.005},
      review={\MR{2043373}},
}

\bib{chi-fou}{article}{
      author={Chirvasitu, Alexandru},
       title={Relative {F}ourier transforms and expectations on coideal
  subalgebras},
        date={2018},
        ISSN={0021-8693},
     journal={J. Algebra},
      volume={516},
       pages={271\ndash 297},
         url={https://doi-org/10.1016/j.jalgebra.2018.08.033},
      review={\MR{3863479}},
}

\bib{dt}{article}{
      author={Doi, Yukio},
      author={Takeuchi, Mitsuhiro},
       title={Cleft comodule algebras for a bialgebra},
        date={1986},
        ISSN={0092-7872},
     journal={Comm. Algebra},
      volume={14},
      number={5},
       pages={801\ndash 817},
         url={https://doi-org.gate.lib.buffalo.edu/10.1080/00927878608823337},
      review={\MR{834465}},
}

\bib{ekasymp}{article}{
      author={Evans, David~E.},
      author={Kawahigashi, Yasuyuki},
       title={On {O}cneanu's theory of asymptotic inclusions for subfactors,
  topological quantum field theories and quantum doubles},
        date={1995},
        ISSN={0129-167X},
     journal={Internat. J. Math.},
      volume={6},
      number={2},
       pages={205\ndash 228},
         url={https://doi.org/10.1142/S0129167X95000468},
      review={\MR{1316301}},
}

\bib{ekquantsymm}{book}{
      author={Evans, David~E.},
      author={Kawahigashi, Yasuyuki},
       title={Quantum symmetries on operator algebras},
      series={Oxford Mathematical Monographs},
   publisher={The Clarendon Press, Oxford University Press, New York},
        date={1998},
        ISBN={0-19-851175-2},
        note={Oxford Science Publications},
      review={\MR{1642584}},
}

\bib{ghj}{book}{
      author={Goodman, Frederick~M.},
      author={de~la Harpe, Pierre},
      author={Jones, Vaughan F.~R.},
       title={Coxeter graphs and towers of algebras},
      series={Mathematical Sciences Research Institute Publications},
   publisher={Springer-Verlag, New York},
        date={1989},
      volume={14},
        ISBN={0-387-96979-9},
         url={https://doi.org/10.1007/978-1-4613-9641-3},
      review={\MR{999799}},
}

\bib{tannakaalgebrd}{article}{
      author={Hai, Ph\`ung~H$\^{o}$},
       title={Tannaka-{K}rein duality for {H}opf algebroids},
        date={2008},
        ISSN={0021-2172},
     journal={Israel J. Math.},
      volume={167},
       pages={193\ndash 225},
         url={https://doi.org/10.1007/s11856-008-1047-5},
      review={\MR{2448024}},
}

\bib{hayashiface}{article}{
      author={Hayashi, Takahiro},
       title={Galois quantum groups of {${\rm II}_1$}-subfactors},
        date={1999},
        ISSN={0040-8735},
     journal={Tohoku Math. J. (2)},
      volume={51},
      number={3},
       pages={365\ndash 389},
         url={https://doi.org/10.2748/tmj/1178224768},
      review={\MR{1707762}},
}

\bib{chelsea}{article}{
      author={{Huang}, Hongdi},
      author={{Walton}, Chelsea},
      author={{Wicks}, Elizabeth},
      author={{Won}, Robert},
       title={{Universal quantum semigroupoids}},
        date={2020-08},
     journal={arXiv e-prints},
       pages={https://arxiv.org/abs/2008.00606},
}

\bib{izumilongopopa}{article}{
      author={Izumi, Masaki},
      author={Longo, Roberto},
      author={Popa, Sorin},
       title={A {G}alois correspondence for compact groups of automorphisms of
  von {N}eumann algebras with a generalization to {K}ac algebras},
        date={1998},
        ISSN={0022-1236},
     journal={J. Funct. Anal.},
      volume={155},
      number={1},
       pages={25\ndash 63},
         url={https://doi.org/10.1006/jfan.1997.3228},
      review={\MR{1622812}},
}

\bib{j-knts2}{article}{
      author={Jones, Vaughan F.~R.},
       title={Knots, groups, subfactors and physics},
        date={2016},
        ISSN={0289-2316},
     journal={Jpn. J. Math.},
      volume={11},
      number={1},
       pages={69\ndash 111},
         url={https://doi-org.gate.lib.buffalo.edu/10.1007/s11537-016-1529-x},
      review={\MR{3510680}},
}

\bib{jonesindex}{article}{
      author={Jones, V. F.~R.},
       title={Index for subfactors},
        date={1983},
        ISSN={0020-9910},
     journal={Invent. Math.},
      volume={72},
      number={1},
       pages={1\ndash 25},
         url={https://doi.org/10.1007/BF01389127},
      review={\MR{696688}},
}

\bib{j-knts1}{book}{
      author={Jones, Vaughan F.~R.},
       title={Subfactors and knots},
      series={CBMS Regional Conference Series in Mathematics},
   publisher={Published for the Conference Board of the Mathematical Sciences,
  Washington, DC; by the American Mathematical Society, Providence, RI},
        date={1991},
      volume={80},
        ISBN={0-8218-0729-3},
         url={https://doi-org.gate.lib.buffalo.edu/10.1090/cbms/080},
      review={\MR{1134131}},
}

\bib{jp0}{article}{
      author={Jones, Corey},
      author={Penneys, David},
       title={Operator algebras in rigid {$\rm C^*$}-tensor categories},
        date={2017},
        ISSN={0010-3616},
     journal={Comm. Math. Phys.},
      volume={355},
      number={3},
       pages={1121\ndash 1188},
         url={https://doi-org.gate.lib.buffalo.edu/10.1007/s00220-017-2964-0},
      review={\MR{3687214}},
}

\bib{jp1}{article}{
      author={Jones, Corey},
      author={Penneys, David},
       title={Realizations of algebra objects and discrete subfactors},
        date={2019},
        ISSN={0001-8708},
     journal={Adv. Math.},
      volume={350},
       pages={588\ndash 661},
         url={https://doi-org.gate.lib.buffalo.edu/10.1016/j.aim.2019.04.039},
      review={\MR{3948170}},
}

\bib{kawaflat}{article}{
      author={Kawahigashi, Yasuyuki},
       title={On flatness of {O}cneanu's connections on the {D}ynkin diagrams
  and classification of subfactors},
        date={1995},
        ISSN={0022-1236},
     journal={J. Funct. Anal.},
      volume={127},
      number={1},
       pages={63\ndash 107},
         url={https://doi.org/10.1006/jfan.1995.1003},
      review={\MR{1308617}},
}

\bib{kawaquantgal}{article}{
      author={Kawahigashi, Yasuyuki},
       title={Quantum {G}alois correspondence for subfactors},
        date={1999},
        ISSN={0022-1236},
     journal={J. Funct. Anal.},
      volume={167},
      number={2},
       pages={481\ndash 497},
         url={https://doi.org/10.1006/jfan.1999.3453},
      review={\MR{1716205}},
}

\bib{kel}{article}{
      author={Kelly, G.~M.},
       title={Basic concepts of enriched category theory},
        date={2005},
     journal={Repr. Theory Appl. Categ.},
      number={10},
       pages={vi+137},
        note={Reprint of the 1982 original [Cambridge Univ. Press, Cambridge;
  MR0651714]},
      review={\MR{2177301}},
}

\bib{kadison}{article}{
      author={Kadison, Lars},
      author={Szlach\'{a}nyi, Korn\'{e}l},
       title={Bialgebroid actions on depth two extensions and duality},
        date={2003},
        ISSN={0001-8708},
     journal={Adv. Math.},
      volume={179},
      number={1},
       pages={75\ndash 121},
         url={https://doi.org/10.1016/S0001-8708(02)00028-2},
      review={\MR{2004729}},
}

\bib{kodakacrossed}{article}{
      author={Kodaka, Kazunori},
      author={Teruya, Tamotsu},
       title={Inclusions of unital {$C^*$}-algebras of index-finite type with
  depth 2 induced by saturated actions of finite dimensional {$C^*$}-{H}opf
  algebras},
        date={2009},
        ISSN={0025-5521},
     journal={Math. Scand.},
      volume={104},
      number={2},
       pages={221\ndash 248},
         url={https://doi.org/10.7146/math.scand.a-15096},
      review={\MR{2542652}},
}

\bib{liu}{article}{
      author={Liu, Zhengwei},
       title={Exchange relation planar algebras of small rank},
        date={2016},
        ISSN={0002-9947},
     journal={Trans. Amer. Math. Soc.},
      volume={368},
      number={12},
       pages={8303\ndash 8348},
         url={https://doi.org/10.1090/tran/6582},
      review={\MR{3551573}},
}

\bib{maj-qg}{book}{
      author={Majid, Shahn},
       title={Foundations of quantum group theory},
   publisher={Cambridge University Press, Cambridge},
        date={1995},
        ISBN={0-521-46032-8},
         url={https://doi-org.gate.lib.buffalo.edu/10.1017/CBO9780511613104},
      review={\MR{1381692}},
}

\bib{macl}{book}{
      author={Mac~Lane, Saunders},
       title={Categories for the working mathematician},
     edition={Second},
      series={Graduate Texts in Mathematics},
   publisher={Springer-Verlag, New York},
        date={1998},
      volume={5},
        ISBN={0-387-98403-8},
      review={\MR{1712872}},
}

\bib{montgomery}{book}{
      author={Montgomery, Susan},
       title={Hopf algebras and their actions on rings},
      series={CBMS Regional Conference Series in Mathematics},
   publisher={Published for the Conference Board of the Mathematical Sciences,
  Washington, DC; by the American Mathematical Society, Providence, RI},
        date={1993},
      volume={82},
        ISBN={0-8218-0738-2},
         url={https://doi.org/10.1090/cbms/082},
      review={\MR{1243637}},
}

\bib{nikshychkac}{article}{
      author={Nikshych, Dmitri},
       title={Duality for actions of weak {K}ac algebras and crossed product
  inclusions of {$\rm II_1$} factors},
        date={2001},
        ISSN={0379-4024},
     journal={J. Operator Theory},
      volume={46},
      number={3, suppl.},
       pages={635\ndash 655},
      review={\MR{1897159}},
}

\bib{nsw}{article}{
      author={{Nill}, Florian},
      author={{Szlachanyi}, Kornel},
      author={{Wiesbrock}, Hans-Werner},
       title={{Weak Hopf Algebras and Reducible Jones Inclusions of Depth 2. I:
  From Crossed products to Jones towers}},
        date={1998-06},
     journal={arXiv Mathematics e-prints},
       pages={math/9806130},
      eprint={math/9806130},
}

\bib{nvcharacteri}{article}{
      author={Nikshych, Dmitri},
      author={Vainerman, Leonid},
       title={A characterization of depth 2 subfactors of {${\rm II}_1$}
  factors},
        date={2000},
        ISSN={0022-1236},
     journal={J. Funct. Anal.},
      volume={171},
      number={2},
       pages={278\ndash 307},
         url={https://doi.org/10.1006/jfan.1999.3522},
      review={\MR{1745634}},
}

\bib{nvgalois}{article}{
      author={Nikshych, Dmitri},
      author={Vainerman, Leonid},
       title={A {G}alois correspondence for {${\rm II}_1$} factors and quantum
  groupoids},
        date={2000},
        ISSN={0022-1236},
     journal={J. Funct. Anal.},
      volume={178},
      number={1},
       pages={113\ndash 142},
         url={https://doi.org/10.1006/jfan.2000.3650},
      review={\MR{1800792}},
}

\bib{nvsurvey}{incollection}{
      author={Nikshych, Dmitri},
      author={Vainerman, Leonid},
       title={Finite quantum groupoids and their applications},
        date={2002},
   booktitle={New directions in {H}opf algebras},
      series={Math. Sci. Res. Inst. Publ.},
      volume={43},
   publisher={Cambridge Univ. Press, Cambridge},
       pages={211\ndash 262},
      review={\MR{1913440}},
}

\bib{ocneanu}{incollection}{
      author={Ocneanu, Adrian},
       title={Quantized groups, string algebras and {G}alois theory for
  algebras},
        date={1988},
   booktitle={Operator algebras and applications, {V}ol. 2},
      series={London Math. Soc. Lecture Note Ser.},
      volume={136},
   publisher={Cambridge Univ. Press, Cambridge},
       pages={119\ndash 172},
      review={\MR{996454}},
}

\bib{prst-lim}{article}{
      author={Porst, Hans-E.},
       title={Limits and colimits of {H}opf algebras},
        date={2011},
        ISSN={0021-8693},
     journal={J. Algebra},
      volume={328},
       pages={254\ndash 267},
         url={https://doi-org/10.1016/j.jalgebra.2010.10.014},
      review={\MR{2745565}},
}

\bib{prst-ft1}{article}{
      author={Porst, Hans-E.},
       title={The formal theory of {H}opf algebras {P}art {I}: {H}opf monoids
  in a monoidal category},
        date={2015},
        ISSN={1607-3606},
     journal={Quaest. Math.},
      volume={38},
      number={5},
       pages={631\ndash 682},
  url={https://doi-org.gate.lib.buffalo.edu/10.2989/16073606.2014.981736},
      review={\MR{3420664}},
}

\bib{pimsnerpopa}{article}{
      author={Pimsner, Mihai},
      author={Popa, Sorin},
       title={Entropy and index for subfactors},
        date={1986},
        ISSN={0012-9593},
     journal={Ann. Sci. \'{E}cole Norm. Sup. (4)},
      volume={19},
      number={1},
       pages={57\ndash 106},
         url={http://www.numdam.org/item?id=ASENS_1986_4_19_1_57_0},
      review={\MR{860811}},
}

\bib{rad-bk}{book}{
      author={Radford, David~E.},
       title={Hopf algebras},
      series={Series on Knots and Everything},
   publisher={World Scientific Publishing Co. Pte. Ltd., Hackensack, NJ},
        date={2012},
      volume={49},
        ISBN={978-981-4335-99-7; 981-4335-99-1},
      review={\MR{2894855}},
}

\bib{schauenberg}{incollection}{
      author={Schauenburg, Peter},
       title={Duals and doubles of quantum groupoids ({$\times_R$}-{H}opf
  algebras)},
        date={2000},
   booktitle={New trends in {H}opf algebra theory ({L}a {F}alda, 1999)},
      series={Contemp. Math.},
      volume={267},
   publisher={Amer. Math. Soc., Providence, RI},
       pages={273\ndash 299},
         url={https://doi.org/10.1090/conm/267/04276},
      review={\MR{1800718}},
}

\bib{szympeli}{article}{
      author={Szyma\'{n}ski, W.},
      author={Peligrad, C.},
       title={Saturated actions of finite-dimensional {H}opf {$*$}-algebras on
  {$C^*$}-algebras},
        date={1994},
        ISSN={0025-5521},
     journal={Math. Scand.},
      volume={75},
      number={2},
       pages={217\ndash 239},
         url={https://doi.org/10.7146/math.scand.a-12516},
      review={\MR{1319732}},
}

\bib{swe}{book}{
      author={Sweedler, Moss~E.},
       title={Hopf algebras},
      series={Mathematics Lecture Note Series},
   publisher={W. A. Benjamin, Inc., New York},
        date={1969},
      review={\MR{0252485}},
}

\bib{szymcrossed}{article}{
      author={Szyma\'{n}ski, Wojciech},
       title={Finite index subfactors and {H}opf algebra crossed products},
        date={1994},
        ISSN={0002-9939},
     journal={Proc. Amer. Math. Soc.},
      volume={120},
      number={2},
       pages={519\ndash 528},
         url={https://doi.org/10.2307/2159890},
      review={\MR{1186139}},
}

\bib{takesaki3}{book}{
      author={Takesaki, M.},
       title={Theory of operator algebras. {III}},
      series={Encyclopaedia of Mathematical Sciences},
   publisher={Springer-Verlag, Berlin},
        date={2003},
      volume={127},
        ISBN={3-540-42913-1},
         url={https://doi.org/10.1007/978-3-662-10453-8},
        note={Operator Algebras and Non-commutative Geometry, 8},
      review={\MR{1943007}},
}

\bib{vd-dual}{article}{
      author={Van~Daele, A.},
       title={Dual pairs of {H}opf {$*$}-algebras},
        date={1993},
        ISSN={0024-6093},
     journal={Bull. London Math. Soc.},
      volume={25},
      number={3},
       pages={209\ndash 230},
         url={https://doi-org.gate.lib.buffalo.edu/10.1112/blms/25.3.209},
      review={\MR{1209245}},
}

\bib{wangqsymm}{article}{
      author={Wang, Shuzhou},
       title={Quantum symmetry groups of finite spaces},
        date={1998},
        ISSN={0010-3616},
     journal={Comm. Math. Phys.},
      volume={195},
      number={1},
       pages={195\ndash 211},
         url={https://doi.org/10.1007/s002200050385},
      review={\MR{1637425}},
}

\end{biblist}
\end{bibdiv}

\end{document}